\newcommand{\squad}{\hspace{0.1em} }
\newtheorem{theorem}{Theorem}
\newtheorem{lemma}{Lemma}
\newtheorem{remark}{Remark}
\definecolor{black}{rgb}{0,0,0}
\definecolor{red}{rgb}{1,0,0}
\definecolor{blue}{rgb}{0,0,1}
\title{}
\title{\textbf{}}
\title{\textbf{Generalized multiscale finite element method for highly heterogeneous compressible flow}}
\author{Shubin Fu\thanks{Department of Mathematics, University of Wisconsin-Madison, WI, USA. }, \quad
	Eric Chung\thanks{Department of Mathematics, The Chinese University of Hong Kong, Shatin, Hong Kong SAR.}, \quad and\quad
	Lina Zhao\thanks{Department of Mathematics, City University of Hong Kong, Kowloon Tong, Hong Kong SAR. Corresponding author (linazha@cityu.edu.hk).} \;
	
}
\begin{document}
	\maketitle
	\begin{abstract}
	In this paper, we study the generalized multiscale finite element method (GMsFEM) for single phase compressible flow in highly heterogeneous porous media. We follow the major steps of the GMsFEM to construct permeability
	dependent offline basis for fast coarse-grid simulation. The offline coarse space is efficiently constructed only once based on the initial permeability field with parallel computing. A rigorous convergence analysis is performed for two types of snapshot spaces. The analysis indicates that the convergence rates of the proposed multiscale method depend on the coarse meshsize and the eigenvalue decay of the local spectral problem.
	To further increase the accuracy of multiscale method, residual driven online multiscale basis is added to the offline space. The construction of online multiscale basis is based on a carefully design error indicator motivated by the analysis.
	We find that online basis is particularly important for the singular source.
	Rich numerical tests on typical 3D highly heterogeneous medias are
	presented to demonstrate the impressive computational  advantages of the
	proposed multiscale method.
	
\end{abstract}

\textbf{Keywords:} GMsFEM; compressible flow; highly heterogeneous; spectral problem; residual driven online multiscale basis.

\pagestyle{myheadings} \thispagestyle{plain} \markboth{Lina}
    {GMsFEM for highly heterogeneous compressible flow}

\section{Introduction}
Fluid modeling through heterogeneous porous media is important in  applications
such as reservoir simulation, nuclear water storage and
underground water contamination. These problems can be very challenging due to
the strong heterogeneities of the geological data. In flow simulation based inverse problems such as history matching,  one
needs to repeatedly solve the forward problems which makes the
full scale simulations almost impossible and motivates  intensive research on
model reduction techniques. There are two types of model reduction approaches, one is upscaling \cite{wu2002analysis,durlofsky1991numerical,arbogast2013ms}, in which
the upscaled geological properties such as permeability fields are
obtained based on some rules, therefore one can solve the problem with a much reduced model. Another direction is multiscale method \cite{hou1997multiscale,efendiev2013generalized,chung2015mixed,maalqvist2014localization,chen2003mixed,araya2013multiscale,jenny2003multi,weinan2007heterogeneous,ArPeWY07,aarnes04,jenny2003multi,hajibeygi2009multiscale,chen2020random},
in which one aims to solve the problem in  coarse grid with carefully constructed
multiscale basis functions. Notable multiscale methods include the multiscale method  (MsFEM) \cite{hou1997multiscale,chen2003mixed}, the generalized multiscale finite element method (GMsFEM) \cite{efendiev2013generalized,chung2015mixed},
the multiscale finite volume method (MsFVM) \cite{hajibeygi2009multiscale,jenny2003multi,efendiev2006accurate},
the heterogeneous multiscale method (HMM) \cite{weinan2007heterogeneous}, the
variational multiscale method (VMS) \cite{hughes98},  the multiscale mortar mixed finite
element method (MMMFEM) \cite{ArPeWY07}, the localized orthogonal method (LOD) \cite{maalqvist2014localization}, the multiscale hybrid-mixed method (MHM) \cite{araya2013multiscale} and recently proposed multiscale methods with randomized sampling \cite{chen2020random}. All these multiscale methods generate reasonably satisfactory numerical results in certain applications.

Among these multiscale methods, the MsFEM and its extension GMsFEM have achieved huge success in many practical applications especially fluid simulation arisen from   reservoir simulation.  Pioneer work in the MsFEM can be traced back to \cite{babuvska1994special}, in which the authors used special basis functions to replace the polynomials for second order elliptic problems with rough coefficients.
The idea was then extended to high-dimensional case in \cite{hou1997multiscale} and induced
a series of follow-up work. In \cite{chen2003mixed}, the mixed multiscale finite element method was developed and successfully applied for incompressible two-phase flow simulations,  which lead to vast research  on multiscale methods \cite{hajibeygi2009multiscale,chung2015mixed,jenny2003multi,wheeler2011multiscale,ArPeWY07,aarnes04,fu2019local,rocha2020multiscale,lie2019introduction} for reservoir
simulation. The key idea of the MsFEM is to construct multiscale basis functions via solving local problems with appropriate boundary conditions, these multiscale basis
functions contain important local media information and thus yield accurate
coarse-grid solution. However, the MsFEM fails to handle arbitrarily complicated media, which motivates the development of the GMsFEM \cite{efendiev2013generalized,chung2016adaptive}.
Carefully designed spectral problems are exploited to construct the multiscale basis in GMsFEM, therefore multiple basis functions are allowed in GMsFEM and thus the accuracy of the GMsFEM solution can be tuned and controlled. Another major highlight of the GMsFEM is its ability to cope with any types of heterogeneous media. To further improve the
performances of  GMsFEM, residual driven online multiscale basis functions were proposed
in \cite{online_cg}, these multiscale basis contains local and global media information and source information, which significantly facilitate  convergence of the
multiscale method. It is observed for time dependent problems or nonlinear problems, one can reuse the residual driven multiscale basis functions computed at certain time step or iteration
\cite{chung2017residual,fu2020constraint,fu2019generalized,wang2021local,wang2021comparison}, which  tremendously increase the accuracy of the GMsFEM solution compared with only using
equal numbers of offline basis functions.

We adopt the basic ideas of the offline and online GMsFEM for the
single phase nonlinear compressible flow arisen from reservoir simulation in this article. Most of the existing works in the context of GMsFEM are focused on
the incompressible flow, see e.g. \cite{chung2015mixed,wang2021comparison,wang2020online}.
Using other types of multiscale methods for the compressible flow can be found in \cite{KimPark07,hajibeygi2009multiscale,arraras2019multipoint,lie2012mixed,pal2015validation,Arshad21}, but there is few evidence that these methods (most of which are the variants of MsFEM) can deal with arbitrarily complicated porous media. Therefore it is necessary to systemically study the GMsFEM for the nonlinear compressible flow in  high-contrast media. We follow the major steps of the offline GMsFEM and online GMsFEM method.
In particular, we construct the permeability dependent offline multiscale
basis functions by solving local spectral problems with initial permeability field.
Although the single phase compressible flow is a time dependent problem and the
permeability field changes at different time instants, the multiscale space will keep fixed as time marches, which is a typical strategy in flow simulations \cite{chung2015mixed,aarnes04}. As a result, the CPU time for the offline stage can be neglected especially  the parallel computing can be employed without too much difficulty for solving the independent local problems. To boost the performance of the coarse-grid simulation especially when the source term is singular, which is often the case in practice, the residual driven online multiscale basis are incorporated here. We compute the online basis with the residual at the initial time step and selectively update them at later time steps to balance the accuracy and computational cost. The convergence of the semi-discrete formulation based on two types of snapshot spaces is rigorously analyzed. More specifically, we first bound the error between the fine scale solution and the coarse scale solution by the difference between the fine scale solution and its corresponding projection to the coarse grid. Then we analyze the error between the fine scale solution and its corresponding projection. To guide the construction of online basis functions, we also prove the a posterior error estimates. It is worth mentioning that a rigorous convergence error estimates for GMsFEM with applications to nonlinear problem is rarely seen in the existing literature and the proposed analysis and algorithm will definitively inspire more works in this direction.

Extensive numerical experiments  are provided to show the superior computational
performances of the proposed method in terms of CPU time. In particular, we consider
various  3D highly heterogeneous permeability fields with two types of boundary conditions and source settings. We are particularly interested in investigating the influence of adding offline and online multiscale basis on the accuracy of the multiscale solution. We report detailed CPU time for both the multiscale simulation and fine grid simulation to quantify reduction of the computational cost of the GMsFEM. It is shown that adding online basis is more effective than adding equal number of offline basis in reducing the error of GMsFEM solution especially if the source is singular. Besides, updating online basis can accelerate the convergence of the GMsFEM. The Newton's method is carried out  to handle the nonlinear term
and it is shown only a small number of iterations are needed in each time marching step.

The rest of the paper is organized as follows. In the next section, we introduce the
single phase compressible flow model with some preliminary results. The construction of the offline multiscale space and resulting GMsFEM algorithm are presented in Section \ref{sec:offline} and the corresponding convergence is
shown in Section \ref{sec:convergence}. We then introduction the residual driven basis and related analysis in Section \ref{sec:online}.
Numerical experiments are presented in Section \ref{sec:na}. We conclude the paper in Section \ref{sec:conclusion}.

	\section{Preliminaries.}\label{sec:pre}
	We consider the following single-phase nonlinear compressible flow \cite{ctene2015adaptive} through a
	porous medium:
	\begin{equation}\label{eq:model}
	\begin{aligned}
	\frac{\partial (\phi\rho)}{\partial t}-\nabla\cdot\big(\frac{\kappa}{\mu}\rho
\nabla p\big)=&q \quad \text{in}\squad D\times (0,T],\\
	\frac{\kappa}{\mu}\rho
	\nabla p\cdot n=&0 \quad \text{on}\squad \partial D^{n}\times(0,T],\\
	p=&p^d \quad \text{on}\squad \partial D^{d}\times(0,T],\\
	p=&p_0 \quad \text{on}\squad D\times\{t=0\}.
	\end{aligned}
	\end{equation}
	Here, $p$ is the fluid pressure that we aim to seek,
	$\mu$ is the constant fluid viscosity, $\phi$ is the porosity which is assumed to be a constant in our presentation.
	$\kappa$ is the permeability field that may be highly heterogeneous. $D$ is the computational domain, $\partial D=\partial D^n\cup \partial D^d$,
	$n$ is the outward unit-normal vector on $\partial D$.
	 The fluid density $\rho$ is a function of
fluid pressure $p$ as
\begin{equation}
	\rho(p)=\rho_{\text{ref}}	e^{c(p-p_{\text{ref}})},
\end{equation}
where $\rho_{\text{ref}}$ is the given reference density and  $p_{\text{ref}}$
is the reference pressure.

In the GMsFEM considered in this paper, multiscale basis functions will be
constructed for the pressure $p$. For later use, we first introduce the
notion of the two-scale mesh.
We divide the computational domain $D$ into some regular coarse blocks
and denote the resulting triangulation as $\mathcal{T}^H$. We use $H$ to represent the diameter of the coarse block $K\in \mathcal{T}_H$. Each coarse block will be
further divided into a connected union of fine-grid blocks which are conforming
across coarse-grid edges. We denote this fine-grid partition as $\mathcal{T}^h$, which is a refinement of $\mathcal{T}^H$ by definition.
For each vertex $ {x}_i \in \mathcal{S}^H$ in the grid  $\mathcal{T}^H$,  the coarse neighborhood $\omega_i$ is defined by
\begin{equation*}
\omega_i = \bigcup \{ K_j \; : \; K_j \subset \mathcal{T}^H, \;  {x}_i \in K_j \}.
\end{equation*}
That is, $\omega_i$ is the union of all coarse grid blocks $K_j$
containing the vertex $ {x}_i$, see Figure \ref{fig:grid}. The  multiscale basis functions are constructed  in each coarse neighborhood $\omega_i$. Throughout the paper, $a\preceq b$ means there exists a positive constant $C$ independent of the meshsize such that $a\leq C b$. In addition, $(\cdot,\cdot)$ stands for the standard $L^2$ inner product defined on the domain $D$.
\begin{figure}[h]
	\centering
	\includegraphics[width=3in,height=2in]{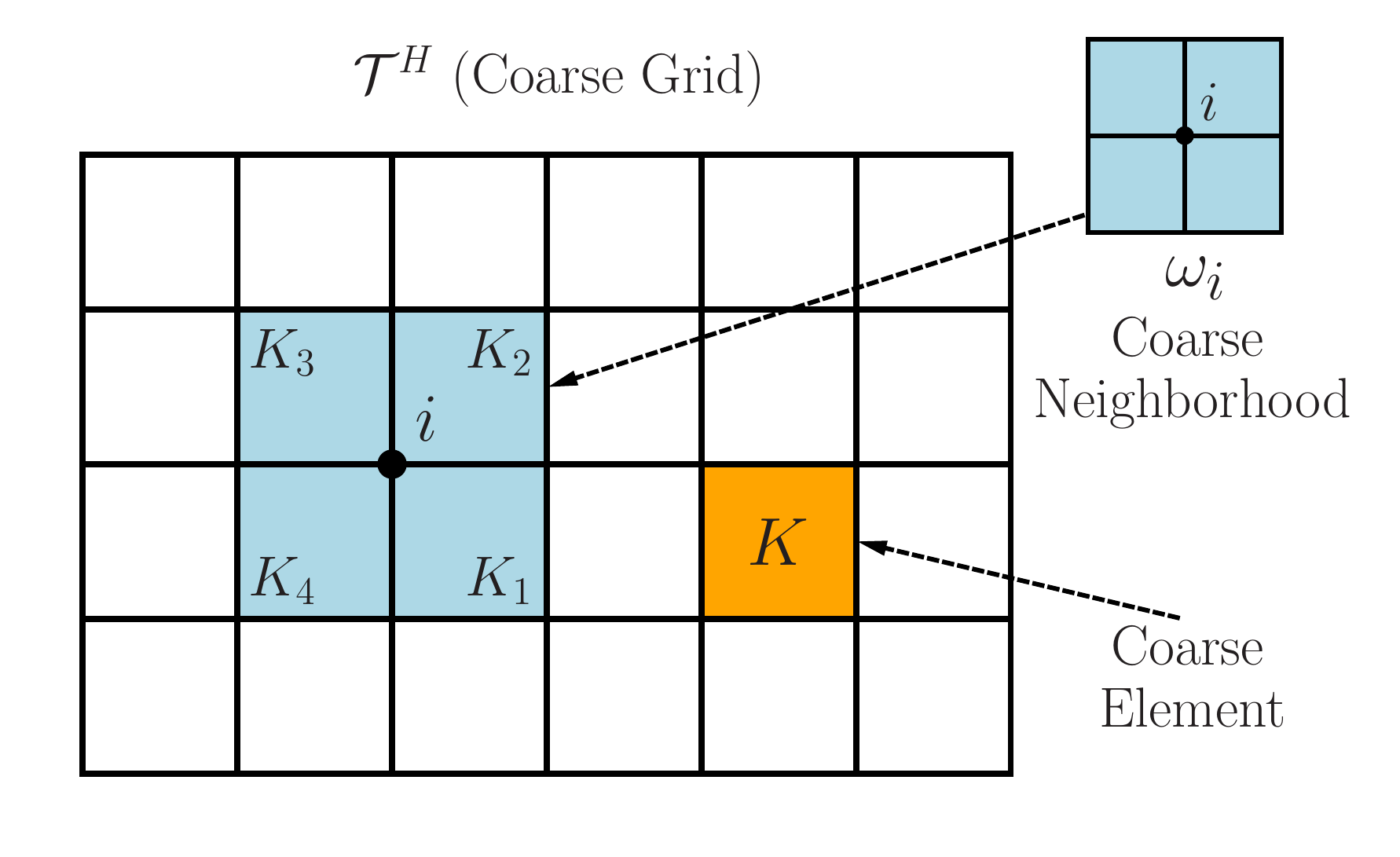}
	\caption{Illustration of coarse neighborhood and coarse element.}
	\label{fig:grid}
\end{figure}

Let $V^h$ be the space of the first-order Lagrange function with respect to the
fine-grid mesh $\mathcal{T}^h$. Then the finite element approximation to \eqref{eq:model} on the fine grid is to seek
\begin{align}
(\frac{\partial}{\partial t}(\phi \rho(p_h)),v)+(\frac{\kappa}{\mu}\rho (p_h)\nabla p_h,\nabla v)=(q,v)\quad \forall v\in V_h.\label{eq:fine}
\end{align}

To derive the fully discrete scheme for \eqref{eq:fine}, we introduce a partition of the time interval $[0,T]$ into subintervals $[t^n,t^{n-1}]$, $1\leq n\leq N_t$ ($N_t$ is an integer) and denote the time step size by
$\delta_t^n=t^n-t^{n-1} $. Then using backward Euler scheme in time, we can obtain the fully discrete scheme as follows: Find
$p_h^n$ such that
\begin{equation}\label{eq:femapp}
(\phi\rho(p_h^n),w_h)-(\phi\rho(p_h^{n-1}),w_h)+\delta t^n(\frac{\kappa}{\mu}\nabla \rho(p_h^n),\nabla w_h)-\delta t^n(q,w_h)=0\quad \forall w_h\in V_h.
\end{equation}
The nonlinear equation \eqref{eq:femapp} can be solved by the Newton's method.
Specifically, let $\{\phi_i\}_{i=1}^{i=N_f}$ be the finite element basis functions for $V_h$,  we now can write $p_h^{n,k}=\sum_{i}p_i^{n,k}\phi_i$ and $p_h^{n-1}=\sum_{i}p_i^{n-1}\phi_i$, $k$ denotes the $k$-th Newton iteration.
Then, we can recast the nonlinear equation \eqref{eq:femapp} as a residual equation
system:
\begin{equation}\label{eq:res}
F_j^{n,k}=\big(\phi\rho(\sum_{i}p_i^{n,k}\phi_i),\phi_j\big)-\big(\phi\rho(\sum_{i}p_i^{n-1}\phi_i),\phi_j\big)+\delta t^n\big(\frac{\kappa}{\mu}\nabla \rho(\sum_{i}p_i^{n,k}\phi_i),\nabla \phi_j\big)-\delta t^n(q_h,\phi_j)=0
\end{equation}
for $j=1,2,\ldots, N_f$.

To linearize the global problem, we should compute the partial derivatives of the
residual equation with respect to the unknown $p_i^{n,k}$
\begin{equation}\label{eq:partial}
J^{n,k}_{ji}:=\frac{\phi\partial F_j^{n,k}}{\partial p_i^{n,k}}=
(\phi\rho(p^{n,k})\phi_i,\phi_j)+\delta t^n(\frac{\kappa}{\mu}\rho(p^{n,k})\nabla\phi_i,\nabla\phi_j)+
\delta t^n(c\frac{\kappa}{\mu}\phi_i\rho(p^{n,k}),\nabla\phi_j),
\end{equation}
which results in a linear system that needs to solve
\begin{equation}\label{eq:linsys}
J^{n,k}\delta p^{n,k}=F^{n,k},
\end{equation}
where the Jacobi matrix $J^{n,k}=[J^{n,k}_{ji}]$, the residual $F^{n,k}=[F_j^{n,k}]$.
Then $p^{n,k+1}=p^{n,k}+\delta p^{n,k}$.

\section{Offline coarse space and coarse problem}\label{sec:offline}

The construction of the spectral coarse space consists of two steps. First, we construct the local snapshot space. Second, we reduce the dimension of the snapshot space by using a carefully defined spectral problem.  We first present the construction  of local snapshot spaces in $\omega_i$.
There are two types of local snapshot spaces.
The first type is

\begin{equation*}
V_1^{i,\text{snap}} = V^h(\omega_i),
\end{equation*}
where $V(\omega_i)$ is the restriction of $V^h$ to $\omega_i$.
Therefore, $V_1^{i,\text{snap}}$ contains all possible fine scale functions defined on $\omega_i$. The second type is the harmonic extension space. More specifically, let $V^h(\partial\omega_i)$ be the restriction of the conforming space $V^h$ to $\partial\omega_i$.
Then we define the fine-grid delta function $\delta_k \in V^h(\partial\omega_i)$ on $\partial\omega_i$ by

\begin{equation*}
\delta_k( {x}_l) =
\begin{cases}
1, \quad & l = k, \\
0, \quad & l \ne k,
\end{cases}
\end{equation*}
where $ \{{x}_l\}$ are all fine grid nodes on $\partial\omega_i$.
Given $\delta_k$, we seek $ {u}_{k}$ by
\begin{equation}\label{eq:cg_snap_har}
\begin{cases}
\begin{aligned}
- \nabla\cdot(\rho(p_0){\kappa} \nabla u_{k}) &=  {0}, &&\text{in} \ \omega_i, \\
{u}_{k} &= \delta_k, &&\text{on} \ \partial\omega_i.
\end{aligned}
\end{cases}
\end{equation}
The linear span of the above harmonic extensions is our second type local snapshot space $V^{i,\text{snap}}_2$.
To simplify the representations, we will use $V^{i,\text{snap}}$ to denote $V^{i,\text{snap}}_1$ or $V^{i,\text{snap}}_2$
when there is no need to distinguish them. Moreover, we write

\begin{equation*}
V^{i,\text{snap}} = \text{span} \{  {\psi}^{i,\text{snap}}_k: k=1,2,\cdots, M^{i,\text{snap}} \},
\end{equation*}
where ${\psi}^{i,\text{snap}}_k$ is the snapshot functions, and
$M^{i,\text{snap}}$ is the number of basis functions in $V^{i,\text{snap}}$.

The dimension of the snapshot space is  too rich and thus expensive for computation.
A spectral problem will be performed to select the dominant modes from
the snapshot space.
Specifically, in each neighborhood $\omega_i$, we consider
\begin{equation}\label{eq:spec-cg}
\nabla\cdot(\rho(p_0)\kappa \nabla \phi)=\lambda \rho(p_0)\tilde{\kappa} \phi,
\end{equation}
where $\tilde{\kappa}={\rho(p_0)\kappa}\sum_{i=1}^{N_S} | \nabla \chi_i |^2,$
$N_S$ is the total number of neighborhoods, $p_0$ is the initial $p$
and $\chi_i$ is
the partition of unity function \cite{pu} for $\omega_i$. The choice of this spectral problem is motivated by analysis.
One choice of a partition of unity function is the coarse grid hat function whose value at the coarse vertex $x_i$ is 1 and 0 at all other coarse vertices. An alternative option is to use the multiscale finite element basis function (cf. \cite{hou1997multiscale}).
We solve the above spectral problem (\ref{eq:spec-cg}) in each coarse neighbourhood $\omega_i$ in the local snapshot space
$V^{i,\text{snap}}$.  The eigenvalues are arranged in increasing order such that $\lambda_1^{\omega_i}\leq \lambda_2^{\omega_i}\leq \cdots \leq\lambda_{L_i}^{\omega_i}\leq \lambda_{L_i+1}^{\omega_i}\leq\cdots\leq \lambda_{J_i}^{\omega_i}$ and the corresponding eigenfunctions are defined by $\phi_{l,k}$, where $\phi_{l,k}$ is the $k$-th component of $\phi_l$.
Then we use the first $L_i$ eigenfunctions to construct the local offline space, which is defined by

\begin{equation*}
{\psi}^{i,\text{off}}_l = \sum_{k=1}^{M^{i,\text{snap}}} \phi_{l,k}  {\psi}^{i,\text{snap}}_k, \quad l=1,2,\cdots, L_i.
\end{equation*}
Note that the function ${\psi}^{i,\text{off}}_l$ is not globally
continuous, therefore we need to multiply it with the partition of unity function $\chi_i$. We define the local offline space as

\begin{equation*}
V^{i,\text{off}}_H = \text{span} \{\chi_i{\psi}^{i,\text{off}}_l: l=1,2,\cdots, L_i \},\end{equation*}
then the offline space can be defined as

\begin{equation*}
V^{\text{off}}_H = \text{span} \{V^{i,\text{off}}_H: i=1,2,\cdots, N_S \}.
\end{equation*}
The Equations \eqref{eq:cg_snap_har} and \eqref{eq:spec-cg} are solved on the fine grid $\mathcal{T}^h$ numerically.
Panel (c)-(d) of Figure \ref{fig:eig} show an example of partition of unity $\chi$,
eigenfunction $\psi$ and  offline basis $\chi\psi$ corresponding to
local permeability field displayed in Panel (a) of Figure \ref{fig:eig}, the distortion of the offline basis due to the strong heterogeneity of
the local permeability field can be
observed.
 From Panel (b) of the Figure \ref{fig:eig}, we can see the inverse of the eigenvalues decreases rapidly which implies the dominance of the first several eigenfunctions.

Given the above space, the discrete formulation reads as follows: Find $p_H\in V^{\text{off}}_H$ such that
\begin{align}
(\frac{\partial}{\partial t}(\phi \rho(p_H)),v)+(\frac{\kappa}{\mu}\rho (p_H)\nabla p_H,\nabla v)=(q,v)\quad \forall v\in V^{\text{off}}_H.\label{eq:coarse-sol}
\end{align}
Note that
\begin{align}
R_{p_h}(v)=(q,v)-(\frac{\partial}{\partial t}(\phi \rho(p_h)),v)-(\frac{\kappa}{\mu}\rho (p_h)\nabla p_h,\nabla v)=0\quad \forall v\in V^{\text{off}}_H\label{eq:Rph}
\end{align}
since $V^{\text{off}}_H\subset V_h$.

\begin{figure}[htb!]
	\centering
	\includegraphics[trim={0cm 1.5cm 0cm 0cm},clip,width=6.0in]{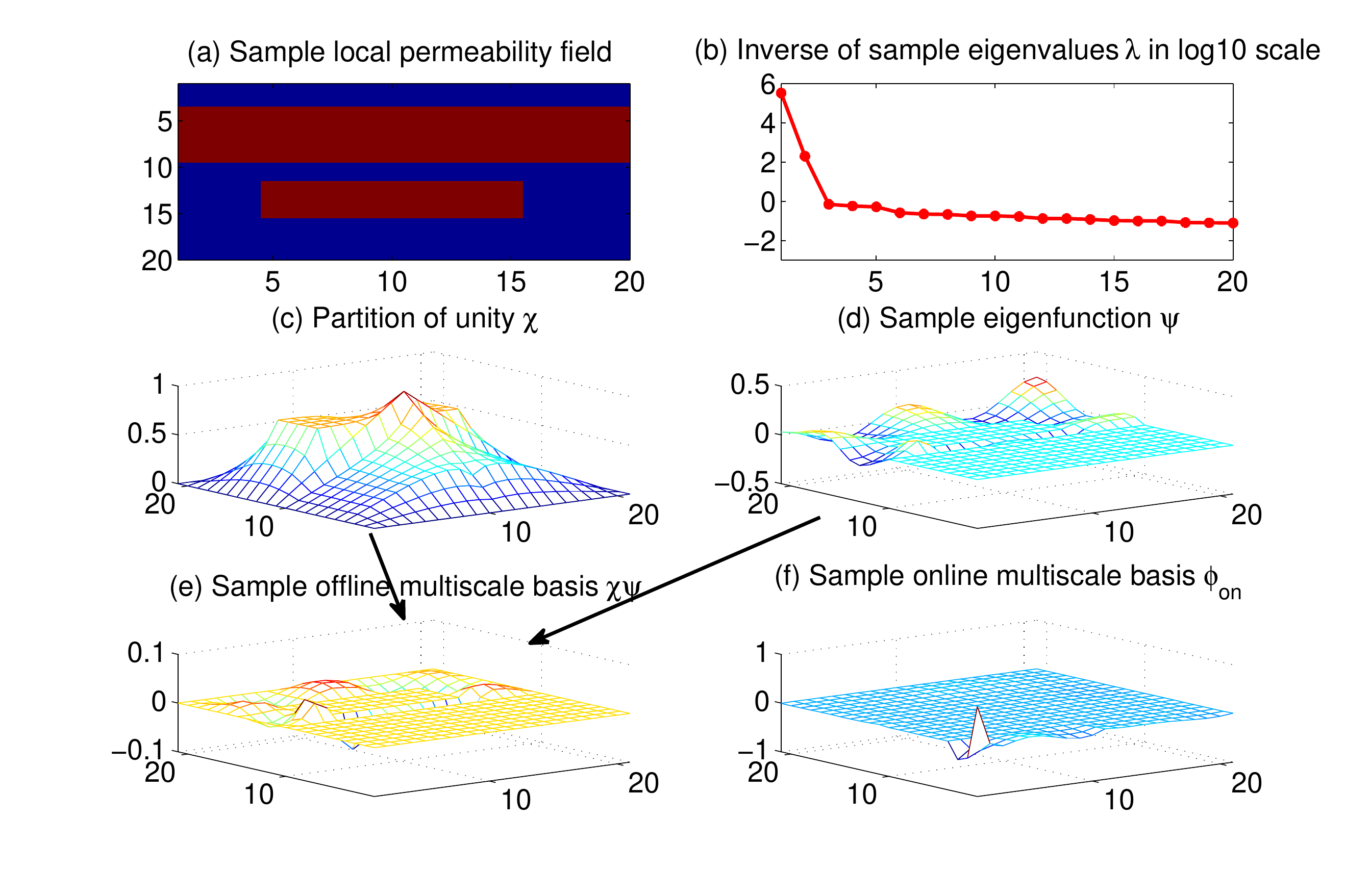}			
	\caption{Panel (a):
		an example of local permeability field. Panel (b): inverse of  20 smallest eigenvalues $\lambda$ in log10 scale. Panel (c): an example of partition of unity $\chi$.
		Panel (d): an example of eigenfunction $\psi$ (e): an example of offline basis $\chi\psi$. Panel (f): an example of online basis $\phi_{\text{on}}$.}
	\label{fig:eig}
\end{figure}

Denote each discrete offline or online multiscale basis
as a column vector $\Phi_i$ and $R = [ \Phi_1,\cdots,  \Phi_L ]$ be the projection matrix that stores all the multiscale
basis functions.
Then the coarse  linear system in Newton's method that requires to be solved is
\begin{equation}\label{eq:coarse}
R^T J^{n,k} R \delta p^{n,k}_H=R^TF^{n,k}.
\end{equation}
The dimension of the matrix $R^T J^{n,k} R$ is much smaller than the matrix
$J^{n,k}$ if only a few multiscale bases are utilized, which implies solving \eqref{eq:coarse} is cheaper than solving \eqref{eq:linsys}.
Once $\delta p^{n,k}_H$ is
obtained one can project it to the space $V_h$ by using the projection matrix $R^T$
via $p^{n,k}_{H,f}=R\delta p^{n,k}_H$ to seek a fine-scale representation of the coarse-grid update.
We summarize the algorithm of using GMsFEM and Newton's method for solving \eqref{eq:model} in Algorithm 1.
The fine-scale reference solution can be obtained similarly.

\begin{table}
	\centering
	\begin{tabular}{l}
		\hline
		Algorithm 1 \quad\quad  GMsFEM with Newton's method for  solving \eqref{eq:model} \\
		\hline
		
		{Superscript $n$ denotes time index and $n$, $k$ Newton index $k$ at time n. }\\
		Given $p^0_{H}$,\quad \{Initial Condition\}\\
		\textbf{for} $n=1$,\ldots,$N_T$ \{Time index\}\quad {\bf do}\\
		\quad $p^{n,0}_{H}=p^{n-1}_{H}$\quad \{Initial guess for Newton iteration\}\\
		\quad {\bf for} $k=0,\ldots,$ {\text{MAX}\_\text{NEWT}\_\text{IT}} \{Newton index\} \quad {\bf do}\\
		\quad \quad $F^{n,k}=[F_j^{n,k}]$\quad {Form Newton residual}\\
		\quad\quad {\bf if}\quad ($||F^{n,k} ||<\text{NEWT}_\text{TOL}$)\quad
		{\bf then break} $k$-loop \quad \{Check nonlinear convergence\}\\
		\quad\quad $J^{n,k}=[J^{n,k}_{ji}]$\quad {Form matrix}\\
		\quad\quad $\delta p^{n,k}_H=(R^T J^{n,k} R)^{-1}(R^TF^{n,k})$\quad \{Solve linear system \}\\
		\quad\quad $p^{n,k+1}_{H}=p^{n,k}_{H}+R\delta p^{n,k}_H$\quad \{increments unknowns\}\\
		\quad {\bf end for}\\
		\quad $p^n_{H}=p^{n,k}_{H}$\\
		{\bf end for}\\
		\hline
	\end{tabular}
	\label{ta:ag1}
\end{table}
\section{Convergence error estimates}\label{sec:convergence}

In this section, we will present the convergence error estimates for the semi-discrete scheme \eqref{eq:coarse-sol}. Specifically, error estimates based on $V_1^{\text{snap}}$ and $V_2^{\text{snap}}$ are both proved. The analysis consists of two main steps. First, we bound the difference between the fine scale solution and GMsFEM solution by the difference between the fine scale solution and the projection of the fine grid solution. Second, we derive the error estimate for the difference between the fine scale solution and its corresponding projection.

To begin, we recall the continuous Gronwall inequality in the following lemma, see \cite{Cannon99}.
\begin{lemma}\label{lemma:continuousGronwall}
(the continuous Gronwall lemma).
Let $G$ be a nonnegative function and
let $y,f,g$ be locally integrable nonnegative on the interval $[t_0,\infty)$. Assume there exists
a constant $C_0\geq 0$ such that
\begin{align*}
y(t)+G(t)\leq C_0+\int_{t_0}^t f(\tau)\;dt+\int_{t_0}^t g(\tau)y(\tau)\;d\tau\quad \forall t\in [t_0,\infty),
\end{align*}
then
\begin{align*}
y(t)+G(t)\leq \Big(C_0+\int_{t_0}^tf(\tau\;d\tau\Big)e^{\int_{t_0}^tg(\tau)\;d\tau}\quad \forall t\in [t_0,\infty).
\end{align*}

\end{lemma}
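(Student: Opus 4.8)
The plan is to reduce the statement to the classical integrating-factor argument for Gronwall's inequality. First I would discard the nonnegative term $G(t)$ on the left-hand side of the hypothesis, noting that it can be reinstated at the very end for free. I would then introduce the auxiliary function
\[
w(t)=C_0+\int_{t_0}^t f(\tau)\,d\tau+\int_{t_0}^t g(\tau)y(\tau)\,d\tau,
\]
which is exactly the right-hand side of the assumed inequality. By construction the hypothesis reads $y(t)+G(t)\le w(t)$, and in particular $y(t)\le w(t)$ because $G\ge 0$. Since $f$ and $g\,y$ are locally integrable, $w$ is absolutely continuous with $w(t_0)=C_0$ and, for almost every $t$,
\[
w'(t)=f(t)+g(t)y(t)\le f(t)+g(t)w(t),
\]
where the inequality uses $g\ge 0$ together with $y\le w$.

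Next I would multiply the differential inequality $w'(t)-g(t)w(t)\le f(t)$ by the integrating factor $\mu(t)=e^{-\int_{t_0}^t g(s)\,ds}$. As $\mu>0$ and $(\mu w)'=\mu\,(w'-g w)$ almost everywhere, this yields $(\mu(t)w(t))'\le \mu(t)f(t)$. Integrating from $t_0$ to $t$ and using $\mu(t_0)=1$, $w(t_0)=C_0$ gives
\[
\mu(t)w(t)\le C_0+\int_{t_0}^t \mu(\tau)f(\tau)\,d\tau.
\]
Dividing by $\mu(t)=e^{-\int_{t_0}^t g(s)\,ds}$ and bounding the weight $\mu(\tau)\le 1$ for $\tau\ge t_0$ (once more because $g\ge 0$) so as to replace $\int_{t_0}^t \mu(\tau)f(\tau)\,d\tau$ by the larger quantity $\int_{t_0}^t f(\tau)\,d\tau$, I obtain $w(t)\le\big(C_0+\int_{t_0}^t f(\tau)\,d\tau\big)e^{\int_{t_0}^t g(\tau)\,d\tau}$. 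Chaining this with the inequality $y(t)+G(t)\le w(t)$ from the first step delivers precisely the claimed estimate, with the $G$ term retained at no extra cost.

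There is no serious obstacle here: this is the standard continuous Gronwall lemma, and the integrating factor carries the entire argument. The only points demanding a little care are regularity-theoretic rather than conceptual, namely justifying that $w$ is absolutely continuous under mere local integrability of $f$ and $g\,y$ so that the fundamental theorem of calculus and the product-rule identity $(\mu w)'=\mu(w'-g w)$ are valid almost everywhere, and keeping track of the two places where the sign condition $g\ge 0$ enters (to pass from $g\,y$ to $g\,w$ in the differential inequality, and to drop the weight $\mu\le 1$ inside the $f$-integral). If one wished to avoid differentiating $w$ altogether, a Picard-type iteration of the integral inequality would give the same bound, but the integrating-factor route is shorter and is the one I would present.
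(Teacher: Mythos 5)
Your integrating-factor argument is correct: it is the standard proof of the continuous Gronwall lemma, and the two places where $g\ge 0$ is used, together with the harmless reinstatement of the nonnegative term $G$, are handled properly. Note that the paper does not actually prove this statement---it is quoted as a known result with a citation to the literature---so there is no in-paper argument to compare against; your proof simply supplies the standard justification.
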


%
%
%
%
%
%
%
%
%
%
%
%
%
%

\begin{lemma}\label{thm:error1}

Let $ p_h\in V_h$ be the fine scale solution obtained from \eqref{eq:fine}, $p_H\in V^{\textnormal{off}}_H$ be the GMsFEM solution of \eqref{eq:coarse-sol} and $w_H$ be an arbitrary function belonging to $V^{\textnormal{off}}_H$. Then the following error estimate holds
\begin{align*}
&\|(p_h-p_H)(t)\|_{L^2(D)}^2+\int_0^T \|(\frac{\kappa}{\mu})^{1/2}\nabla (p_h-p_H)\|_{L^2(D)}^2\preceq\|(w_H-p_H)(0)\|_{L^2(D)}^2\\
&\;+\int_0^T\Big(\|(\frac{\kappa}{\mu})^{1/2}\nabla(p_h-w_H)\|_{L^2(D)}^2+
\|p_h-w_H\|_{L^2(D)}^2+\|(p_h-w_H)_t\|_{L^2(D)}^2\Big)\;dt \\ &\;+\|(p_h-w_H)(t)\|_{L^2(D)}^2.
\end{align*}

\end{lemma}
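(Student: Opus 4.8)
The plan is to run an energy argument adapted to the nonlinear density, with the continuous Gronwall lemma (Lemma~\ref{lemma:continuousGronwall}) closing the estimate. First I would introduce the splitting $p_h-p_H=\eta+\theta$ with $\eta:=p_h-w_H$ and $\theta:=w_H-p_H\in V^{\text{off}}_H$, so that $\theta$ is an admissible test function in \eqref{eq:coarse-sol}. Since $V^{\text{off}}_H\subset V_h$, subtracting \eqref{eq:coarse-sol} from \eqref{eq:fine} (cf.\ the Galerkin orthogonality \eqref{eq:Rph}) and testing against $v=\theta$ yields the error equation
\begin{align*}
\big(\phi\,\partial_t(\rho(p_h)-\rho(p_H)),\theta\big)+\big(\tfrac{\kappa}{\mu}(\rho(p_h)\nabla p_h-\rho(p_H)\nabla p_H),\nabla\theta\big)=0.
\end{align*}

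Next I would treat the two nonlinear contributions separately. For the diffusion term I would use the splitting
\begin{align*}
\rho(p_h)\nabla p_h-\rho(p_H)\nabla p_H=\rho(p_h)\nabla(\eta+\theta)+(\rho(p_h)-\rho(p_H))\nabla p_H,
\end{align*}
so that the term $(\tfrac{\kappa}{\mu}\rho(p_h)\nabla\theta,\nabla\theta)$ is coercive, bounded below by $\rho_{\min}\|(\tfrac{\kappa}{\mu})^{1/2}\nabla\theta\|_{L^2(D)}^2$ using the strictly positive lower bound on $\rho$; the remaining pieces, including the cross term handled through the mean-value identity $\rho(p_h)-\rho(p_H)=\rho'(\zeta)(p_h-p_H)$ together with an $L^\infty$ bound on $\nabla p_H$, are moved to the right-hand side and controlled by Cauchy--Schwarz and Young's inequality in terms of $\|(\tfrac{\kappa}{\mu})^{1/2}\nabla\eta\|_{L^2(D)}^2$, $\|\eta\|_{L^2(D)}^2$, and a small multiple of $\|(\tfrac{\kappa}{\mu})^{1/2}\nabla\theta\|_{L^2(D)}^2$. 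For the time term I would use $\partial_t\rho(p)=c\,\rho(p)\,\partial_t p$ to write
\begin{align*}
\partial_t(\rho(p_h)-\rho(p_H))=c\,\rho(p_h)\,\partial_t(\eta+\theta)+c\,(\rho(p_h)-\rho(p_H))\,\partial_t p_H,
\end{align*}
and then convert the principal piece via the product rule,
\begin{align*}
\big(\phi\rho(p_h)\partial_t\theta,\theta\big)=\tfrac12\tfrac{d}{dt}\big(\phi\rho(p_h)\theta,\theta\big)-\tfrac12\big(\phi\,\partial_t\rho(p_h)\,\theta,\theta\big),
\end{align*}
where the last term is absorbed into $\|\theta\|_{L^2(D)}^2$ using boundedness of $\partial_t\rho(p_h)$. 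The terms carrying $\partial_t\eta$ and $\partial_t p_H$ are again bounded by Young's inequality, producing $\|\partial_t\eta\|_{L^2(D)}^2$, $\|\eta\|_{L^2(D)}^2$, and $\|\theta\|_{L^2(D)}^2$ on the right.

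Collecting these estimates gives a differential inequality of the form
\begin{align*}
\tfrac{d}{dt}\big(\phi\rho(p_h)\theta,\theta\big)+\rho_{\min}\big\|(\tfrac{\kappa}{\mu})^{1/2}\nabla\theta\big\|_{L^2(D)}^2\preceq \|\eta\|_{L^2(D)}^2+\|\partial_t\eta\|_{L^2(D)}^2+\big\|(\tfrac{\kappa}{\mu})^{1/2}\nabla\eta\big\|_{L^2(D)}^2+\|\theta\|_{L^2(D)}^2.
\end{align*}
Integrating from $0$ to $t$, using that $\rho(p_h)$ is bounded above and below so that $(\phi\rho(p_h)\theta,\theta)$ is equivalent to $\|\theta\|_{L^2(D)}^2$, and applying the continuous Gronwall lemma to absorb the $\|\theta\|_{L^2(D)}^2$ feedback, I would obtain a bound on $\|\theta(t)\|_{L^2(D)}^2+\int_0^t\|(\tfrac{\kappa}{\mu})^{1/2}\nabla\theta\|_{L^2(D)}^2$ by $\|\theta(0)\|_{L^2(D)}^2$ plus the time integrals of the $\eta$-terms. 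The claimed estimate then follows from the triangle inequality $p_h-p_H=\eta+\theta$, which produces the extra endpoint term $\|\eta(t)\|_{L^2(D)}^2=\|(p_h-w_H)(t)\|_{L^2(D)}^2$ and identifies $\|\theta(0)\|_{L^2(D)}^2=\|(w_H-p_H)(0)\|_{L^2(D)}^2$. The main obstacle is the consistent linearization of $\rho$ in both the time and diffusion terms: the product-rule step leaves the residual $(\phi\,\partial_t\rho(p_h)\theta,\theta)$ and the diffusion split leaves the cross term $(\rho(p_h)-\rho(p_H))\nabla p_H$, and controlling both with mesh-independent constants hinges on a priori $L^\infty$ bounds on $\rho$, $\partial_t p_h$, and $\nabla p_H$ --- these are exactly what keep the hidden constants in $\preceq$ independent of the discretization.
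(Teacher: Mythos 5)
Your proposal follows the same skeleton as the paper's proof: Galerkin orthogonality \eqref{eq:Rph} gives the error equation, you test with $\theta=w_H-p_H\in V^{\text{off}}_H$, extract coercivity from the diffusion term and a time derivative of an $L^2$-equivalent quantity from the accumulation term, and close with the continuous Gronwall lemma and the triangle inequality. The differences are in where you center the linearizations. For the time term the paper uses the Wheeler-type primitive $\frac{d}{dt}\int_D\phi\int_0^{w_H-p_H}\rho'(w_H+\xi)\xi\,d\xi\,d\bm{x}$, which plays exactly the role of your $\tfrac12\tfrac{d}{dt}(\phi\rho(p_h)\theta,\theta)$; both are equivalent to $\|\theta\|_{L^2(D)}^2$ by the positivity and boundedness of $\rho'$, so that choice is cosmetic. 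More consequential is that your cross terms come out as $(\rho(p_h)-\rho(p_H))\nabla p_H$ and $(\rho(p_h)-\rho(p_H))\,\partial_t p_H$, so closing the estimate requires $L^\infty$ control of $\nabla p_H$ and $\partial_t p_H$, i.e.\ of the unknown coarse GMsFEM solution, uniformly in $H$; such bounds are not available a priori, and for high-contrast $\kappa$ a uniform $W^{1,\infty}$ bound on $p_H$ is particularly doubtful. The paper arranges its splittings so that the $L^\infty$ requirements fall instead on $\nabla p_h$ (it explicitly assumes $p_h\in W^{1,\infty}(D)$) and on $\partial_t w_H$, quantities tied to the fine-scale solution and its projection. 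If you redistribute your decomposition accordingly, e.g.\ $\rho(p_h)\nabla p_h-\rho(p_H)\nabla p_H=\rho(p_H)\nabla\theta+\rho(p_H)\nabla\eta+(\rho(p_h)-\rho(p_H))\nabla p_h$, and similarly shift the time-derivative cross term onto $\partial_t w_H$, your argument closes under the same hypotheses as the paper's and yields the stated estimate.
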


\begin{proof}

For any $v\in V^{\text{off}}_H$, we have the following error equation

\begin{align*}
&(\frac{\partial}{\partial t}(\phi \rho(p_h)),v)+(\frac{\kappa}{\mu}\rho (p_h)\nabla p_h,\nabla v)-(\frac{\partial}{\partial t}(\phi \rho(p_H)),v)-(\frac{\kappa}{\mu}\rho (p_H)\nabla p_H,\nabla v)\\
&=(\frac{\partial}{\partial t}(\phi \rho(p_h)),v)+(\frac{\kappa}{\mu}\rho (p_h)\nabla p_h,\nabla v)-(f,v)=-R_{p_h}(v)=0.
\end{align*}
For any $w_H\in V^{\text{off}}_H$, setting $v=w_H-p_H$ in the above equation yields
\begin{align*}
&(\frac{\partial}{\partial t}(\phi \rho(p_h)),w_H-p_H)+(\frac{\kappa}{\mu}\rho (p_h)\nabla p_h,\nabla(w_H-p_H))-(\frac{\partial}{\partial t}(\phi \rho(p_H)),w_H-p_H)\\
&\;-(\frac{\kappa}{\mu}\rho (p_H)\nabla p_H,\nabla (w_H-p_H))
=0,
\end{align*}
which can be rewritten as
\begin{align}
&(\frac{\partial}{\partial t}(\phi (\rho(p_h)-\rho(p_H))),w_H-p_H)+(\frac{\kappa}{\mu}(\rho (p_h)\nabla p_h-\rho (p_H)\nabla p_H),\nabla(w_H-p_H))=0.\label{eq:error1}
\end{align}
Now we will estimate each term on the left hand side of \eqref{eq:error1} separately. First, the first term on the left hand side of \eqref{eq:error1} can be rewritten as
\begin{equation}
\begin{split}
(\phi\frac{\partial}{\partial t}(\rho(p_h)-\rho(p_H)),w_H-p_H)&=(\phi\frac{\partial}{\partial t}(\rho(w_H)-\rho(p_H)),w_H-p_H)\\
&\;+(\phi\frac{\partial}{\partial t}(\rho(p_h)-\rho(w_H)),w_H-p_H),
\end{split}
\label{eq:error-term1}
\end{equation}
where the first term on the right hand side can be estimated as follows by following \cite{Wheeler73,Park05,KimPark07}.
Notice that
\begin{align*}
(\phi \frac{\partial}{\partial t}(\rho(w_H)-\rho(p_H)),w_H-p_H)&=\frac{d}{dt}\int_D \phi \int_0^{w_H-p_H}\rho'(w_H+\xi)\xi d\xi d\bm{x}\\
&\;+\int_D\phi\int_D (\rho'(p_H)-\rho'(w_H))\frac{\partial w_H}{\partial t}(w_H-p_H)\\
&\;-\int_D \phi \int_0^{w_H-p_H}\rho''(w_H+\xi)\frac{\partial w_H}{\partial t}\xi d\xi d\bm{x},
\end{align*}
where we can estimate the last two terms by
\begin{align*}
|\int_D \phi \int_0^{w_H-p_H}\rho''(w_H+\xi)\frac{\partial w_H}{\partial t}\xi d\xi d\bm{x}|&\preceq \|w_H-p_H\|_{L^2(D)}^2,\\
|\int_D\phi\int_D (\rho'(p_H)-\rho'(w_H))\frac{\partial w_H}{\partial t}(w_H-p_H)|&\preceq \|w_H-p_H\|_{L^2(D)}^2.
\end{align*}
Thus we can infer that
\begin{align*}
(\phi \frac{\partial}{\partial t}(\rho(w_H)-\rho(p_H)),w_H-p_H)\geq \frac{d}{dt}\int_D \phi \int_0^{w_H-p_H}\rho'(w_H+\xi)\xi d\xi d\bm{x}-C_0\|w_H-p_H\|_{L^2(D)}^2,
\end{align*}
where $C_0$ is a positive constant independent of the meshsize.

Then, the first term on the right hand side of \eqref{eq:error-term1} can be bounded by
\begin{align}
(\phi\frac{\partial}{\partial t}(\rho(w_H)-\rho(p_H)),w_H-p_H)\geq \frac{d}{dt}\|w_H-p_H\|_{L^2(D)}^2-C_0\|w_H-p_H\|_{L^2(D)}^2,\label{eq:time1}
\end{align}
where we use
\begin{align}
\int_D \phi \int_0^{w_H-p_H}\rho'(w_H+\xi)\xi d\xi d\bm{x}\geq C \|w_H-p_H\|_{L^2(D)}^2.\label{eq:Ihpph}
\end{align}
The second term on the right hand side of \eqref{eq:error-term1} can be bounded by the chain rule and Young's inequality
\begin{align*}
(\phi\frac{\partial}{\partial t}(\rho(p_h)-\rho(w_H)),w_H-p_H)&=(\phi (\rho'(p_h)-\rho'(w_H))\frac{\partial w_H}{\partial t},w_H-p_H)\\
&\;+(\phi \rho'(p_h)(\frac{\partial p_h}{\partial t}-\frac{\partial w_H}{\partial t}),w_H-p_H)\\
&\preceq\|p_h-w_H\|_{L^2(D)}^2+\|(p_h-w_H)_t\|_{L^2(D)}^2
+\|w_H-p_H\|_{L^2(D)}^2.
\end{align*}
It remains to estimate the second term on the left hand side of \eqref{eq:error1}. We have
\begin{align*}
&(\frac{\kappa}{\mu}\rho (p_h)\nabla p_h,\nabla (w_H-p_H))-(\frac{\kappa}{\mu}\rho (p_H)\nabla p_H,\nabla (w_H-p_H))\\
&=(\frac{\kappa}{\mu}\rho (p_H)\nabla (w_H-p_H),\nabla(w_H-p_H))\\
&\;-(\frac{\kappa}{\mu}\rho (p_H)\nabla w_H,\nabla(w_H-p_H))+(\frac{\kappa}{\mu}\rho (p_h)\nabla p_h,\nabla(w_H-p_H)),
\end{align*}
which can be estimated by
\begin{align*}
(\frac{\kappa}{\mu}\rho (p_H)\nabla (w_H-p_H),\nabla (w_H-p_H))&\geq C \|(\frac{\kappa}{\mu})^{1/2}\nabla (w_H-p_H)\|_{L^2(D)}^2,\\
(\frac{\kappa}{\mu}(\rho(p_h)\nabla p_h-\rho (p_H)\nabla w_H),\nabla(w_H-p_H))&=(\frac{\kappa}{\mu}(\rho(p_h)\nabla p_h-\rho (p_H)\nabla p_h),\nabla(w_H-p_H))\\
&\;+(\frac{\kappa}{\mu}(\rho(p_H)\nabla p_h-\rho (p_H)\nabla w_H),\nabla(w_H-p_H))\\
&\leq C \Big(\|(\frac{\kappa}{\mu})^{1/2}\nabla(p_h-w_H)\|_{L^2(D)}^2+\|p_h-w_H\|_{L^2(D)}^2\Big)\\
&\;+\epsilon\|(\frac{\kappa}{\mu})^{1/2}\nabla(w_H-p_H)\|_{L^2(D)}^2,
\end{align*}
where in the last inequality we use the boundedness of $\rho$ and $\rho'$, and $p_h\in W^{1,\infty}(D)$.

Combining the above estimates and taking $\epsilon$ small enough, we can obtain
\begin{align*}
&\frac{d}{dt}\|w_H-p_H\|_{L^2(D)}^2+\|(\frac{\kappa}{\mu})^{1/2}\nabla (p_H-w_H)\|_{L^2(D)}^2\preceq\|(\frac{\kappa}{\mu})^{1/2}\nabla(p_h-w_H)\|_{L^2(D)}^2\\
&\;+\|p_h-w_H\|_{L^2(D)}^2+
\|(p_h-w_H)_t\|_{L^2(D)}^2+\|w_H-p_H\|_{L^2(D)}^2.
\end{align*}
Integrating with respect to time $t$ and using Gronwall lemma (cf. Lemma~\ref{lemma:continuousGronwall}), we can infer that
\begin{align*}
&\|(w_H-p_H)(t)\|_{L^2(D)}^2+\int_0^T \|(\frac{\kappa}{\mu})^{1/2}\nabla (p_H-w_H)\|_{L^2(D)}^2\preceq\|(w_H-p_H)(0)\|_{L^2(D)}^2\\
&\;+\int_0^T\Big(\|(\frac{\kappa}{\mu})^{1/2}\nabla(p_h-w_H)\|_{L^2(D)}^2
+\|(p_h-w_H)(t)\|_{L^2(D)}^2
+\|(p_h-w_H)_t\|_{L^2(D)}^2\Big)\;dt.
\end{align*}
Thus, the triangle inequality implies
\begin{align*}
&\|(p_h-p_H)(t)\|_{L^2(D)}^2+\int_0^T \|(\frac{\kappa}{\mu})^{1/2}\nabla (p_h-p_H)\|_{L^2(D)}^2\preceq\|(w_H-p_H)(0)\|_{L^2(D)}^2\\
&\;+\int_0^T\Big(\|(\frac{\kappa}{\mu})^{1/2}\nabla(p_h-w_H)\|_{L^2(D)}^2
+\|p_h-w_H\|_{L^2(D)}^2\\
&\;+\|(p_h-w_H)_t\|_{L^2(D)}^2\Big)\;dt +\|(p_h-w_H)(t)\|_{L^2(D)}^2.
\end{align*}
Therefore, the proof is completed.

\end{proof}

In order to prove the error estimate, it remains to show the error bound for $p_h-w_H$. Notice that on each coarse neighborhood $\omega_i$, we can express $p_h$ as
\begin{align}
p_h= \sum_{k=1}^{J_i} c_{k,\omega_i}\psi_{k}^{i,\text{off}},\label{eq:ph}
\end{align}
where $c_{k,\omega_i}$ is determined by a $L^2$-type projection.

Since $w_H$ is an arbitrary function belonging to $V_H^{\text{off}}$, we define $w_H$ on each coarse neighborhood $\omega_i$ by
\begin{align}
w_H=\sum_{k=1}^{L_i} c_{k,\omega_i}\psi_{k}^{i,\text{off}},\label{eq:wH}
\end{align}
where $L_i\leq J_i$ is the number of eigenfunctions selected for the coarse neighborhood $\omega_i$.

In the following, we will prove the error bound for $p_h-w_H$, and multiscale basis based on $V_1^{\text{snap}}$ and $V_2^{\text{snap}}$ will be considered. We first prove the convergence error estimate for $p_h-w_H$ based on $V_1^{\text{snap}}$ and the estimate is stated in Lemma~\ref{lemma:V1snap}.

\begin{lemma}\label{lemma:V1snap}

Let $ p_h\in V_h$ be the fine scale solution obtained from \eqref{eq:fine} and $w_H$ be an arbitrary function belonging to $V^{\textnormal{off}}_H$. Then the following error estimate holds
\begin{align*}
\int_0^T\int_{D}\frac{\kappa}{\mu} \nabla (p_h-w_H)^2\;dx&\preceq \int_0^T (\frac{1}{\Lambda_*})^{n+1}\sum_{i=1}^{N_S}\int_{\omega_i} \frac{\kappa}{\mu}\nabla (p_h-w_H)^2\;dx\\
&\;+\int_0^T \Big((\Lambda_*)^n (\frac{1-\Lambda_*^{-n}}{\Lambda_*-1})+1\Big)\sum_{i=1}^{N_S} \int_{\omega_i}(\frac{\kappa}{\mu}|\nabla \chi_i|^2)^{-1} r^2\;dx,
\end{align*}
where $r$ represents the residual and is defined by
\begin{align}
r:=\frac{\partial (\phi \rho(p_h))}{\partial t}-\nabla \cdot (\frac{\kappa}{\mu}\rho(p_h) \nabla p_h)-\frac{\partial (\phi \rho(w_H))}{\partial t}+\nabla \cdot (\frac{\kappa}{\mu}\rho(w_H) \nabla w_H)\label{eq:r}.
\end{align}

\end{lemma}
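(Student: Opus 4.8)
The plan is to reduce the global energy error to a sum of local contributions over the coarse neighborhoods, and then to control each local contribution by the discarded eigenvalues of the spectral problem \eqref{eq:spec-cg} together with the residual $r$. Writing $\Lambda_* = \min_{1\le i\le N_S}\lambda_{L_i+1}^{\omega_i}$ for the smallest of the first omitted eigenvalues, I would begin from the partition of unity identity $\sum_i \chi_i = 1$, which together with \eqref{eq:ph}--\eqref{eq:wH} gives $p_h - w_H = \sum_{i=1}^{N_S}\chi_i e_i$, where $e_i := \sum_{k=L_i+1}^{J_i} c_{k,\omega_i}\psi_k^{i,\textnormal{off}}$ is the local tail of the eigenexpansion on $\omega_i$. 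Expanding $\nabla(\chi_i e_i)$ by the product rule and using the bounded overlap of $\{\omega_i\}$, the global energy splits as
\begin{equation*}
\int_D \frac{\kappa}{\mu}|\nabla(p_h-w_H)|^2\,dx \preceq \sum_{i=1}^{N_S}\int_{\omega_i}\frac{\kappa}{\mu}|\nabla\chi_i|^2\,e_i^2\,dx + \sum_{i=1}^{N_S}\int_{\omega_i}\frac{\kappa}{\mu}\,|\nabla e_i|^2\,dx,
\end{equation*}
so the two model terms to estimate are a weighted mass term and a local energy term.

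The engine of the argument is the spectral inequality produced by \eqref{eq:spec-cg}. Since the $\psi_k^{i,\textnormal{off}}$ are eigenfunctions, they are orthogonal both in the $\int_{\omega_i}\rho(p_0)\kappa\,\nabla\cdot\nabla$ form and in the $\int_{\omega_i}\rho(p_0)\tilde{\kappa}\,(\cdot)(\cdot)$ form, and for the tail $e_i$ this yields $\int_{\omega_i}\rho(p_0)\tilde{\kappa}\,e_i^2 \le (\lambda_{L_i+1}^{\omega_i})^{-1}\int_{\omega_i}\rho(p_0)\kappa|\nabla e_i|^2$. Because $\tilde{\kappa} = \rho(p_0)\kappa\sum_j|\nabla\chi_j|^2 \gtrsim \rho(p_0)\kappa|\nabla\chi_i|^2$ and $\rho(p_0)$ is bounded above and below, the weighted mass term is absorbed into the local energy with the gain $1/\Lambda_*$, i.e. $\int_{\omega_i}\frac{\kappa}{\mu}|\nabla\chi_i|^2 e_i^2 \preceq \Lambda_*^{-1}\int_{\omega_i}\frac{\kappa}{\mu}|\nabla e_i|^2$. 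This is precisely where the eigenvalue decay enters the convergence rate.

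I would then introduce the residual. Testing the weak form of \eqref{eq:r} against a function supported on $\omega_i$ and splitting $\rho(p_h)\nabla p_h - \rho(w_H)\nabla w_H = \rho(p_h)\nabla(p_h-w_H) + (\rho(p_h)-\rho(w_H))\nabla w_H$, the leading term reproduces the local energy (using the lower bound on $\rho$), while the remaining nonlinear and time–derivative terms are lower order and are handled by the boundedness of $\rho$ and $\rho'$ and by $p_h\in W^{1,\infty}(D)$, exactly as in Lemma~\ref{thm:error1}. A Cauchy--Schwarz in the weighted pairing then produces the dual residual norm $\int_{\omega_i}\big(\frac{\kappa}{\mu}|\nabla\chi_i|^2\big)^{-1}r^2$, the weight $|\nabla\chi_i|^2$ being the one dictated by $\tilde{\kappa}$ and by the spectral inequality of the previous step.

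Finally, the powers of $\Lambda_*$ and the geometric factor $(\Lambda_*)^n\frac{1-\Lambda_*^{-n}}{\Lambda_*-1}=\sum_{j=0}^{n-1}\Lambda_*^j$ reflect that the local tail energy and the neighborhood error $\int_{\omega_i}\frac{\kappa}{\mu}|\nabla(p_h-w_H)|^2$ are not the same object: on $\omega_i$ the global error is the superposition $\sum_j\chi_j e_j$ of tails from all neighbors $\omega_j$ meeting $\omega_i$, so the two are coupled through the overlap. Resolving this coupling by repeated use of the spectral inequality telescopes, after $n$ applications, into the stated geometric coefficient on the residual together with the small prefactor $\Lambda_*^{-(n+1)}$ on the leftover error. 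I expect the main obstacle to be exactly this bookkeeping: propagating the estimate across overlapping neighborhoods with mesh-independent constants, while ensuring that the nonlinear density differences generated at each step remain genuinely lower order so that they are absorbed rather than accumulated. Integrating in time over $[0,T]$ then yields the claimed bound.
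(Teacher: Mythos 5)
Your proposal follows essentially the same route as the paper: identify $p_h-w_H$ on each $\omega_i$ with the eigenexpansion tail, use the spectral problem \eqref{eq:spec-cg} to trade the $\kappa|\nabla\chi_i|^2$-weighted mass term for local energy with a gain of one power of the eigenvalue, control the cut-off local energy by a Caccioppoli-type test of the residual identity against $\chi_i^2(p_h-w_H)$ (which is where the weight $(\frac{\kappa}{\mu}|\nabla\chi_i|^2)^{-1}$ on $r^2$ comes from), and iterate $m$ times as in Efendiev--Galvis--Wu to produce the geometric factor. Two bookkeeping points differ from the paper and are worth flagging. First, the paper's $\Lambda_*$ is not $\min_i\lambda_{L_i+1}^{\omega_i}$ but $\min_i\lambda_{L_i+1}^{\omega_i}/\alpha_{L_i+1}^{\omega_i}$ with $\alpha_{L_i+1}^{\omega_i}=\int_{\omega_i}\kappa|\nabla(p_h-w_H)|^2/\int_{\omega_i}\kappa\chi_i^2|\nabla(p_h-w_H)|^2$; this correction is exactly what lets the loop close, since the spectral inequality delivers the \emph{full} local energy while the Caccioppoli step only controls the $\chi_i^2$-weighted one. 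Relatedly, the iteration is driven by this cut-off mismatch (mass $\to$ full energy $\to$ cut-off energy $\to$ mass $+$ residual), rather than by overlap coupling between neighborhoods as you describe, although the net effect is the same telescoping geometric sum. Second, the lower-order $L^2$ and time-derivative contributions do not simply disappear by integration in time: the paper absorbs them with the continuous Gronwall lemma after integrating \eqref{eq:part1}, and you should make that step explicit. With those two repairs your outline matches the paper's argument.
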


\begin{proof}

Multiplying both sides of \eqref{eq:r} by $\chi_i^2(p_h-w_H)$ and integrating over $\omega_i$, we can get
\begin{equation}
\begin{split}
\int_{\omega_i} r\chi_i^2(p_h-w_H)&=\int_{\omega_i}\phi \frac{\partial }{\partial t}( \rho(p_h)-\rho(w_H))\chi_i^2(p_h-w_H)\;dx\\
&\;+\int_{\omega_i} \frac{\kappa}{\mu}(\rho(p_h)\nabla p_h-\rho(w_H)\nabla w_H)\cdot\nabla (\chi_i^2 (p_h-w_H))\;dx.
\end{split}
\label{eq:rchi}
\end{equation}
Proceeding analogously to \eqref{eq:time1}, the first term on the right hand side of \eqref{eq:rchi} can be estimated by
\begin{align*}
\int_{\omega_i}\phi \frac{\partial }{\partial t}( \rho(p_h)-\rho(w_H))\chi_i^2(p_h-w_H)\;dx\geq \frac{d}{dt}\|p_h-w_H\|_{L^2(\omega_i)}^2-C_0 \|p_h-w_H\|_{L^2(\omega_i)}^2
\end{align*}
and the second term on the right hand side of \eqref{eq:rchi} can be bounded by
\begin{align*}
&\int_{\omega_i} \frac{\kappa}{\mu}(\rho(p_h)\nabla p_h-\rho(w_H)\nabla w_H)\cdot\nabla (\chi_i^2 (p_h-w_H))\;dx\\
&=\int_{\omega_i} \frac{\kappa}{\mu}(\rho(p_h)\nabla p_h-\rho(w_H)\nabla w_H)2\chi_i\cdot\nabla \chi_i (p_h-w_H)\;dx\\
&\;+\int_{\omega_i} \frac{\kappa}{\mu}(\rho(p_h)\nabla p_h-\rho(w_H)\nabla w_H)\chi_i^2\cdot\nabla (p_h-w_H)\;dx:=R_1+R_2.
\end{align*}
We can estimate $R_1$ by
\begin{align*}
&\int_{\omega_i} \frac{\kappa}{\mu}(\rho(p_h)\nabla p_h-\rho(w_H)\nabla w_H)2\chi_i\cdot\nabla \chi_i (p_h-w_H)\;dx\\
&=\int_{\omega_i} \frac{\kappa}{\mu}(\rho(p_h)\nabla p_h-\rho(p_h)\nabla w_H)2\chi_i\cdot\nabla \chi_i (p_h-w_H)\;dx\\
&\;+\int_{\omega_i} \frac{\kappa}{\mu}(\rho(p_h)\nabla w_H-\rho(w_H)\nabla w_H)2\chi_i\cdot\nabla \chi_i (p_h-w_H)\;dx\\
&\leq \epsilon_0\int_{\omega_i} \frac{\kappa}{\mu}\chi_i^2\nabla (p_h-w_H)^2+\frac{1}{\epsilon_0}\int_{\omega_i}\frac{\kappa}{\mu}|\nabla \chi_i|^2(p_h-w_H)^2\\
&\;+\int_{\omega_i}\frac{\kappa}{\mu}(p_h-w_H)^2\nabla \chi_i^2+\int_{\omega_i}\frac{\kappa}{\mu} (p_h-w_H)^2\chi_i^2\;dx.
\end{align*}
$R_2$ can be rewritten as follows
\begin{equation}
\begin{split}
&\int_{\omega_i} \frac{\kappa}{\mu}(\rho(p_h)\nabla p_h-\rho(w_H)\nabla w_H)\chi_i^2\cdot\nabla ( p_h-w_H)\;dx\\
&=\int_{\omega_i} \frac{\kappa}{\mu}(\rho(p_h)\nabla p_h-\rho(p_h)\nabla w_H)\chi_i^2\cdot\nabla (p_h-w_H)\;dx\\
&\;+\int_{\omega_i} \frac{\kappa}{\mu}(\rho(p_h)\nabla w_H-\rho(w_H)\nabla w_H)\chi_i^2\cdot\nabla (p_h-w_H)\;dx.
\end{split}
\label{eq:phwH}
\end{equation}
By the boundedness of $\rho$, we can estimate \eqref{eq:phwH} by
\begin{align*}
\int_{\omega_i} \frac{\kappa}{\mu}(\rho(p_h)\nabla p_h-\rho(p_h)\nabla w_H)\chi_i^2\cdot\nabla (p_h-w_H)\;dx&\geq C\int_{\omega_i} \frac{\kappa}{\mu}\chi_i^2\nabla (p_h-w_H)^2\;dx,\\
\int_{\omega_i} \frac{\kappa}{\mu}(\rho(p_h)\nabla w_H-\rho(w_H)\nabla w_H)\chi_i^2\cdot\nabla (p_h-w_H)\;dx&\preceq \frac{1}{\epsilon_1}\int_{\omega_i} \frac{\kappa}{\mu} (p_h-w_H)^2+\epsilon_1 \int_{\omega_i}\frac{\kappa}{\mu} \chi_i^2\nabla (p_h-w_H)^2\;dx.
\end{align*}
Combining the above estimates, and taking $\epsilon_0$ and $\epsilon_1$ small enough, we can obtain
\begin{equation}
\begin{split}
\int_{\omega_i}\frac{\kappa}{\mu} \chi_i^2\nabla (p_h-w_H)^2\;dx+\frac{d}{dt}\|p_h-w_H\|_{L^2(\omega_i)}^2&\preceq \int_{\omega_i} (p_h-w_H)^2\;dx+\int_{\omega_i} \frac{\kappa}{\mu} r\chi_i^2(p_h-w_H)\;dx\\
&\;+\int_{\omega_i} \frac{\kappa}{\mu}|\nabla \chi_i|^2(p_h-w_H)^2\;dx.
\end{split}
\label{eq:part1}
\end{equation}
Consequently, we have
\begin{align*}
\int_{\omega_i}\frac{\kappa}{\mu} \nabla (p_h-w_H)^2\;dx+\frac{d}{dt}\|p_h-w_H\|_{L^2(\omega_i)}^2
&\preceq \int_{\omega_i}(p_h-w_H)^2\;dx+\int_{\omega_i} \frac{\kappa}{\mu}|\nabla \chi_i|^2(p_h-w_H)^2\;dx \\
&\;+\int_{\omega_i}(\frac{\kappa}{\mu}|\nabla \chi_i|^2)^{-1}r^2\;dx.
\end{align*}
Integrating with respect to time and appealing to Lemma~\ref{lemma:continuousGronwall} yields
\begin{align*}
\int_0^T\sum_{i=1}^{N_S}\int_{\omega_i}\frac{\kappa}{\mu} \nabla (p_h-w_H)^2\;dx&\preceq\int_0^T\sum_{i=1}^{N_S}\int_{\omega_i} \frac{\kappa}{\mu}|\nabla \chi_i|^2(p_h-w_H)^2\;dx+\int_0^T\sum_{i}\int_{\omega_i}(\frac{\kappa}{\mu}|\nabla \chi_i|^2)^{-1}r^2\;dx\\
&\;+\|(p_h-w_H)(0)\|_{L^2(D)}^2.
\end{align*}
Then we can get the following estimate by using the spectral problem \eqref{eq:spec-cg}
\begin{align*}
\int_{\omega_i} \sum_{j=1}^{N_S} \kappa |\nabla \chi_j|^2 (p_h-w_H)^2\preceq \frac{1}{\lambda_{L_i+1}^{\omega_i}}\int_{\omega_i} \kappa |\nabla (p_h-w_H)|^2.
\end{align*}
Thereby, can can infer from \eqref{eq:part1} and Gronwall's lemma
\begin{align*}
\int_0^T\sum_i \int_{\omega_i} \kappa |\nabla \chi_i|^2 (p_h-w_H)^2&\preceq \int_0^T\sum_i \int_{\omega_i} \sum_j \kappa |\nabla \chi_j|^2 (p_h-w_H)^2\\
&\preceq \int_0^T\sum_{i=1}^{N_S} \frac{1}{\lambda_{L_i+1}^{\omega_i}} \int_{\omega_i}\kappa |\nabla (p_h-w_H)|^2\\
&\preceq\int_0^T \sum_{i=1}^{N_S} \frac{\alpha_{L_i+1}^{\omega_i}}{\lambda_{L_i+1}^{\omega_i}}\int_{\omega_i} \kappa \chi_i^2 |\nabla (p_h-w_H)|^2\\
&\preceq \int_0^T\sum_{i=1}^{N_S} \frac{\alpha_{L_i+1}^{\omega_i}}{\lambda_{L_i+1}^{\omega_i}}\int_{\omega_i} \kappa |\nabla \chi_i|^2(p_h-w_H)^2+\sum_i \frac{\alpha_{L_i+1}^{\omega_i}}{\lambda_{L_i+1}^{\omega_i}}|\int_{\omega_i} r \chi_i^2(p_h-w_H)|\\
&\preceq \frac{1}{\Lambda_*}\int_0^T\Big(\sum_i \int_{\omega_i} \kappa |\nabla \chi_i|^2(p_h-w_H)^2+\sum_{i=1}^{N_S} |\int_{\omega_i} r \chi_i^2 (p_h-w_H)|\Big),
\end{align*}
where $\Lambda_*=\min_{\omega_i} \frac{\lambda_{L_i+1}^{\omega_i}}{\alpha_{L_i+1}^{\omega_i}}$ and $\alpha_{L_i+1}^{\omega_i}=\int_{\omega_i} \kappa\nabla (p_h-w_H)^2\;dx/\int_{\omega_i} \kappa \chi_i^2 \nabla (p_h-w_H)^2\;dx$.

Then we can apply this inequality $m$ times as in \cite{Efendiev11} to get
\begin{align*}
\int_0^T\int_{D}\frac{\kappa}{\mu} \nabla (p_h-w_H)^2\;dx&\preceq \int_0^T (\frac{1}{\Lambda_*})^{m+1}\sum_{i=1}^{N_S}\int_{\omega_i} \frac{\kappa}{\mu}\nabla (p_h-w_H)^2\;dx\\
&\;+\int_0^T \Big((\Lambda_*)^m (\frac{1-\Lambda_*^{-m}}{\Lambda_*-1})+1\Big)\sum_{i=1}^{N_S} \int_{\omega_i}(\frac{\kappa}{\mu}|\nabla \chi_i|^2)^{-1} r^2\;dx+\|(p_h-w_H)(0)\|_{L^2(D)}^2.
\end{align*}

\end{proof}

We assume that there exists a global function $F$ and a bounded constant $C$, $\int_D F^2\;dx\leq C$ such that
\begin{align*}
\int_{\omega_i} (H^{-2}\kappa |\nabla \chi_i|^2)^{-1} r^2\preceq F^2.
\end{align*}
Next, note that
\begin{align*}
\int_{\omega_i} \kappa \nabla (p_h-w_H)^2\;dx \preceq \int_{\omega_i} \kappa |\nabla p_h|^2\;dx.
\end{align*}
Then we have the following convergence error estimate
\begin{align*}
\int_0^T \int_{D} \kappa |\nabla (p_h-p_H)|^2\preceq \int_0^T(\frac{1}{\Lambda_*})^{m+1}\int_D \kappa (\nabla p_h)^2\;dx+\int_0^T\Big((\Lambda_*)^m(\frac{1-\Lambda_*^{-m}}{\Lambda_*-1})+1\Big)H^2 \int_D F^2\;dx.
\end{align*}
If we take $m=-\frac{\text{log}(H))}{\text{log}(\Lambda_*)}$, then we can get
\begin{align*}
\int_0^T \int_{D} \kappa |\nabla (p_h-p_H)|^2\preceq \int_0^T \frac{H}{\Lambda_*}\int_D \Big(\kappa |\nabla p_h|^2\;dx+1\Big).
\end{align*}

Now we will present the error estimate for $p_h-w_H$ based on $V_2^{\text{snap}}$. First, we note that both $p_h$ and $w_H$ are harmonic functions according to the definitions of $p_h$ (cf. \eqref{eq:ph}) and $w_H$ (cf. \eqref{eq:wH}).

\begin{lemma}\label{eq:inverse}
Let $ p_h\in V_h$ be the fine scale solution obtained from \eqref{eq:fine} and $w_H$ be an arbitrary function belonging to $V^{\textnormal{off}}_H$. Then there holds
\begin{align*}
\int_{\omega_i}\chi_i^2 \kappa |\nabla (p_h-w_H)|^2\;dx\preceq H^{-2}\int_{\omega_i} \tilde{\kappa}(p_h-w_H)^2\;dx.
\end{align*}

\end{lemma}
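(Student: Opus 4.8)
The plan is to convert the stated bound into a Caccioppoli (reverse Poincar\'e) inequality, using the harmonicity that the excerpt has already recorded. Because the $V_2^{\text{snap}}$ offline basis $\{\psi_k^{i,\text{off}}\}$ is assembled from harmonic extensions, the representations \eqref{eq:ph} and \eqref{eq:wH} show that both $p_h$ and $w_H$ are $\rho(p_0)\kappa$-harmonic on $\omega_i$, so the error $e:=p_h-w_H$ satisfies $\nabla\cdot(\rho(p_0)\kappa\nabla e)=0$ on $\omega_i$ in the weak sense. The estimate is then exactly an inverse-type bound: for a harmonic function the gradient cannot oscillate, so its weighted energy is controlled by the $L^2$ mass with only the coarse factor $H^{-2}$ rather than the fine-grid factor $h^{-2}$. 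The right tool is the Caccioppoli argument with the partition-of-unity weight $\chi_i^2$.

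Concretely, I would test the weak harmonic identity against $\chi_i^2 e$. Since $\chi_i$ is a partition-of-unity function supported in $\omega_i$ with $\chi_i=0$ on $\partial\omega_i$, the boundary contribution drops in the integration by parts, and one is left with
\[
0=\int_{\omega_i}\rho(p_0)\kappa\,\nabla e\cdot\nabla(\chi_i^2 e)\,dx
=\int_{\omega_i}\rho(p_0)\kappa\,\chi_i^2|\nabla e|^2\,dx+2\int_{\omega_i}\rho(p_0)\kappa\,\chi_i\,e\,\nabla\chi_i\cdot\nabla e\,dx .
\]
Moving the cross term to the right-hand side and applying Cauchy--Schwarz followed by Young's inequality lets me absorb a small multiple of $\int_{\omega_i}\rho(p_0)\kappa\,\chi_i^2|\nabla e|^2$ back into the left-hand side, which produces the core estimate $\int_{\omega_i}\rho(p_0)\kappa\,\chi_i^2|\nabla e|^2\,dx\preceq\int_{\omega_i}\rho(p_0)\kappa\,|\nabla\chi_i|^2\,e^2\,dx$.

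It then remains to recast the right-hand side in the advertised form. Using that $\rho(p_0)$ is bounded above and below, the left-hand side is comparable to $\int_{\omega_i}\chi_i^2\kappa|\nabla e|^2$. On the right, the pointwise bound $|\nabla\chi_i|^2\preceq H^{-2}$ for the coarse partition of unity gives $\rho(p_0)\kappa\,|\nabla\chi_i|^2\preceq H^{-2}\,\rho(p_0)\kappa$, and since $\sum_j|\nabla\chi_j|^2$ is bounded below by a multiple of $H^{-2}\ge 1$ we have $\rho(p_0)\kappa\le\tilde\kappa$ from the definition $\tilde\kappa=\rho(p_0)\kappa\sum_j|\nabla\chi_j|^2$. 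Combining these turns the core estimate into $H^{-2}\int_{\omega_i}\tilde\kappa\,e^2\,dx$, which is exactly the claim.

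The one point demanding genuine care is the harmonicity that underlies everything: the inequality is simply false for arbitrary fine-grid functions, where the sharp constant degrades to $h^{-2}$, so one must really invoke that $e$ lies in the span of the harmonic snapshot functions, as granted by \eqref{eq:ph}--\eqref{eq:wH}, and that $\chi_i^2 e$ is an admissible test function vanishing on $\partial\omega_i$. Once harmonicity and the boundary vanishing are secured, the Young's-inequality absorption and the bookkeeping with $|\nabla\chi_i|^2\sim H^{-2}$ and the definition of $\tilde\kappa$ are routine.
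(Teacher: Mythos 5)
Your Caccioppoli argument is correct and, in substance, supplies exactly the proof the paper leaves out: the paper's "proof" of this lemma is the single sentence that it "can be proved by proceeding analogously to Lemma~4.12 of \cite{Li19}," and the content of that cited lemma is precisely the weighted Caccioppoli (reverse Poincar\'e) inequality for $\rho(p_0)\kappa$-harmonic functions that you reconstruct. Your two key observations are the right ones: the $H^{-2}$ scaling genuinely requires the harmonicity granted by the representations \eqref{eq:ph}--\eqref{eq:wH} in the $V_2^{\text{snap}}$ case (for an arbitrary fine-grid function the inverse estimate only yields $h^{-2}$), and the test function $\chi_i^2(p_h-w_H)$ vanishes on $\partial\omega_i$, so the cross term can be absorbed by Young's inequality. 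Two small remarks. First, your final bookkeeping is more roundabout than necessary: since $|\nabla\chi_i|^2\le\sum_j|\nabla\chi_j|^2$ pointwise, the definition of $\tilde\kappa$ gives $\rho(p_0)\kappa|\nabla\chi_i|^2\le\tilde\kappa$ directly, and the factor $H^{-2}$ in the stated bound is then harmless slack (for $H\le 1$); no lower bound on $\sum_j|\nabla\chi_j|^2$ is needed. Second, a technicality that you, like the paper, pass over: the snapshots \eqref{eq:cg_snap_har} are computed as \emph{discrete} harmonic functions on the fine grid, and $\chi_i^2(p_h-w_H)$ is not a fine-grid function, so a fully rigorous argument must test with its fine-grid interpolant and control the resulting superapproximation terms (or work with the continuous harmonic extension, as the formal statement of \eqref{eq:cg_snap_har} suggests). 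This is standard and does not affect the conclusion.
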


\begin{proof}
It can be proved by proceeding analogously to Lemma~4.12 of \cite{Li19}, which is thus omitted.

\end{proof}

\begin{lemma}\label{thm:error2}

Let $ p_h\in V_h$ be the fine scale solution obtained from \eqref{eq:fine} and $w_H$ be an arbitrary function belonging to $V^{\textnormal{off}}_H$. Then the following error estimate holds
\begin{align*}
\|\kappa^{1/2}\nabla (p_h-w_H)\|_{L^2(D)}^2\preceq \max_{i=1,\cdots,N_S}(H^{-2}(\lambda_{L_i+1}^{\omega_i})^{-1})\|(\rho(p_0)\kappa)^{\frac{1}{2}}\nabla p_h\|_{L^2(D)}^2.
\end{align*}

\end{lemma}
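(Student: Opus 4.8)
The plan is to localize the global energy error to the coarse neighborhoods through the partition of unity, to estimate each local contribution by pairing the spectral decay coming from the offline eigenproblem \eqref{eq:spec-cg} with the inverse inequality of Lemma~\ref{eq:inverse}, and then to reassemble using the finite overlap of the family $\{\omega_i\}$. Throughout I would exploit that $\rho(p_0)$ is bounded above and below by fixed positive constants on $D$ (since $p_0$ is a fixed bounded datum and $\rho$ is a positive exponential), so the weight $\rho(p_0)$ may be freely inserted into or removed from any quantity inside a $\preceq$.

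I would start from the eigen-expansion. On each $\omega_i$ the offline functions are, after normalization, orthogonal in the two forms associated with \eqref{eq:spec-cg}, namely $\int_{\omega_i}\rho(p_0)\tilde{\kappa}\,\psi_k^{i,\text{off}}\psi_l^{i,\text{off}}=\delta_{kl}$ and $\int_{\omega_i}\rho(p_0)\kappa\,\nabla\psi_k^{i,\text{off}}\!\cdot\!\nabla\psi_l^{i,\text{off}}=\lambda_k^{\omega_i}\delta_{kl}$. Writing $p_h$ as in \eqref{eq:ph} and defining the local tail $z_i:=\sum_{k=L_i+1}^{J_i}c_{k,\omega_i}\psi_k^{i,\text{off}}$, the ordering of the eigenvalues gives the spectral estimate
$$
\int_{\omega_i}\rho(p_0)\tilde{\kappa}\,z_i^2\le\frac{1}{\lambda_{L_i+1}^{\omega_i}}\int_{\omega_i}\rho(p_0)\kappa|\nabla z_i|^2\le\frac{1}{\lambda_{L_i+1}^{\omega_i}}\int_{\omega_i}\rho(p_0)\kappa|\nabla p_h|^2,
$$
the last step being the energy-minimizing property of the truncation. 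Choosing the global test function $w_H=\sum_i\chi_i\sum_{k\le L_i}c_{k,\omega_i}\psi_k^{i,\text{off}}\in V_H^{\text{off}}$ as in \eqref{eq:wH} and invoking $\sum_i\chi_i=1$ then yields the decomposition $p_h-w_H=\sum_i\chi_i z_i$.

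Next I would localize the energy. The finite overlap of the neighborhoods gives $\int_D\kappa|\nabla(p_h-w_H)|^2\preceq\sum_i\int_{\omega_i}\kappa|\nabla(\chi_i z_i)|^2$, and expanding the product gradient produces the two terms $\int_{\omega_i}\kappa|\nabla\chi_i|^2 z_i^2$ and $\int_{\omega_i}\chi_i^2\kappa|\nabla z_i|^2$. For the first I use $\kappa|\nabla\chi_i|^2\le\kappa\sum_j|\nabla\chi_j|^2=\tilde{\kappa}/\rho(p_0)$ followed by the spectral estimate, which bounds it by $\preceq(\lambda_{L_i+1}^{\omega_i})^{-1}\int_{\omega_i}\rho(p_0)\kappa|\nabla p_h|^2$. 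For the second I apply the inverse inequality of Lemma~\ref{eq:inverse} to the harmonic tail $z_i$, giving $\int_{\omega_i}\chi_i^2\kappa|\nabla z_i|^2\preceq H^{-2}\int_{\omega_i}\tilde{\kappa}z_i^2$, and then close once more with the spectral estimate to reach $\preceq H^{-2}(\lambda_{L_i+1}^{\omega_i})^{-1}\int_{\omega_i}\rho(p_0)\kappa|\nabla p_h|^2$. Since $H^{-2}$ dominates the $O(1)$ factor of the first term, each neighborhood contributes $\preceq H^{-2}(\lambda_{L_i+1}^{\omega_i})^{-1}\int_{\omega_i}\rho(p_0)\kappa|\nabla p_h|^2$.

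Finally I would sum over $i$, pull out the factor $\max_{i}\big(H^{-2}(\lambda_{L_i+1}^{\omega_i})^{-1}\big)$, and use finite overlap a second time to replace $\sum_i\int_{\omega_i}\rho(p_0)\kappa|\nabla p_h|^2$ by $\|(\rho(p_0)\kappa)^{1/2}\nabla p_h\|_{L^2(D)}^2$, which is exactly the claimed bound. I expect the main obstacle to be keeping the weighted norms consistent across the two reductions: the mass side of \eqref{eq:spec-cg} carries $\tilde{\kappa}$ rather than $\kappa$, and this $\tilde{\kappa}$ is precisely what appears on the right of Lemma~\ref{eq:inverse}, so the spectral estimate and the inverse inequality interlock only when $\tilde{\kappa}$, $\kappa|\nabla\chi_i|^2$, and the $\rho(p_0)$ weights are all tracked carefully; checking that Lemma~\ref{eq:inverse} indeed applies to the harmonic tails $z_i$ is the other delicate point.
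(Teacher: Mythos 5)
Your proposal is correct and follows essentially the same route as the paper: the same decomposition $p_h-w_H=\sum_i\chi_i z_i$ into local spectral tails, the same splitting of $\kappa|\nabla(\chi_i z_i)|^2$ into a $|\nabla\chi_i|^2$ part and a $\chi_i^2|\nabla z_i|^2$ part handled by Lemma~\ref{eq:inverse}, and the same eigenvalue-tail estimate against $\|(\rho(p_0)\kappa)^{1/2}\nabla p_h\|_{L^2(D)}^2$. The only cosmetic difference is that the paper absorbs both gradient contributions into the single bound $H^{-2}\sum_i\int_{\omega_i}\tilde{\kappa}|p_h-w_H|^2$ before invoking the spectral decay, whereas you estimate the two terms separately and then note that the $H^{-2}$ factor dominates.
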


\begin{proof}

Recall that $p_h$ in each coarse neighborhood $\omega_i$ is defined by \eqref{eq:ph}, thereby
$p_h=\sum_{i=1}^{N_S}\chi_i\sum_{k=1}^{J_i} c_{k,\omega_i}\psi_{k}^{i,\text{off}}$ in $D$. We have
\begin{align*}
p_h-w_H = \sum_{i=1}^{N_S}\chi_i\sum_{k=L_i+1}^{J_i} c_{k,\omega_i}\psi_{k}^{i,\text{off}}.
\end{align*}
Employing the properties of partition of unity function (cf. \cite{Melenk96}) and Lemma~\ref{eq:inverse}, we can infer that
\begin{align*}
\|\kappa^{1/2}\nabla (p_h-w_H)\|_{L^2(D)}^2&\leq \sum_{i=1}^{N_S} \int_{\omega_i} \kappa|\nabla (\chi_i (p_h-w_H))|^2\;dx\\
&\leq \sum_{i=1}^{N_S} \int_{\omega_i}\kappa(\nabla \chi_i)^2 (p_h-w_H)^2\;dx+\sum_{i=1}^{N_S} \int_{\omega_i}\kappa\chi_i^2 \nabla (p_h-w_H)|^2\;dx\\
&\leq H^{-2}\sum_{i=1}^{N_S} \int_{\omega_i}\tilde{\kappa} |p_h-w_H|^2\;dx.
\end{align*}
Then we can deduce from the orthogonality of eigenfunctions and the spectral problem \eqref{eq:spec-cg} that
\begin{align*}
\int_{\omega_i} \rho(p_0)\kappa|\nabla p_h|^2\;dx=\sum_{j=1}^{J_i}c_{k,\omega_i}^2\lambda_j^{\omega_i}\int_{\omega_i}(\psi_{j}^{i,\text{off}})^2.
\end{align*}
Thus, we have
\begin{align*}
\int_{\omega_i}\tilde{\kappa} |p_h-w_H|^2\;dx&=\sum_{j=L_i+1}^{J_i} \int_{\omega_i}\tilde{\kappa} c_{j,\omega_i}^2(\psi_{j}^{i,\text{off}})^2\;dx=\sum_{j=L_i+1}^{J_i} (\lambda_j^{\omega_i})^{-1}\lambda_j^{\omega_i}\int_{\omega_i} \tilde{\kappa} c_{j,\omega_i}^2(\psi_{j}^{i,\text{off}})^2\\
&\leq (\lambda_{L_i+1}^{\omega_i})^{-1}\sum_{j=L_i+1}^{J_i} \lambda_j^{\omega_i}\int_{\omega_i} \tilde{\kappa} c_{j,\omega_i}^2(\psi_{j}^{i,\text{off}})^2\;dx\\
&\leq (\lambda_{L_i+1}^{\omega_i})^{-1}\|(\rho(p_0)\kappa)^{\frac{1}{2}}\nabla p_h\|_{L^2(D)}^2,
\end{align*}
thereby
\begin{align*}
\|\kappa^{1/2}\nabla (p_h-w_H)\|_{L^2(D)}^2\preceq\max_{i=1,\cdots,N_S}(H^{-2}(\lambda_{L_i+1}^{\omega_i})^{-1})\|(\rho(p_0)\kappa)^{\frac{1}{2}}\nabla p_h\|_{L^2(D)}^2.
\end{align*}
Hence the proof is completed.
%

%
%
%
%

\end{proof}

Using the spectral problem \eqref{eq:spec-cg}, we can obtain the following error estimates for both $V_1^{\text{snap}}$ and $V_2^{\text{snap}}$ (see also \cite{Galvis10})
\begin{align*}
\|p_h-w_H\|_{L^2(D)}^2\preceq\max_{i=1,\cdots,N_S}(\lambda_{L_i+1}^{\omega_i})^{-1})\|(\rho(p_0)\kappa)^{\frac{1}{2}}\nabla p_h\|_{L^2(D)}^2,\\
\|(p_h-w_H)_t\|_{L^2(D)}^2\preceq \max_{i=1,\cdots,N_S}(\lambda_{L_i+1}^{\omega_i})^{-1})\|(\rho(p_0)\kappa)^{\frac{1}{2}}\nabla (p_h)_t\|_{L^2(D)}^2.
\end{align*}

Now we can state the main result of this section by combining the preceding estimates.
\begin{theorem}
Let $ p_h\in V_h$ be the fine scale solution obtained from \eqref{eq:fine} and $p_H\in V^{\textnormal{off}}_H$ be the GMsFEM solution obtained from \eqref{eq:coarse-sol}. If we take $m=-\frac{\textnormal{log}(H)}{\textnormal{log}(\Lambda_*)}$, then the following error estimate holds for $V_1^{\textnormal{snap}}$
\begin{align*}
&\|(p_h-p_H)(t)\|_{L^2(D)}^2+\int_0^T \|(\frac{\kappa}{\mu})^{1/2}\nabla (p_h-p_H)\|_{L^2(D)}^2\\
&\preceq \int_0^T\Big(\frac{H}{\Lambda_*}
(\|(\rho(p_0)\kappa)^{\frac{1}{2}}\nabla p_h\|_{L^2(D)}^2+1)+\max_{i=1,\cdots,N_S}(\lambda_{L_i+1}^{\omega_i})^{-1}\|(\rho(p_0)\kappa)^{\frac{1}{2}}\nabla (p_h)_t\|_{L^2(D)}^2\Big)\;dt \\ &\;+\max_{i=1,\cdots,N_S}(\lambda_{L_i+1}^{\omega_i})^{-1}\|(\rho(p_0)\kappa)^{\frac{1}{2}}\nabla p_h\|_{L^2(D)}^2.
\end{align*}
In addition, we have the following error estimate for $V_2^{\textnormal{snap}}$
\begin{align*}
&\|(p_h-p_H)(t)\|_{L^2(D)}^2+\int_0^T \|(\frac{\kappa}{\mu})^{1/2}\nabla (p_h-p_H)\|_{L^2(D)}^2\\
&\preceq\int_0^T\Big(\max_{i=1,\cdots,N_S}H^{-2}(\lambda_{L_i+1}^{\omega_i})^{-1}
\|(\rho(p_0)\kappa)^{\frac{1}{2}}\nabla p_h\|_{L^2(D)}^2+\max_{i=1,\cdots,N_S}(\lambda_{L_i+1}^{\omega_i})^{-1}\|(\rho(p_0)\kappa)^{\frac{1}{2}}\nabla (p_h)_t\|_{L^2(D)}^2\Big)\;dt \\ &\;+\max_{i=1,\cdots,N_S}(\lambda_{L_i+1}^{\omega_i})^{-1}\|(\rho(p_0)\kappa)^{\frac{1}{2}}\nabla p_h\|_{L^2(D)}^2.
\end{align*}

\end{theorem}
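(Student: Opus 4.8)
The plan is to assemble the stated bound directly from Lemma~\ref{thm:error1} by feeding in the prepared estimates for $p_h-w_H$, after making the single decisive choice of $w_H$. Specifically, on each coarse neighborhood $\omega_i$ I take $w_H=\sum_{k=1}^{L_i} c_{k,\omega_i}\psi_k^{i,\textnormal{off}}$, the truncation of the expansion \eqref{eq:ph} to its first $L_i$ eigenmodes; this $w_H$ lies in $V_H^{\textnormal{off}}$, so Lemma~\ref{thm:error1} applies and its left-hand side is exactly the quantity appearing in the theorem. It then remains to control each term on the right-hand side of Lemma~\ref{thm:error1}: the gradient integral $\int_0^T\|(\kappa/\mu)^{1/2}\nabla(p_h-w_H)\|_{L^2(D)}^2$, the zeroth-order integral $\int_0^T\|p_h-w_H\|_{L^2(D)}^2$, the time-derivative integral $\int_0^T\|(p_h-w_H)_t\|_{L^2(D)}^2$, and the two endpoint terms at $t=0$ and at $t$.

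First I would treat the gradient integral, the one place where the two snapshot spaces diverge. For $V_1^{\textnormal{snap}}$ I invoke Lemma~\ref{lemma:V1snap} together with the standing assumption $\int_{\omega_i}(H^{-2}\kappa|\nabla\chi_i|^2)^{-1}r^2\preceq F^2$ and the crude bound $\int_{\omega_i}\kappa\nabla(p_h-w_H)^2\preceq\int_{\omega_i}\kappa|\nabla p_h|^2$; the choice $m=-\log(H)/\log(\Lambda_*)$ then collapses both the leading factor $(1/\Lambda_*)^{m+1}$ and the geometric factor $(\Lambda_*)^m(1-\Lambda_*^{-m})/(\Lambda_*-1)+1$ into a single rate, yielding the term $\frac{H}{\Lambda_*}(\|(\rho(p_0)\kappa)^{1/2}\nabla p_h\|_{L^2(D)}^2+1)$. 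For $V_2^{\textnormal{snap}}$ I instead use Lemma~\ref{thm:error2}, which rests on the inverse-type estimate of Lemma~\ref{eq:inverse} and on the harmonicity of both $p_h$ and $w_H$, and which delivers the factor $\max_{i}H^{-2}(\lambda_{L_i+1}^{\omega_i})^{-1}\|(\rho(p_0)\kappa)^{1/2}\nabla p_h\|_{L^2(D)}^2$.

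Next I would dispatch the remaining terms uniformly for both spaces using the two displayed spectral estimates immediately preceding the theorem, namely $\|p_h-w_H\|_{L^2(D)}^2\preceq\max_{i}(\lambda_{L_i+1}^{\omega_i})^{-1}\|(\rho(p_0)\kappa)^{1/2}\nabla p_h\|_{L^2(D)}^2$ and its time-differentiated counterpart. The time-derivative integral then produces the $\max_{i}(\lambda_{L_i+1}^{\omega_i})^{-1}\|(\rho(p_0)\kappa)^{1/2}\nabla(p_h)_t\|_{L^2(D)}^2$ contribution inside the time integral, while the endpoint term $\|(p_h-w_H)(t)\|_{L^2(D)}^2$ produces the final, outside-integral term $\max_{i}(\lambda_{L_i+1}^{\omega_i})^{-1}\|(\rho(p_0)\kappa)^{1/2}\nabla p_h\|_{L^2(D)}^2$. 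The zeroth-order integral is of the same form as, and is absorbed into, the gradient-integral contribution (immediately for $V_2^{\textnormal{snap}}$ since $H^{-2}\ge 1$, and for $V_1^{\textnormal{snap}}$ because the spectral rate there dominates). The initial term $\|(w_H-p_H)(0)\|_{L^2(D)}^2$ is controlled by choosing $p_H(0)$ as the coarse projection matching the truncation of $p_h(0)$, so that it too is dominated by the $t=0$ spectral estimate. Throughout, the weights $\kappa/\mu$, $\kappa$ and $\rho(p_0)\kappa$ differ only by the bounded, strictly positive constants $\mu$ and $\rho(p_0)$, so the induced norms are equivalent and the equivalence constants are swallowed by $\preceq$.

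The assembly itself is essentially bookkeeping, since every load-bearing inequality has already been established in the preceding lemmas; I therefore expect the main difficulty to be conceptual rather than computational. The first delicate point is verifying that the globally assembled truncation $w_H$ coincides, neighborhood by neighborhood and after multiplication by the partition of unity $\chi_i$, with precisely the object to which the local spectral and inverse estimates apply, so that Lemmas~\ref{lemma:V1snap}--\ref{thm:error2} may be summed over $i$ without loss. The second is the initial-data term $\|(w_H-p_H)(0)\|_{L^2(D)}^2$: unless $p_H(0)$ is chosen compatibly with the truncation of $p_h(0)$, this term survives as an extra, uncontrolled contribution absent from the statement, so pinning down that compatibility is the one place where genuine care is required.
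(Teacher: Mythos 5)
Your proposal follows exactly the route the paper takes: the theorem is obtained by combining Lemma~\ref{thm:error1} with the choice of $w_H$ as the truncated eigenfunction expansion \eqref{eq:wH}, invoking Lemma~\ref{lemma:V1snap} (with the assumption on $F$ and the choice of $m$) for $V_1^{\textnormal{snap}}$, Lemma~\ref{thm:error2} for $V_2^{\textnormal{snap}}$, and the two displayed spectral $L^2$ estimates for the remaining terms — indeed the paper gives no further argument beyond ``combining the preceding estimates.'' Your two flagged concerns (summability of the local estimates over the partition of unity, and compatibility of $p_H(0)$ with the truncation of $p_h(0)$ so that the initial term is controlled) are legitimate points the paper leaves implicit, and your proposed resolutions are the natural ones.
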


\section{Residual driven multiscale basis}\label{sec:online}
In practical reservoir applications, the source function may be singular.
In this case, only using permeability dependent local multiscale basis may yield
solutions that are not accurate near the source. One way to remedy this issue is to add residual driven basis functions (also named online basis) to the offline space, the residual driven bases
 are also defined on  coarse neighborhoods and contain global effects due to the global permeability field and source. The basic idea of constructing
residual driven multiscale basis is to solve a local zero Dirichlet boundary condition problem with carefully defined local residual as source, it can be constructed iteratively and starts from using the offline multiscale solution $p^{n,k}_{H}$.  Specifically, given the multiscale solution $p^{n,k}_{H}\in V_H^{\text{off}}$, for each neighborhood $\omega_i$ we define a
 local residual functional
$R_i^{n,k}$ on $V_h^i=V_h(\omega_i)$ as
\begin{equation}\label{eq:residual}
\begin{aligned}
R^{n,k}_i(v)=&\big(\phi\rho(p^{n,k}_{H}),v\big)_{\omega_i}-\big(\phi\rho(p^{n-1}_{H}),v\big)_{\omega_i}+\delta t^n\big(\frac{\kappa}{\mu}\nabla \rho(p^{n,k}_{H}),\nabla v\big)_{\omega_i}-
\\
&\delta t^n(q_h,v)_{\omega_i}-(\phi\rho(p^{n,k}_{H})\phi_i,v)_{\omega_i}-\delta t^n(\frac{\kappa}{\mu}\rho(p^{n,k}_{H})\nabla p^{n,k}_{H},\nabla v)_{\omega_i}.
\end{aligned}
\end{equation}
We  denote the local bilinear form $J^{n,k}_i(p,v)=(\phi\rho(p)p,v)_{\omega_i}+\delta t^n(\frac{\kappa}{\mu}\rho(p)\nabla p,\nabla v)_{\omega_i}$,
then the local residual driven basis $\phi_{\text{on}}^{n,k,i}\in V_i$ is obtained by solving
\begin{equation}
J^{n,k}_i(\phi_{\text{on}}^{n,k,i},v)=R^{n,k}_i(v),\quad \forall v \in V_i
\end{equation}
 with zero Dirichlet boundary condition, therefore, the solution to the above local problem is conforming.
We note this local online basis $\phi_{\text{on}}^{n,k,i}$ is  for the $k-$th
 Newton iteration at time $t^n$.
 Panel (f) of Figure \ref{fig:eig} shows an example of the online basis, which is computed in a local domain that includes the singular source, so we can see this
 online basis is like a singular function which demonstrates that the online basis
 can capture global information of the solution.

In practice, we will not compute online basis at each Newton iteration and each time step, instead
we choose to compute the online basis at the initial Newton iteration and reuse these basis functions at later Newton iterations.
We also note that one can perform the above step of computing online basis iteratively to get multiple online bases.

Now we are in a position to derive the a posteriori error estimates, which motivates the definition shown in \eqref{eq:residual}. For each $i=1,2,\cdots,N_c$, we let $P_i$ be the projection defined by
\begin{align*}
P_i v=\sum_{k=1}^{L_i} \Big(\int_{\omega_i} \tilde{\kappa} \psi_k^{\omega_i,\text{off}}\Big) \psi_k^{i,\text{off}}.
\end{align*}
To ease later analysis, we define the following norm
\begin{align*}
\|v\|_{V_i}^2=\|v\|_{L^2(\omega_i)}^2+\delta t^n\|(\frac{\kappa}{\mu})^{1/2}\nabla v\|_{L^2(\omega_i)}^2.
\end{align*}
The projection $P_i$ satisfies the following stability bound
\begin{align*}
\|\chi_i(P_i v)\|_{V_i}\leq C_{\text{stab}}^{\omega_i} \|v\|_{V_i},
\end{align*}
where the constant $C_{\text{stab}}^{\omega_i}=\max\{1,H^{-1}(\lambda_{L_i+1}^{\omega_i})^{-1/2}\}$. Moreover, the following convergence result holds (cf. \cite{Galvis10,Chung14})
\begin{align}
\|\chi_i(v-P_iv)\|_{V_i}&\leq C_{\text{conv}}^{\omega_i}(\lambda_{L_i+1}^{\omega_i})^{-1/2}\|v\|_{V_i},
\label{eq:convergence-interpolation}
\end{align}
where $C_1$ and $C_{\text{conv}}^{\omega_i}$ are uniform constants. We also define the projection $\Pi$ by $\Pi v=\sum_{i=1}^{N_S} \chi_i(P_iv)$. For the analysis, we let $C_{\text{stab}}=\max_{1\leq i\leq N_S} C_{\text{stab}}^{\omega_i}$ and $C_{\text{conv}}=\max_{1\leq i\leq N_S}C_{\text{conv}}^{\omega_i}$.


In the next theorem, we prove the a posteriori error estimate, which will guide the construction of online basis functions. To simplify the analysis, the proof presented below is based on the nonlinear problem directly without resorting to linearization and our (undisplayed) numerical experiments indicate that the indicator given below behaves similarly to the one shown in \eqref{eq:residual}.
\begin{theorem}
Let $p_h^n\in V_h$ denote the approximation solution of \eqref{eq:femapp} at $t^n$ and $p_H^n\in V_H^{\text{off}}$ denote the GMsFEM solution of the fully discrete scheme of \eqref{eq:coarse-sol} at $t^n$. Then
there exists a positive constant $C$ independent of the meshsize such that
\begin{align*}
&\|p_h^{N_t}-p_H^{N_t}\|_{L^2(D)}^2+\sum_{n=1}^{N_t} \delta t^n\|(\frac{\kappa}{\mu})^{1/2}\nabla (p_h^n-p_H^{n})\|_{L^2(D)}^2\leq C \Big(\sum_{n=1}^{N_t}\sum_{i=1}^{N_S} \|\widetilde{R}_i^n\|_{V_i^*}^2(\lambda_{L_i+1}^{\omega_i})^{-1}+\|p_h^0-p_H^0\|_{L^2(D)}^2\Big),
\end{align*}
where
\begin{align*}
\widetilde{R}_i^n(v)=\delta t^n\int_{\omega_i} q^nv\;dx-\int_{\omega_i}\phi (\rho(p_H^n)-\rho(p_H^{n-1}))v\;dx+\delta t^n\int_{\omega_i}\frac{\kappa}{\mu}\rho (p_H^n)\nabla p_H^n\cdot  \nabla v\;dx
\end{align*}
and the residual norm is defined by
\begin{align*}
\|\widetilde{R}_i^n\|_{V_i^*}=\sup_{v\in L^2(t_n,t_{n+1};H^1_0(\omega_i))}\frac{\widetilde{R}_i^n(v)}{\|v\|_{V_i}}.
\end{align*}

\end{theorem}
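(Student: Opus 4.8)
The plan is to run a fully discrete energy argument that mirrors the semi-discrete proof of Lemma~\ref{thm:error1}, replacing the time derivative by the backward Euler difference and closing with a discrete Gronwall inequality. First I would record the fully discrete residual equation. Subtracting the coarse scheme from the fine scheme \eqref{eq:femapp} and using that $p_h^n$ solves the fine problem exactly, one obtains, for every $v\in V_h$,
\[(\phi(\rho(p_h^n)-\rho(p_H^n)),v)-(\phi(\rho(p_h^{n-1})-\rho(p_H^{n-1})),v)+\delta t^n(\tfrac{\kappa}{\mu}(\rho(p_h^n)\nabla p_h^n-\rho(p_H^n)\nabla p_H^n),\nabla v)=\widetilde{R}^n(v),\]
where $\widetilde{R}^n$ is the global residual whose restriction to functions supported in $\omega_i$ coincides with $\widetilde{R}_i^n$, so that $\widetilde{R}^n(v)=\sum_i\widetilde{R}_i^n(\chi_i v)$ by the partition of unity. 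Since $p_H^n\in V_H^{\text{off}}$ solves the coarse problem, $\widetilde{R}^n$ enjoys the Galerkin orthogonality $\widetilde{R}^n(v_H)=0$ for all $v_H\in V_H^{\text{off}}$.

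Next I would test with $v=e^n:=p_h^n-p_H^n$ and write $g^n:=\rho(p_h^n)-\rho(p_H^n)$. On the right, orthogonality gives $\widetilde{R}^n(e^n)=\widetilde{R}^n(e^n-\Pi e^n)=\sum_i\widetilde{R}_i^n(\chi_i(e^n-P_ie^n))$, because $\Pi e^n=\sum_i\chi_i P_ie^n\in V_H^{\text{off}}$ and $e^n-\Pi e^n=\sum_i\chi_i(e^n-P_ie^n)$. Each $\chi_i(e^n-P_ie^n)$ vanishes on $\partial\omega_i$, hence lies in $H_0^1(\omega_i)$, so the residual dual norm and the convergence bound \eqref{eq:convergence-interpolation} yield
\[\widetilde{R}^n(e^n)\le\sum_i\|\widetilde{R}_i^n\|_{V_i^*}C_{\text{conv}}(\lambda_{L_i+1}^{\omega_i})^{-1/2}\|e^n\|_{V_i}\le C\Big(\sum_i\|\widetilde{R}_i^n\|_{V_i^*}^2(\lambda_{L_i+1}^{\omega_i})^{-1}\Big)^{1/2}\big(\|e^n\|_{L^2(D)}^2+\delta t^n\|(\tfrac{\kappa}{\mu})^{1/2}\nabla e^n\|_{L^2(D)}^2\big)^{1/2},\]
the last step being Cauchy--Schwarz in $i$ together with the finite overlap of the neighborhoods $\omega_i$.

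On the left, the diffusion term is treated as in Lemma~\ref{thm:error1}: splitting $\rho(p_h^n)\nabla p_h^n-\rho(p_H^n)\nabla p_H^n=\rho(p_H^n)\nabla e^n+(\rho(p_h^n)-\rho(p_H^n))\nabla p_h^n$, the positivity of $\rho$ produces a coercive $\delta t^n\|(\tfrac{\kappa}{\mu})^{1/2}\nabla e^n\|_{L^2(D)}^2$ contribution, while the remaining piece is absorbed by Young's inequality using $p_h^n\in W^{1,\infty}(D)$ and the boundedness of $\rho,\rho'$. The delicate term is the discrete time difference $(\phi(g^n-g^{n-1}),e^n)$, and I expect it to be the \emph{main obstacle}. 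Writing $g^n=\bar\rho'_n e^n$ with $\bar\rho'_n(x)=\int_0^1\rho'(p_H^n+se^n)\,ds\in[\rho_1,\rho_2]$ (the exponential law makes $\rho'=c\rho$ bounded above and below once pressures are bounded), a weighted Young inequality gives
\[(\phi(g^n-g^{n-1}),e^n)\ge B^n-(1+C\delta t^n)B^{n-1},\qquad B^n:=\tfrac12\int_D\phi\,\bar\rho'_n(e^n)^2\ \simeq\ \|e^n\|_{L^2(D)}^2,\]
where the factor $1+C\delta t^n$ stems from $\bar\rho'_{n-1}/\bar\rho'_n=1+O(\delta t^n)$, using the Lipschitz dependence of $\rho'$ and the time regularity $p_h^n-p_h^{n-1}=O(\delta t^n)$. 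This telescoping-up-to-$O(\delta t^n)$ of a genuinely nonlinear term is the technical heart of the argument.

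Finally I would combine the bounds at a fixed $n$ \emph{before} summing. The crucial subtlety is that the bare $\|e^n\|_{L^2(D)}^2$ produced by the residual pairing carries no $\delta t^n$ weight; it is absorbed into the coercive $B^n\simeq\|e^n\|_{L^2(D)}^2$ coming from the time difference by choosing the Young parameter small, and the $\delta t^n\|(\tfrac{\kappa}{\mu})^{1/2}\nabla e^n\|_{L^2(D)}^2$ part is absorbed into the diffusion coercivity. What remains is a one-step recursion
\[\|e^n\|_{L^2(D)}^2+\delta t^n\|(\tfrac{\kappa}{\mu})^{1/2}\nabla e^n\|_{L^2(D)}^2\le(1+C\delta t^n)\|e^{n-1}\|_{L^2(D)}^2+C\sum_i\|\widetilde{R}_i^n\|_{V_i^*}^2(\lambda_{L_i+1}^{\omega_i})^{-1},\]
valid for $\delta t^n$ small enough to absorb the lower-order $\delta t^n\|e^n\|_{L^2(D)}^2$ contributions. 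Summing over $n=1,\dots,N_t$ and invoking the discrete Gronwall inequality then delivers the asserted estimate, with the constant $C$ absorbing the resulting factor $e^{CT}$.
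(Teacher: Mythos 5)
Your proposal is correct and follows essentially the same route as the paper: test the fully discrete error equation with $e^n=p_h^n-p_H^n$, use Galerkin orthogonality to insert the partition-of-unity projection $\Pi$ and bound the residual pairing via the spectral estimate \eqref{eq:convergence-interpolation} with the factor $(\lambda_{L_i+1}^{\omega_i})^{-1/2}$, extract coercivity from the nonlinear time-difference and diffusion terms (moving the $(\rho(p_h^n)-\rho(p_H^n))\nabla p_H^n$ correction to the right-hand side), and close with the discrete Gronwall lemma. The only cosmetic difference is that you spell out the telescoping of the time-difference term via the averaged coefficient $\bar\rho'_n$, where the paper simply invokes the boundedness of $\rho$ and Young's inequality to reach the same one-step recursion.
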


\begin{proof}

Recall that the fully discrete scheme for \eqref{eq:fine} is written as follows
\begin{align}
(\frac{\phi (\rho(p_h^n)-\rho(p_h^{n-1})}{\delta t^n},v)+(\frac{\kappa}{\mu}\rho (p_h^n)\nabla p_h^n,\nabla v)=(q^n,v)\quad \forall v\in V_h\label{eq:ph-discrete}
\end{align}
and the fully discrete scheme for \eqref{eq:coarse-sol} by using backward Euler scheme can be written as follows
\begin{align}
(\frac{\phi (\rho(p_H^n)-\rho(p_H^{n-1})}{\delta t^n},v)+(\frac{\kappa}{\mu}\rho (p_H^n)\nabla p_H^n,\nabla v)=(q^n,v)\quad \forall v\in V_H^{\text{off}}.\label{eq:pH-discrete}
\end{align}
From the definition of $\rho$, we can easily verify that
\begin{align*}
(\phi(\rho(p_h^n)-\rho(p_H^{n})),p_h^n-p_H^{n})\geq C \|p_h^n-p_H^{n}\|_{L^2(D)}^2.
\end{align*}
The boundedness of $\rho$ and Young's inequality imply
\begin{align*}
(\phi(\rho(p_h^{n-1})-\rho(p_H^{n-1})),p_h^n-p_H^{n})\leq C\|p_h^{n-1}-p_H^{n-1}\|_{L^2(D)}^2+\epsilon_1\|p_h^n-p_H^{n}\|_{L^2(D)}^2.
\end{align*}
Combining the above two inequalities and taking $\epsilon_1$ small enough yield
\begin{align*}
&(\phi(\rho(p_h^n)-\rho(p_H^{n})),p_h^n-p_H^{n})
-(\phi(\rho(p_h^{n-1})-\rho(p_H^{n-1})),p_h^n-p_H^{n})\\
&\geq C_0(\|p_h^n-p_H^{n}\|_{L^2(D)}^2-\|p_h^{n-1}-p_H^{n-1}\|_{L^2(D)}^2).
\end{align*}
On the other hand, there holds
\begin{align*}
(\frac{\kappa}{\mu}(\rho (p_h^n)\nabla p_h^n-\rho (p_h^n)\nabla p_H^n),\nabla (p_h^n-p_H^{n}))\geq C\|(\frac{\kappa}{\mu})^{1/2}\nabla (p_h^n-p_H^{n})\|_{L^2(D)}^2.
\end{align*}
Therefore, we can obtain
\begin{equation}
\begin{split}
&\|p_h^n-p_H^{n}\|_{L^2(D)}^2-\|p_h^{n-1}-p_H^{n-1}\|_{L^2(D)}^2+\delta t^n\|\nabla (p_h^n-p_H^{n})\|_{L^2(D)}^2\\
& \preceq\Big((\phi(\rho(p_h^n)-\rho(p_H^{n})),p_h^n-p_H^{n})
-(\phi(\rho(p_h^{n-1})-\rho(p_H^{n-1})),p_h^n-p_H^{n})\\
&\;+\delta t^n(\frac{\kappa}{\mu}(\rho (p_h^n)\nabla p_h^n-\rho (p_h^n)\nabla p_H^n),\nabla (p_h^n-p_H^{n}))\Big)\\
&=\Big((\phi(\rho(p_h^n)-\rho(p_H^{n})),p_h^n-p_H^{n})
-(\phi(\rho(p_h^{n-1})-\rho(p_H^{n-1})),p_h^n-p_H^{n})\\
&\;+\delta t^n\Big((\frac{\kappa}{\mu}(\rho (p_h^n)\nabla p_h^n-\rho (p_H^n)\nabla p_H^n),\nabla (p_h^n-p_H^{n}))\\
&\;-(\frac{\kappa}{\mu}(\rho (p_h^n)\nabla p_H^n-\rho (p_H^n)\nabla p_H^n),\nabla (p_h^n-p_H^{n}))\Big)\Big)\\
&=\delta t^n\Big(\Big((q^n,z)-(\frac{\phi (\rho(p_H^n)-\rho(p_H^{n-1}))}{\delta t^n},z)-(\frac{\kappa}{\mu}\rho (p_H^n)\nabla p_H^n,\nabla z)\Big)\\
&\;-(\frac{\kappa}{\mu}(\rho (p_h^n)\nabla p_H^n-\rho (p_H^n)\nabla p_H^n),\nabla z)\Big),
\end{split}
\label{eq:error-right}
\end{equation}
where $z=p_h^n-p_H^{n}$ and we have used \eqref{eq:ph-discrete} in the last equality.

Now we will estimate the right hand side of \eqref{eq:error-right}. First, we have from \eqref{eq:pH-discrete}
\begin{align*}
&\delta t^n\Big((q^n,z)-(\frac{\phi (\rho(p_H^n)-\rho(p_H^{n-1}))}{\delta t^n},z)-(\frac{\kappa}{\mu}\rho (p_H^n)\nabla p_H^n,\nabla z)\Big)\\
&=\delta t^n\Big((f,z-\Pi z)-(\frac{\phi (\rho(p_H^n)-\rho(p_H^{n-1}))}{\delta t^n},z-\Pi z)+(\frac{\kappa}{\mu}\rho (p_H^n)\nabla p_H^n,\nabla (z-\Pi z))\Big)\\
&=\sum_{i=1}^{N_S}\delta t^n\Big( \Big(\int_{\omega_i} f(z-P_i z)\chi_i\;dx-\int_{\omega_i} \frac{\phi (\rho(p_H^n)-\rho(p_H^{n-1}))}{\delta t^n}(z-P_iz)\chi_i\\
&\;+\int_{\omega_i}\frac{\kappa}{\mu}\rho (p_H^n)\nabla p_H^n\cdot  \nabla ((z-P_i z)\chi_i) \Big)\\
&=\sum_{i=1}^{N_S} \widetilde{R}_i^n(\chi_i(z-P_iz)).
\end{align*}
Then an application of \eqref{eq:convergence-interpolation} implies
\begin{align*}
\sum_{i=1}^{N_S} \widetilde{R}_i^n(\chi_i(z-P_iz))&\leq \sum_{i=1}^{N_S} \|\widetilde{R}_i^n\|_{V_i^*}\|\chi_i(z-P_iz)\|_{V_i}\leq C_{\text{conv}}\sum_{i=1}^{N_S} \|\widetilde{R}_i^n\|_{V_i^*}(\lambda_{L_i+1}^{\omega_i})^{-1/2}\|z\|_{V_i}.
\end{align*}
The last term on the right hand side of \eqref{eq:error-right} can be bounded by
\begin{align*}
\delta t^n(\frac{\kappa}{\mu}(\rho (p_h^n)\nabla p_H^n-\rho (p_H^n)\nabla p_H^n),\nabla (p_h^n-p_H^{n}))\preceq\delta t^n\|\rho(p_h^n)-\rho(p_H^n)\|_{L^2(D)}\|(\frac{\kappa}{\mu})^{1/2}\nabla (p_h^n-p_H^{n})\|_{L^2(D)}.
\end{align*}
Combining the above estimates and using Young's inequality, then we can get by summing over $n$
\begin{align*}
\|p_h^{N_t}-p_H^{N_t}\|_{L^2(D)}^2+\sum_{n=1}^{N_t}\delta t^n \|(\frac{\kappa}{\mu})^{1/2}\nabla (p_h^n-p_H^{n})\|_{L^2(D)}^2&\leq C\Big(\sum_{n=1}^{N_t}\sum_{i=1}^{N_S} \|\widetilde{R}_i^n\|_{V_i^*}^2(\lambda_{L_i+1}^{\omega_i})^{-1}
+\sum_{n=1}^{N_t}\delta t^n\|p_h^n-p_H^n\|_{L^2(D)}^2\\
&\;+\|p_h^0-p_H^0\|_{L^2(D)}^2\Big).
\end{align*}
Thereby, an application of the discrete Gronwall lemma yields
\begin{align*}
&\|p_h^{N_t}-p_H^{N_t}\|_{L^2(D)}^2+\sum_{n=1}^{N_t} \delta t^n\|(\frac{\kappa}{\mu})^{1/2}\nabla (p_h^n-p_H^{n})\|_{L^2(D)}^2\leq C \Big(\sum_{n=1}^{N_t}\sum_{i=1}^{N_S} \|\widetilde{R}_i^n\|_{V_i^*}^2(\lambda_{L_i+1}^{\omega_i})^{-1}+\|p_h^0-p_H^0\|_{L^2(D)}^2\Big).
\end{align*}
Therefore, the proof is completed.

%
%
%

\end{proof}

\begin{remark}
To save the computational costs, we will not update the online basis at each Newton iteration and each time step, but infrequent update will be tested.

\end{remark}

\section{Numerical experiments}\label{sec:na}
In this section, we assess the performances of the multiscale method
with some representative examples.
We are particularly interested in evaluating the accuracy and the CPU time reduction of the GMsFEM, we will study the influence of  adding offline and online basis and updating the online basis. To this end, we consider 3 permeability fields shown in Figure \ref{fig:models}.
$K_1$ and $K_2$ are three high-contrast models that are composed by long
channels and inclusions, the values in  blank regions of $\kappa_1$ and $\kappa_2$ are $10^5$ millidarcys, while in other regions, the values are $10^9$ millidarcys.  $K_3$ is the first 30 layers of the famous
SPE10 dataset \cite{spe10} which is widely used in reservoir simulation community to test multiscale methods.

In all  numerical experiments, two types of boundary conditions and sources combinations
are adopted. One is  full zero Neumann boundary condition, then
 the initial pressure field $p_0$ is homogeneous with a value of $2.16\times 10^7$ Pa. There are four vertical injectors in the corners and one sink in the middle of the domain to drive the flow.
Another type of boundary condition we consider is a combination of zero Neumann and nonzero Dirichlet boundary condition \cite{wang2014algebraic}. More specifically,
we impose zero Neumann boundary condition on boundaries of plane $xy$ and $xz$,
and let $p=2.16\times 10^7$ Pa in the first $yz$ plane and $p=2.00\times 10^7$ Pa in the last $yz$ plane for all time instants,
no extra source is imposed and the flow will be driven by the pressure difference,
the initial pressure field  $p_0$ linearly decreases along the $x$ axis and is fixed in the $yz$ plane.

In all numerical tests, we let viscosity $\mu=5$ cP, porosity $\phi=500$, fluid compressibility
$c=1.0\times 10^{-8}$ $\text{1/Pa}$, the reference pressure $p_\text{ref}=2.00\times 10^7$ Pa,
the reference density $\rho_\text{ref}=850$ kg$/$m$^3$.
The grid size and simulation time settings for each test model are shown in Table \ref{ta:setting}.

In all tables shown below, ``Nb'' is the  number of
local multiscale basis , ``$x+y$'' means $x$ offline basis and $y$ online basis. ``$T_\text{basis}$'' is the total CPU time (in seconds)  for computing the offline and online basis and forming the projection matrix $R$.  ``$T_{\text{ass}}$'' records the total CPU time of forming matrix $J^{n,k}$ and vector $f^{n,k}$.
``$T_{\text{solve}}$'' represents the total  CPU time to solve the linear system with direct solver.
The tolerances for Newton iteration is $10^{-6}$.
To quantify the error, we calculate the relative $L^2$ and $H^1$ error between 
GMsFEM solution and the reference fine-grid FEM solution.
All  computations are performed in a server with  Intel(R) Xeon(R) CPU E5-2687W v4 @ 3.00GHz and with Matlab, 32 cores are utilized for computing the eigenfunctions.

\subsection{Test results for $K_1$}
In Table \ref{ta:k1m}, we exhibit the computational performance comparison for $K_1$
with mixed boundary condition. It is clear that adding offline basis can definitely improve the accuracy of the GMsFEM, for example, the relative $L^2$ error are 2.89e-04 and 1.53e-04 with ``4+0'' and ``8+0'' bases, respectively. The dimension of the coarse system increases from 2916 to 5832, note that the dimension of the fine scale system is 274,625, therefore a huge reduction in the degrees of freedom can be achieved. The CPU time for solving the coarse linear system
is less than 10\% percent of fine-scale solve time even if 8 offline bases are utilized.
The CPU time for computing the offline bases can almost be neglected compared with
``$T_\text{ass}$'' and ``$T_\text{solve}$''. For example, it only takes 14.6s
to obtain 8 offline bases and corresponding projection matrix $R$, in contrast, the CPU time for forming matrix and solve linear system are 124.2 seconds and 194.1 seconds respectively. Note that only 32 cores are used here, if more cores are available the offline time may be further reduced.
Another thing we want to mention is that ``$T_\text{ass}$'' is almost the same for each case, this is because all assembling are
performed in fine-scale, it is possible to apply discrete empirical interpolation  method (DEIM\cite{chaturantabut2010nonlinear}) to reduce this computational cost.

We then investigate the effects of including the online bases. It can be observed that
if only using the initial online basis which means no update of the online basis is applied, the initial online basis shows no obvious improvement compared to the offline basis.
For example, the $L^2$ for the case ``4+0'' bases is 3.09e-04 while this value is
3.67e-04 if ``3+1'' bases are utilized. Using multiple online bases is more useful if we compare the errors of the cases ``4+2'' and ``6+0'' bases, but no significant improvement can be observed. However, updating the online basis can obviously yield more accurate solution. We can see that the $L^2$ error of using
``4+1'' bases with 3 updates  is only 1.11e-04, which is about one half of the corresponding error of the case ``Nb'' that is $6+0$.
The pressure profiles  obtained with the FEM and GMsFEM are displayed in
Figure \ref{fig:k1_mixedbd}, the dynamical behaviors of the distorted pressure field can be observed and
the  GMsFEM solution can capture almost all the details of the FEM solution.

The computational comparison results for the full Neumann boundary condition case
are shown in Table \ref{ta:k1n}. More Newton iterations are needed and thus the
``$T_\text{ass}$'' and ``$T_\text{solve}$'' are larger than the case of mixed
boundary condition. Again, more offline bases imply more accurate
GMsFEM solution, however the improvement is not obvious. As we can see the
$L^2$ error only decreases from 7.62e-3 to 5.45e-3 if the number of offline basis doubles from 4. Another easy noticeable difference between the full Neumann
 boundary  condition and mixed boundary condition is the online basis is more effective in reducing the error in the former case. For example, the $L^2$ error is
 1.33e-03 in the case of ``Nb'' that is ``5+1'' and no update is employed, in contrast,
 if only 6 offline basis is used the $L^2$ error is 6.22e-03. So with only one online basis, significant improvement is obtained at the expensive of slightly increasing ``$T_\text{basis}$''.
 Tremendous $H^1$ error reduction can also be observed and thus confirms the
 powerful advantages of online basis over offline basis for singular source problem.
However, more online basis fails to provide further tremendous improvements by comparing the error of the case ``3+2'' and ``4+1''. This is because the online basis are computed based on the initial solution and permeability fields, which change in the following time steps.  Updating the online basis based on the residual \eqref{eq:residual} during the time marching
also leads to more accurate GMsFEM solution as expected.
We show the pressure profiles comparison with singular source and
zero Neumann boundary condition in Figure \ref{fig:k1_neu}, we can see the flow transports from the injector to the sink. It is hard to find any difference between the GMsFEM solution and the reference solution, which indicates that our GMsFEM can yield very accurate solution.

\subsection{Test results for $K_2$}
The computational performance comparison   results with mixed boundary condition and full zero Neumann boundary condition for $K_2$ are displayed in Table \ref{ta:k2m} and \ref{ta:k2n}, respectively. Although the channels and inclusions in $K_2$ are
larger than $K_1$, however,
the test results for $K_2$ are quite similar as $K_1$. Specifically,
enriching the offline space can generate more accurate solution whatever the boundary condition is imposed. Besides, using only initial online basis will not accelerate the convergence of the GMsFEM too much for the mixed boundary condition case, updating the online basis will help. In addition, if  full zero Neumann boundary condition
is imposed, the error of GMsFEM solution with only offline basis is large, which can be alleviated a lot by adding the online basis. For example, the $L^2$ errors
are 1.39e-02 and 4.04e-03 if ``6+0'' and ``5+1'' bases are utilized, respectively.
We show the pressure profiles comparison in Figure \ref{fig:k2_mixedbd} and \ref{fig:k2_neu}. Again, we can observe distorted pressure fields due to the strong
heterogeneity of the permeability field, the GMsFEM can still nevertheless provide an accurate approximation to the fine-scale solution.

\subsection{Test results for $K_3$}
We summarized the test results for $K_3$ with two types of boundary conditions in
Table \ref{ta:k3m} and \ref{ta:k3n}.
For this test model, even for the mixed boundary condition case, the online basis shows higher efficiency  than the offline basis especially for the $H^1$ error, which can be verified by comparing the errors for cases ``3+1'' and ``8+0'' and other
scenarios. If full zero Neumann boundary condition is imposed, the online basis
shows powerful ability in reducing the large error caused by the singular source.
Moreover, tremendous CPU savings (``$T_\text{solve}$'') can also be observed.
By comparing the simulation results for all these three models, we can
see the convergence of online GMsFEM is almost independent of the media geometry.
The comparisons of the pressure profiles are exhibited in Figure \ref{fig:spe3d_mixedbd} and \ref{fig:spe3d_neubd}, which again demonstrates the GMsFEM is capable of
generating an accurate solution with multiscale behavior.

Finally, we summarize the major observations:
\begin{itemize}
	\item Adding offline basis or online basis can improve the accuracy
	 of the GMsFEM,
	 \item One or two online bases are enough to significantly reduce the possible large error of the offline GMsFEM if singular source is imposed,
	 \item Update the online basis can yield more accurate coarse-grid solution,
	 \item The GMsFEM can provide accurate solution with huge computational cost savings.
\end{itemize}
\begin{table}
	\centering \begin{tabular}{|c|c|c|c|c|c|c|c|c|}\hline
		Model & Fine grid &$h$ & $H$ &$\delta t$  & $T$\tabularnewline\hline
		$K_1$&$64^3$& 20  meters &$8h$  & 7 days& $20 \delta t$   \tabularnewline\hline
		$K_2$&$64^3$&   20 meters &$8h$   &7 days&$20 \delta t$   \tabularnewline\hline
		$K_3$&$220\times 30\times 80$& 20  meters  & $10h$ & 1 day& $20 \delta t$    \tabularnewline\hline		
	\end{tabular}
	\caption{Parameter settings in the performance tests, ``Fine Grid'' is the resolution of the permeability field, $h$ is the fine grid size, $H$ is the coarse grid size $\delta t$ is the time step, $T$ the total simulation time. }
	\label{ta:setting}
\end{table}

\begin{figure}
	\centering
	\subfigure[$K_1$]{\includegraphics[trim={0cm 0cm 0cm 0cm},clip,width=3in]{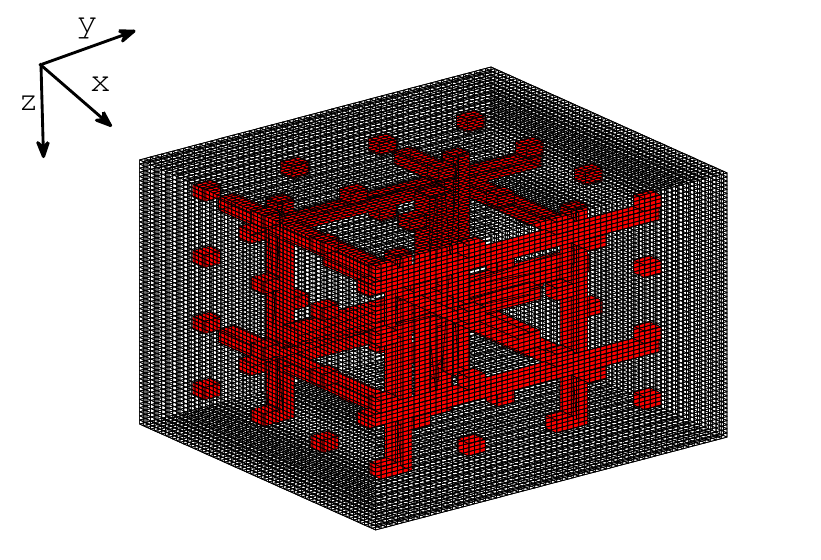}}
	\subfigure[$K_2$]{\includegraphics[trim={0cm 0cm 0cm 0cm},clip,width=2.7in]{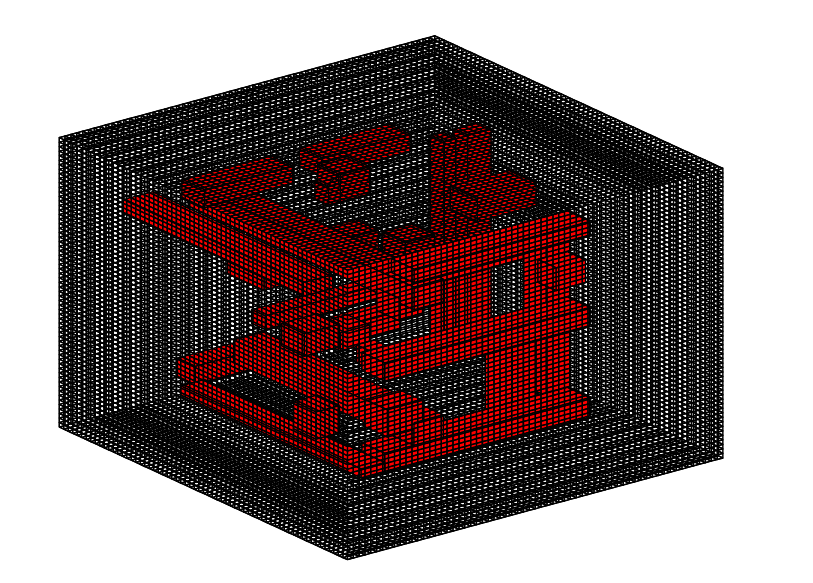}}	
	\subfigure[$K_3$ in log10 scale]{	\includegraphics[trim={1cm 6cm 1cm 6cm},clip,width=3in]{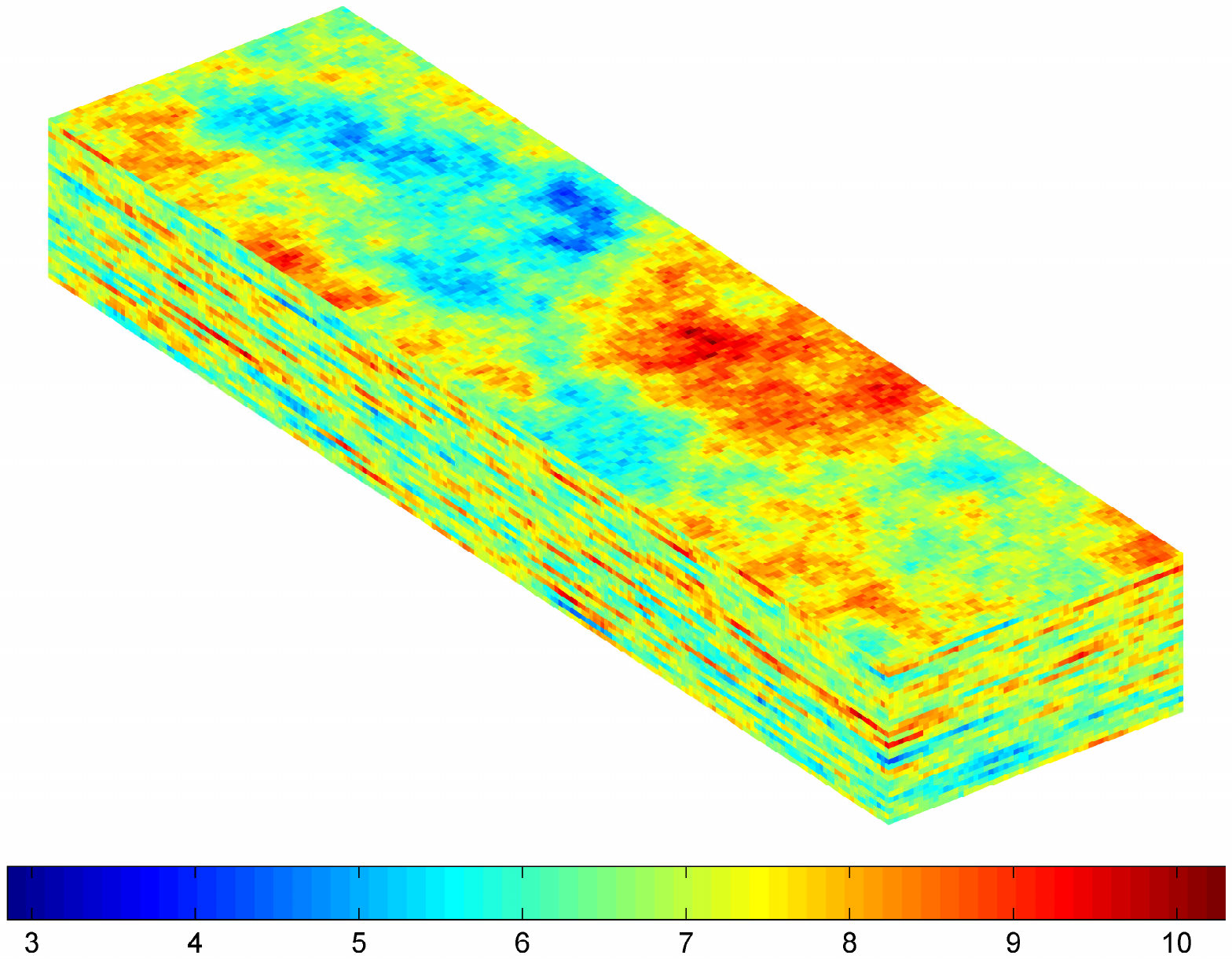}}
	\caption{Test permeability fields $K$. }
	\label{fig:models}
\end{figure}

\begin{figure}[!htb]
	\centering
	\subfigure[Reference solution at Day 35]{\includegraphics[trim={1cm 6cm 1cm 6cm},clip,width=3in]{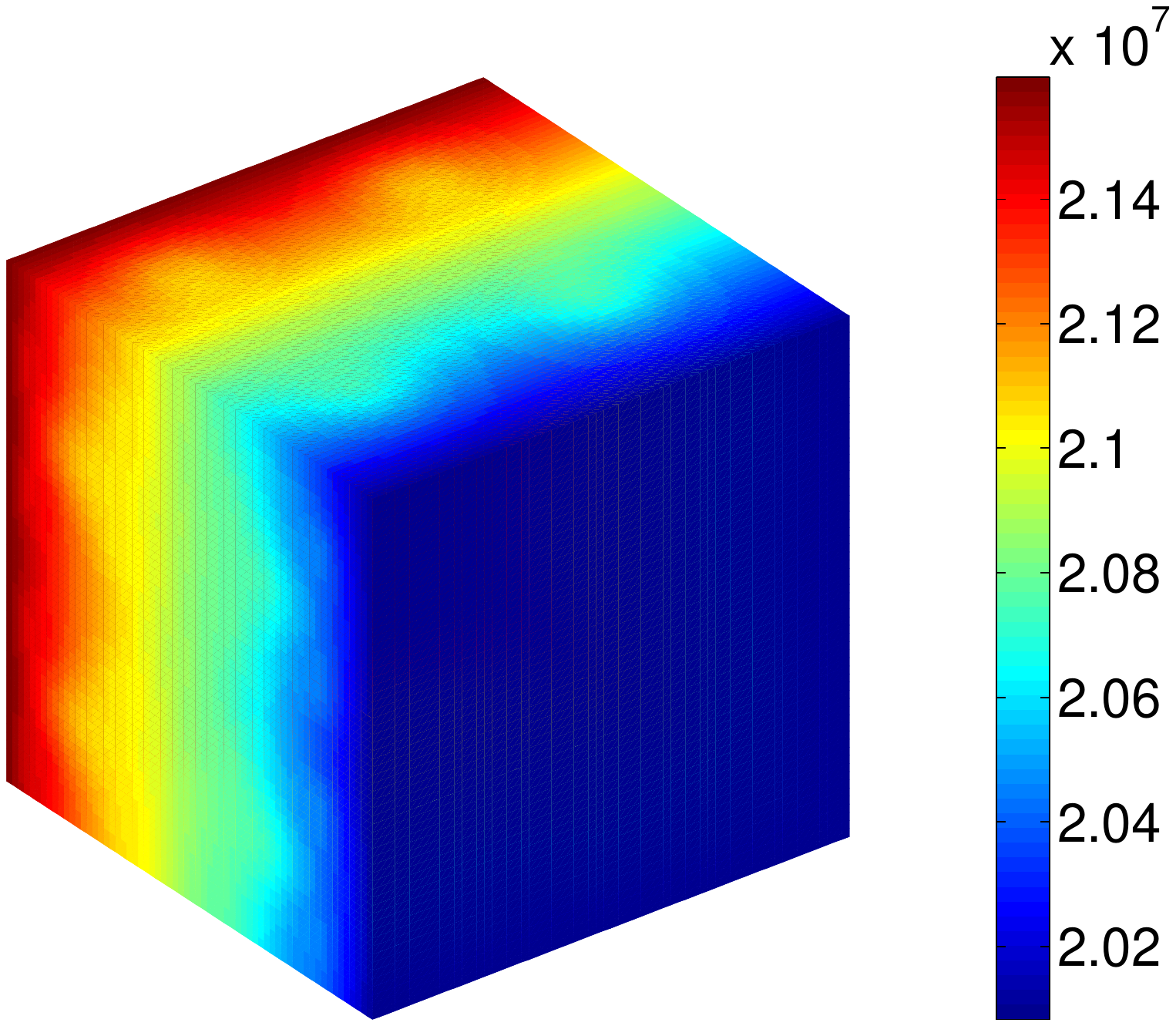}}
	\subfigure[GMsFEM solution with 5+1 bases at Day 35]{\includegraphics[trim={1cm 6cm 1cm 6cm},clip,width=3in]{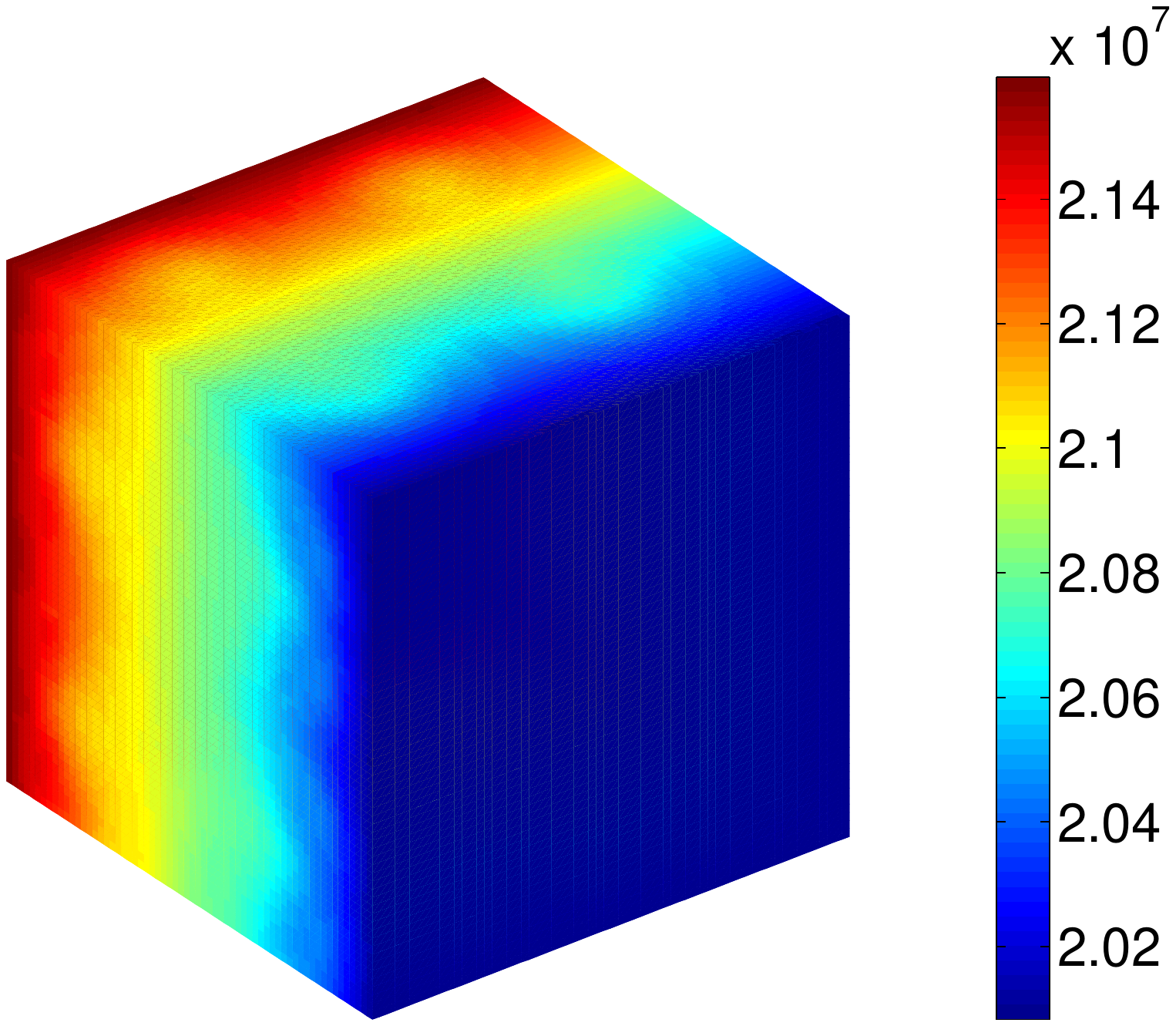}}	
	\subfigure[Reference solution at Day 105]{\includegraphics[trim={1cm 6cm 1cm 6cm},clip,width=3in]{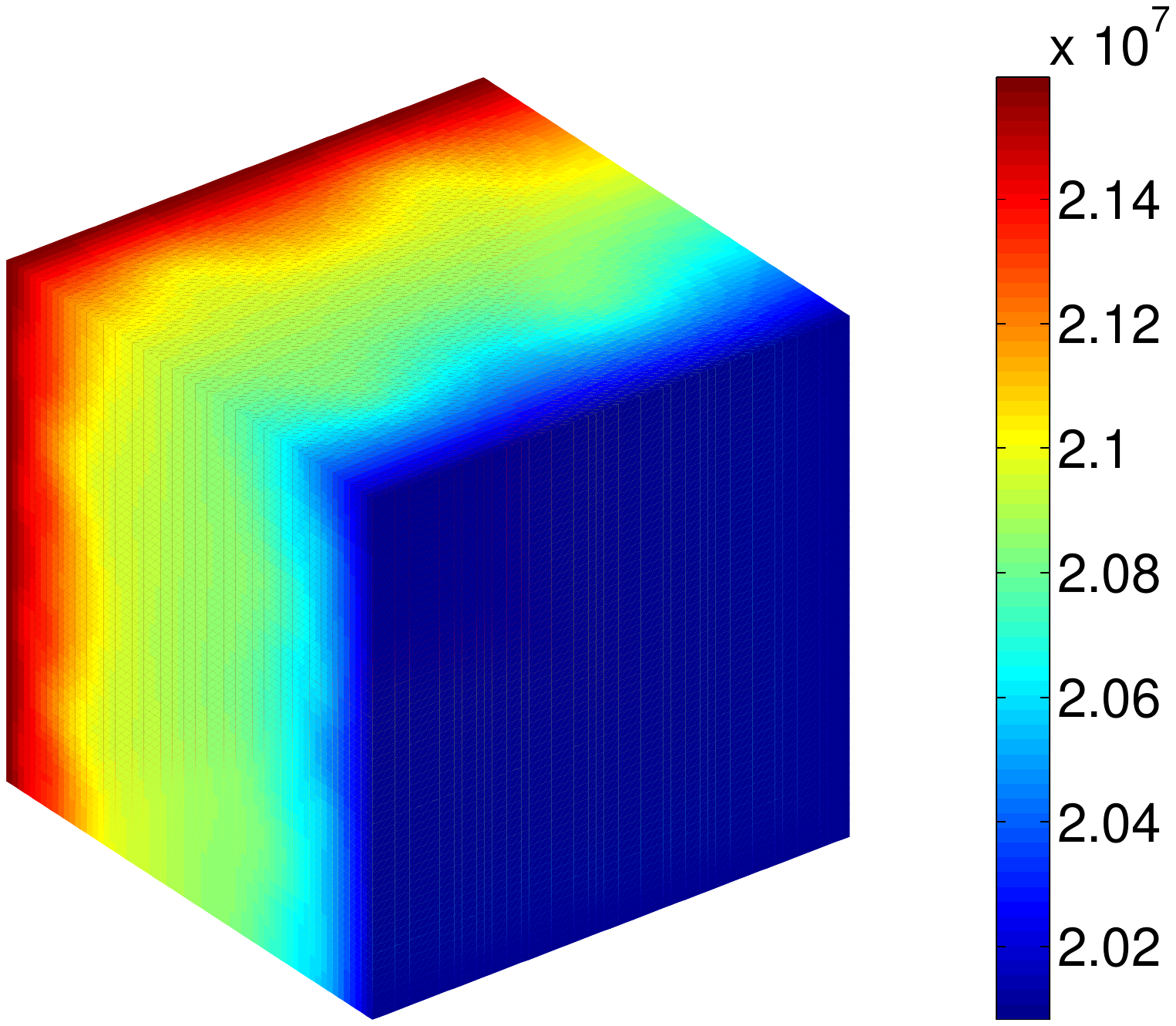}}	
	\subfigure[GMsFEM solution with 5+1 bases at Day 105]{\includegraphics[trim={1cm 6cm 1cm 6cm},clip,width=3in]{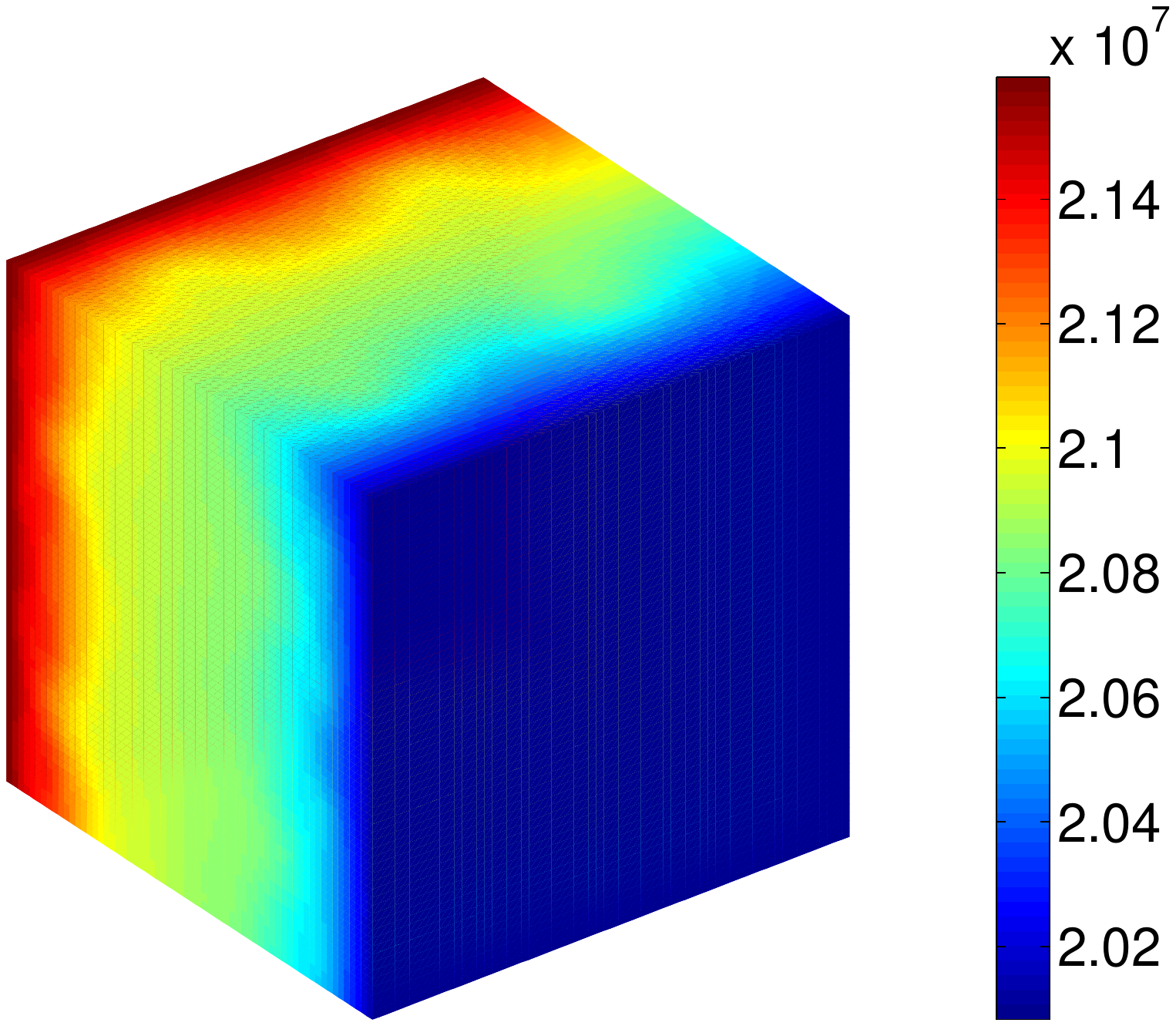}}
	\caption{Fine-scale reference solution and GMsFEM solution with 5+1 bases (3 updates) at different time instants, $K_1$, mixed boundary condition.}
	\label{fig:k1_mixedbd}
\end{figure}

\begin{figure}[!htb]
	\centering
	\subfigure[Reference solution at Day 35]{\includegraphics[trim={1cm 6cm 1cm 6cm},clip,width=3in]{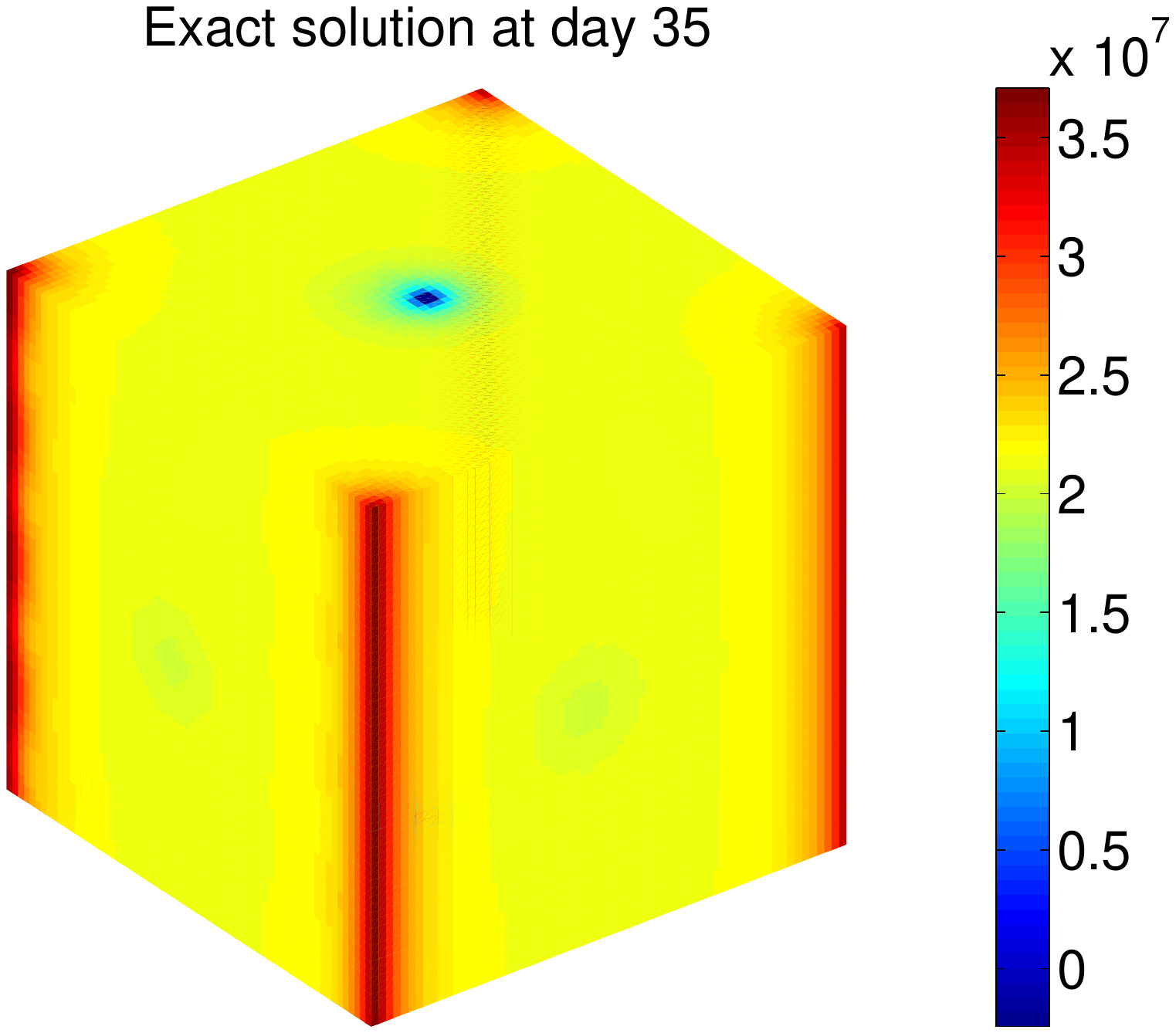}}
	\subfigure[GMsFEM solution with 5+1 bases at Day 35]{\includegraphics[trim={1cm 6cm 1cm 6cm},clip,width=3in]{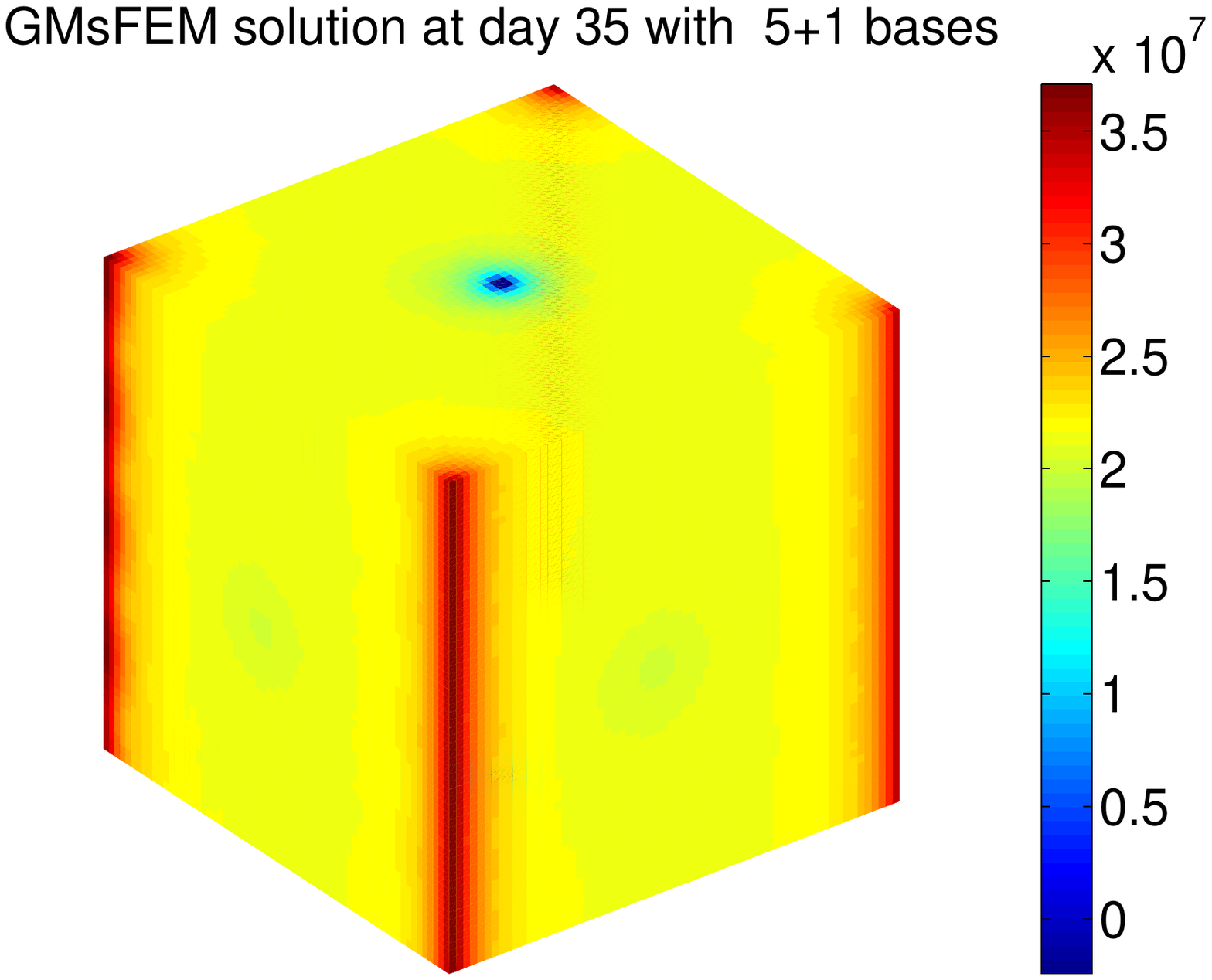}}	
	\subfigure[Reference solution at Day 105]{\includegraphics[trim={1cm 6cm 1cm 6cm},clip,width=3in]{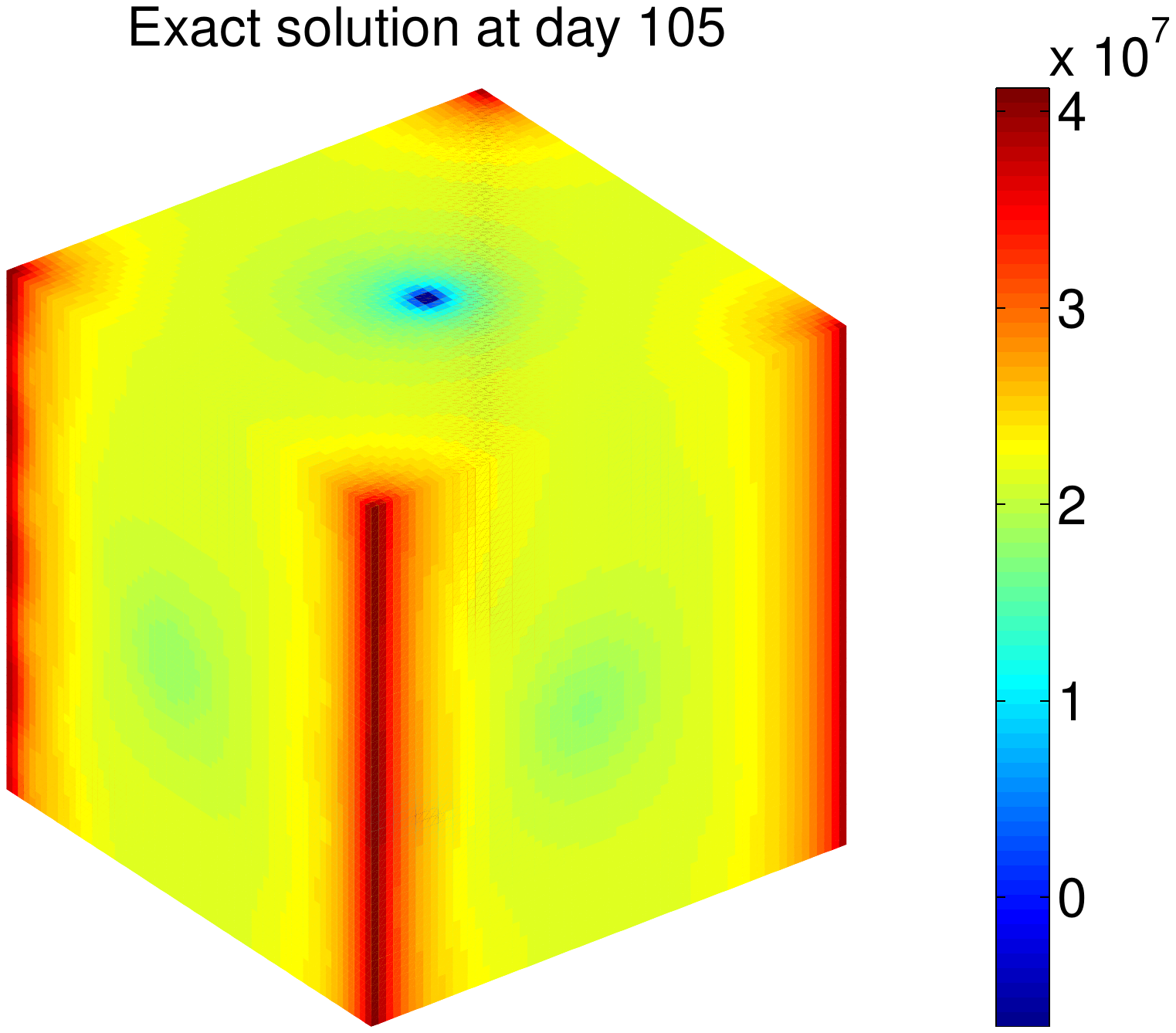}}	
	\subfigure[GMsFEM solution with 5+1 bases at Day 105]{\includegraphics[trim={1cm 6cm 1cm 6cm},clip,width=3in]{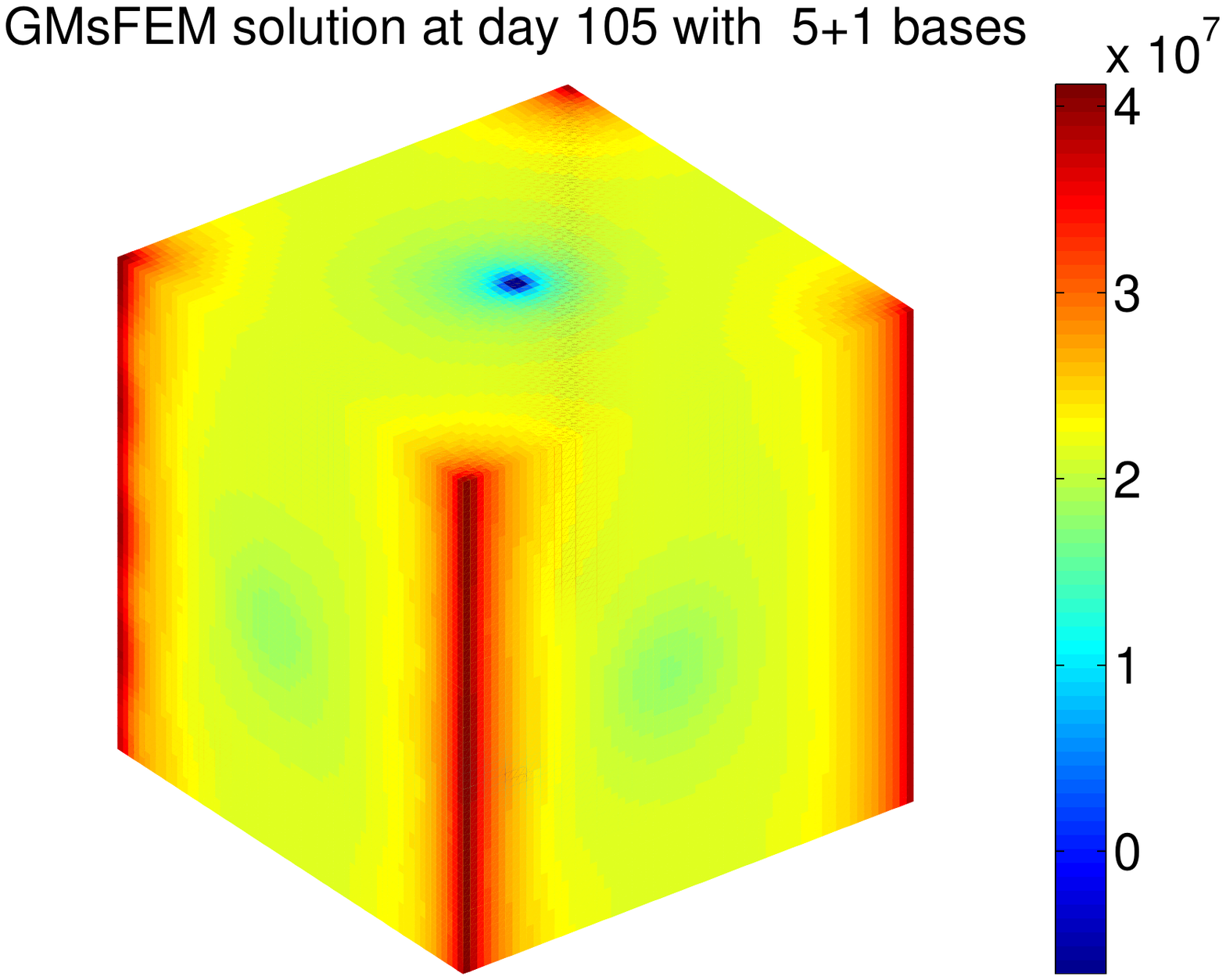}}		
	\caption{Fine-scale reference solution and GMsFEM solution with 5+1 bases (3 updates) at different time instants, $K_1$, full zero Neumann boundary condition.}
	\label{fig:k1_neu}
\end{figure}

\begin{figure}[!htb]
	\centering
	\subfigure[Reference solution at Day 35]{\includegraphics[trim={1cm 6cm 1cm 6cm},clip,width=3in]{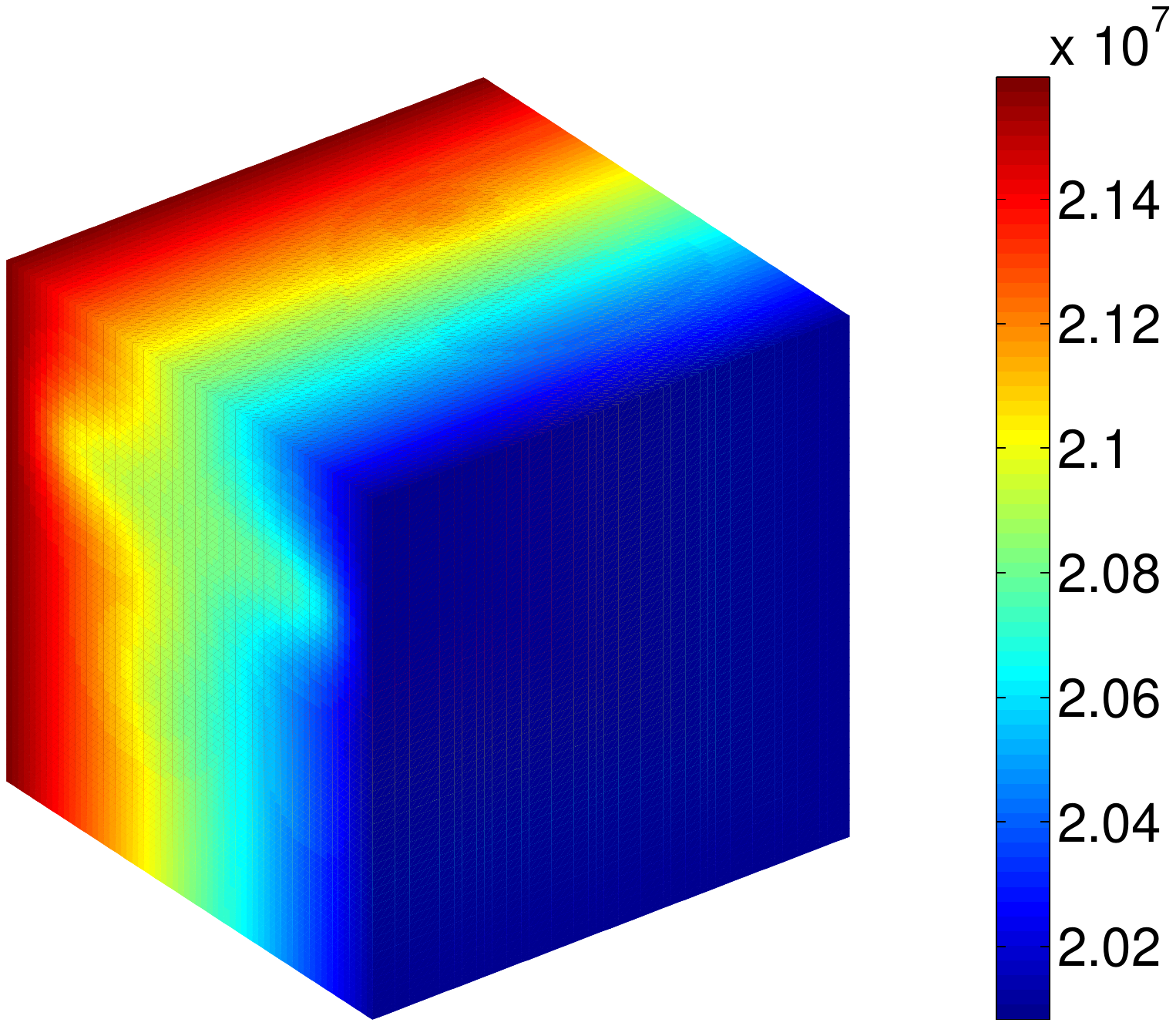}}
	\subfigure[GMsFEM solution with 5+1 bases at Day 35]{\includegraphics[trim={1cm 6cm 1cm 6cm},clip,width=3in]{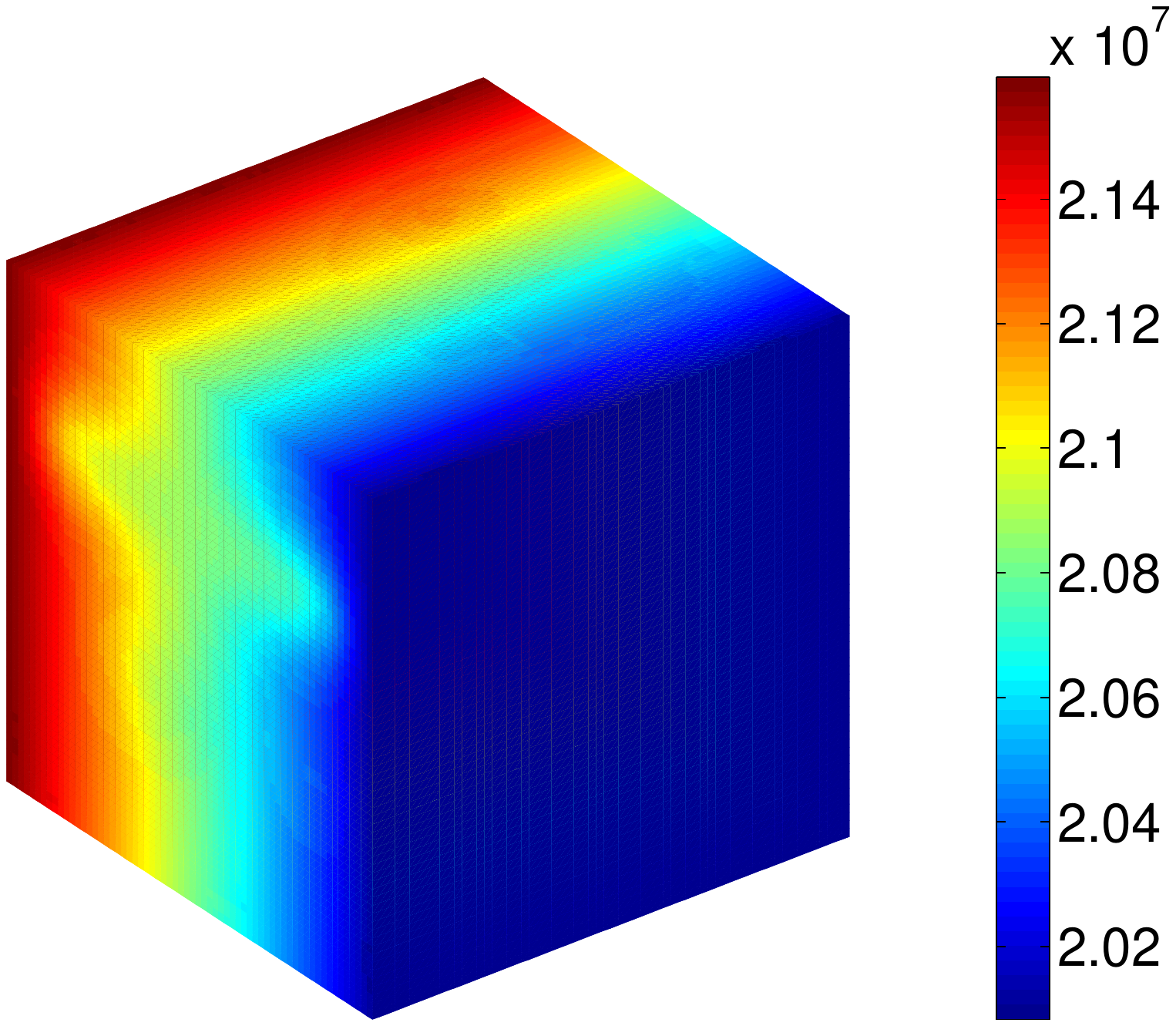}}	
	\subfigure[Reference solution at Day 105]{\includegraphics[trim={1cm 6cm 1cm 6cm},clip,width=3in]{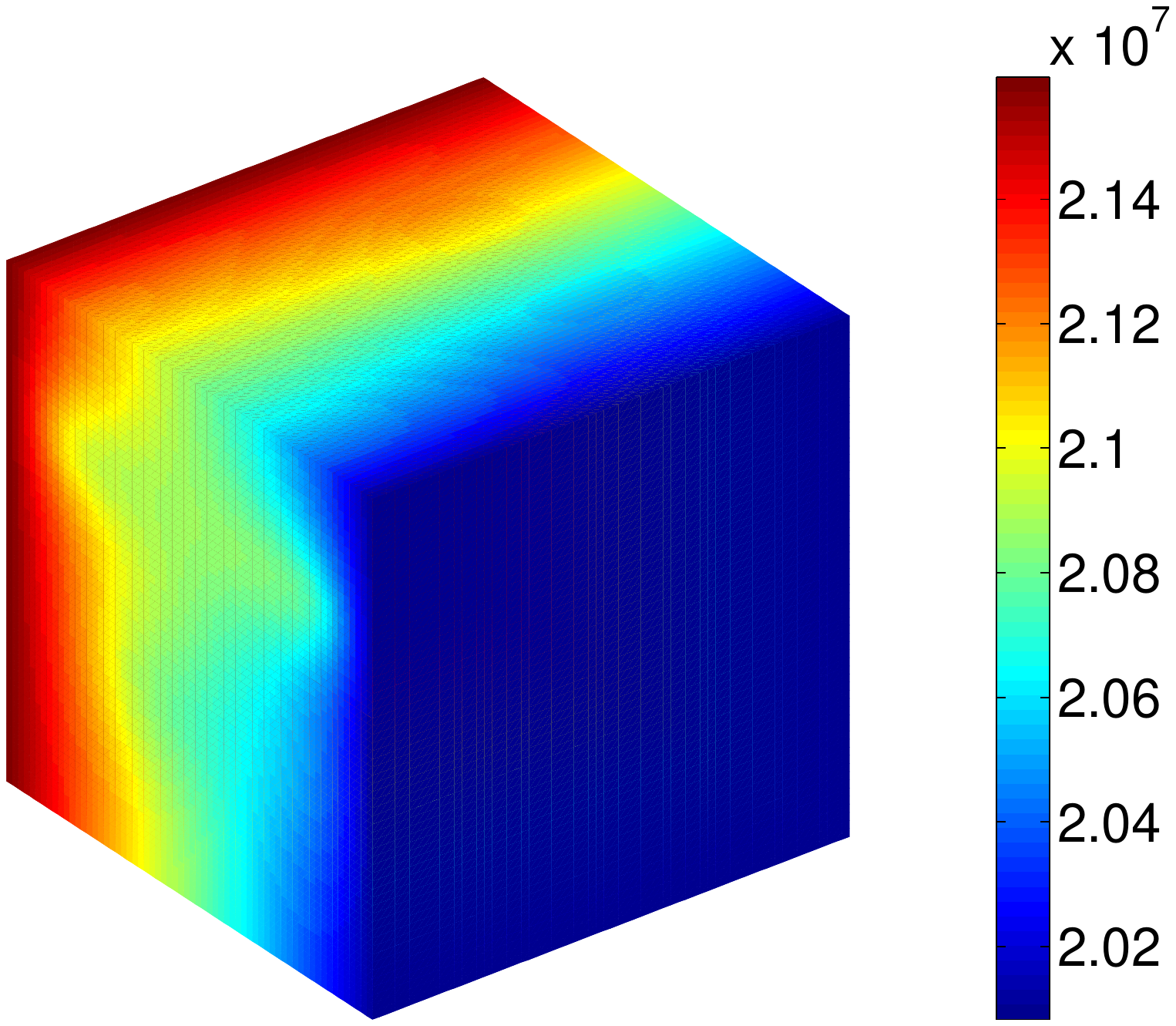}}	
	\subfigure[GMsFEM solution with 5+1 bases at Day 105]{\includegraphics[trim={1cm 6cm 1cm 6cm},clip,width=3in]{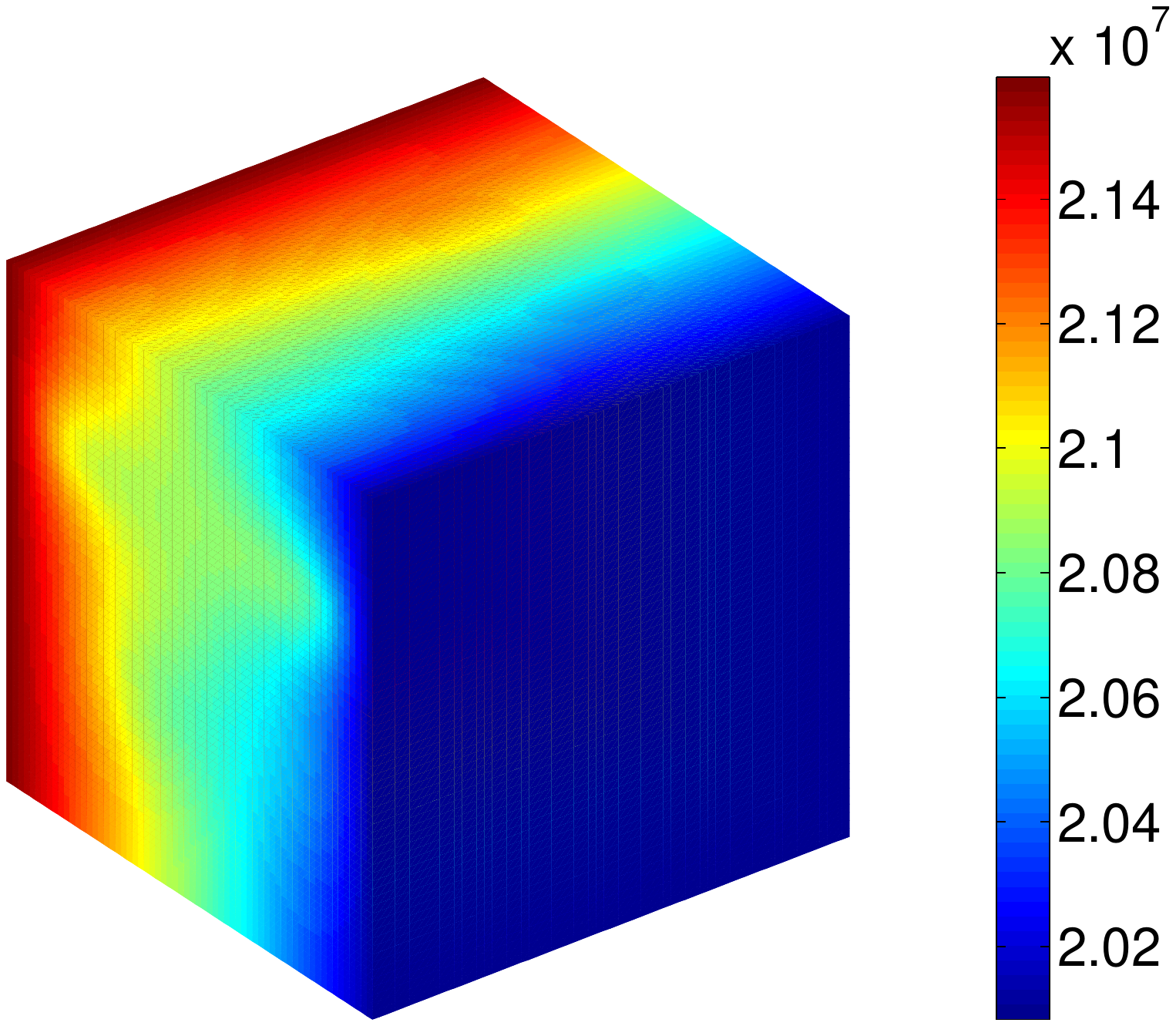}}
	
	\caption{Fine-scale reference solution and GMsFEM solution with 5+1 bases at different time instants, $K_2$, mixed boundary condition.}
	\label{fig:k2_mixedbd}
\end{figure}

\begin{figure}[!htb]
	\centering
	\subfigure[Reference solution at Day 35]{\includegraphics[trim={1cm 6cm 1cm 6cm},clip,width=3in]{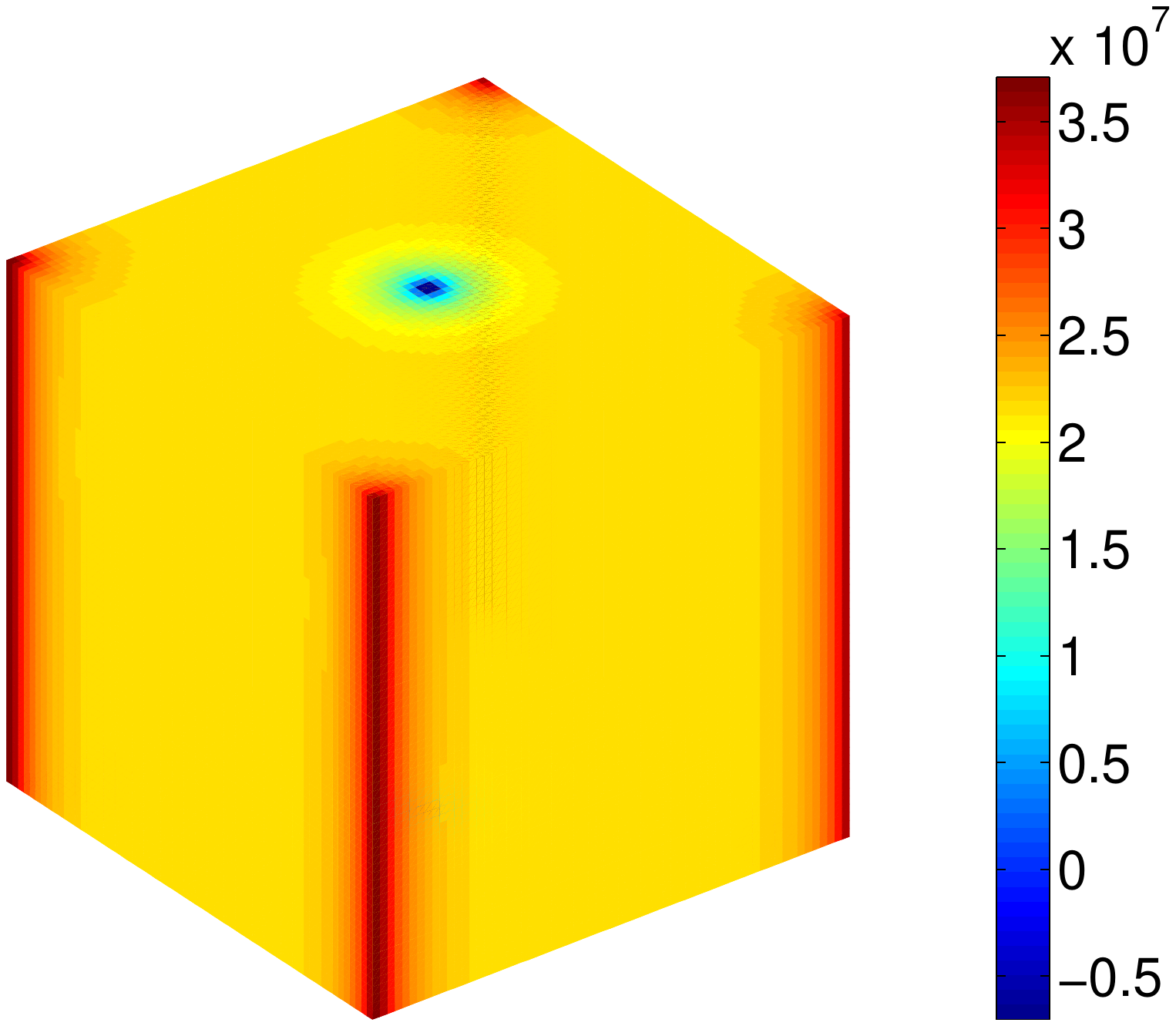}}
		\subfigure[GMsFEM solution with 5+1 bases at Day 35]{\includegraphics[trim={1cm 6cm 1cm 6cm},clip,width=3in]{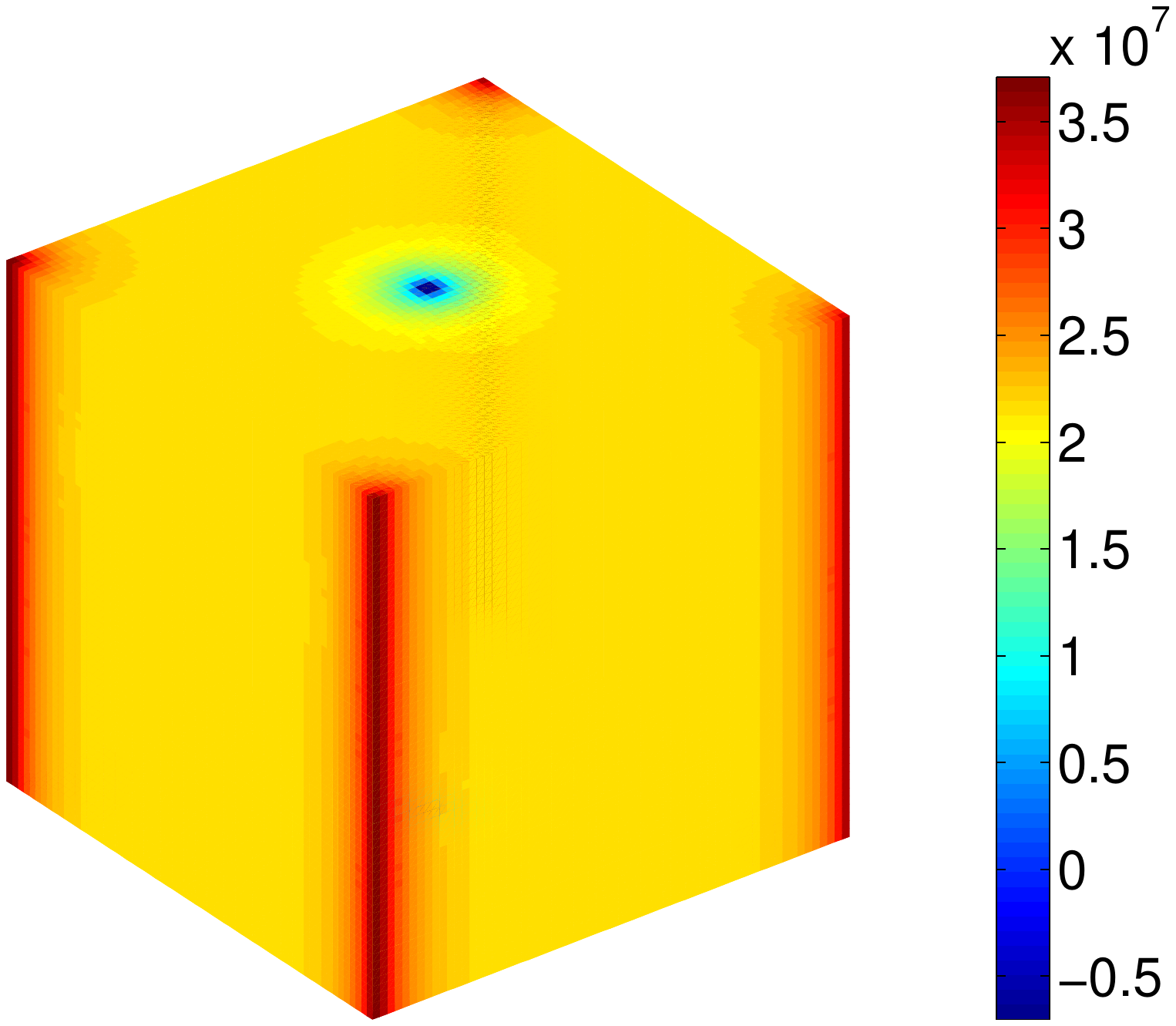}}
	\subfigure[Reference solution at Day 105]{\includegraphics[trim={1cm 6cm 1cm 6cm},clip,width=3in]{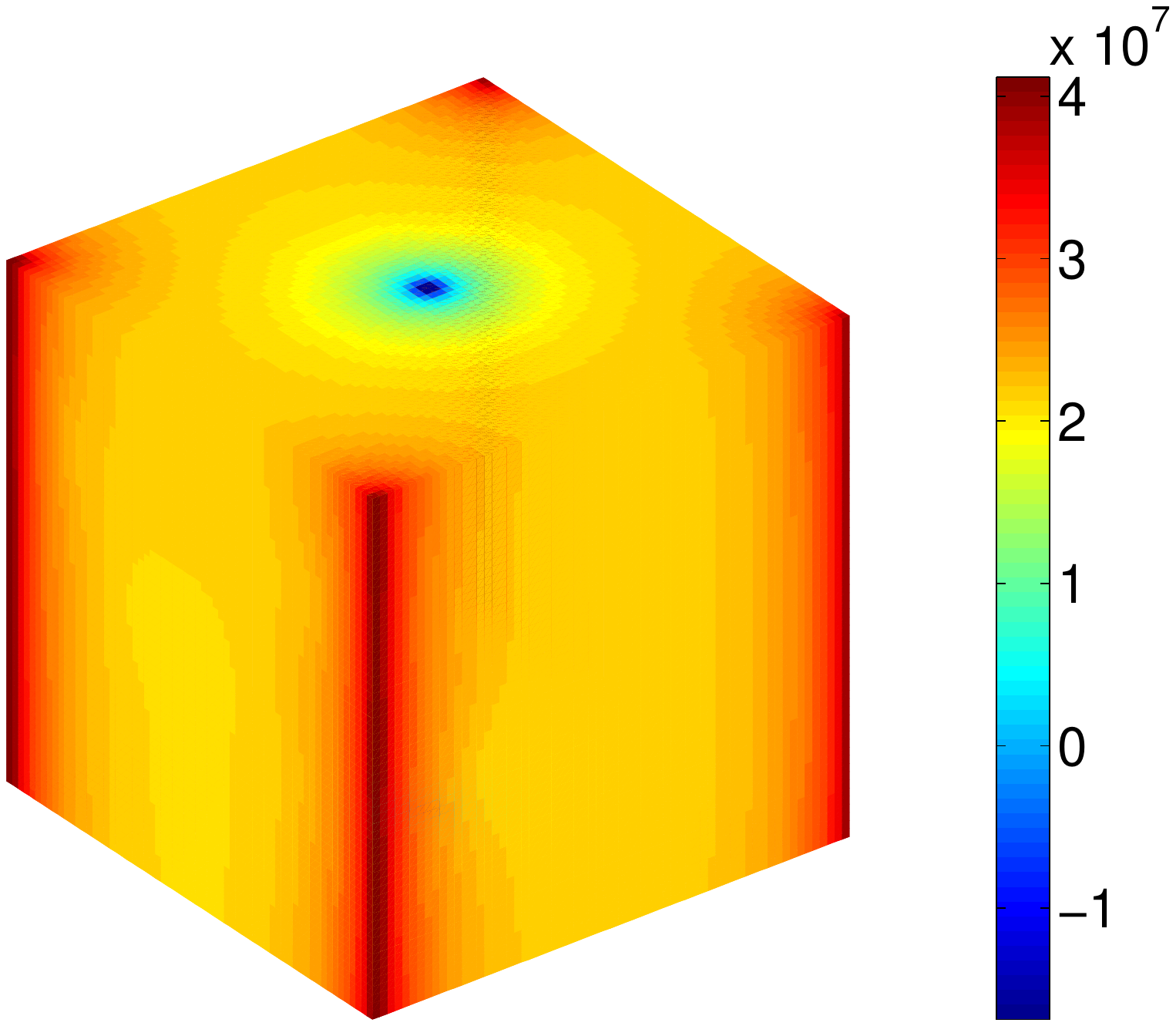}}	
	\subfigure[GMsFEM solution with 5+1 bases at Day 105]{\includegraphics[trim={1cm 6cm 1cm 6cm},clip,width=3in]{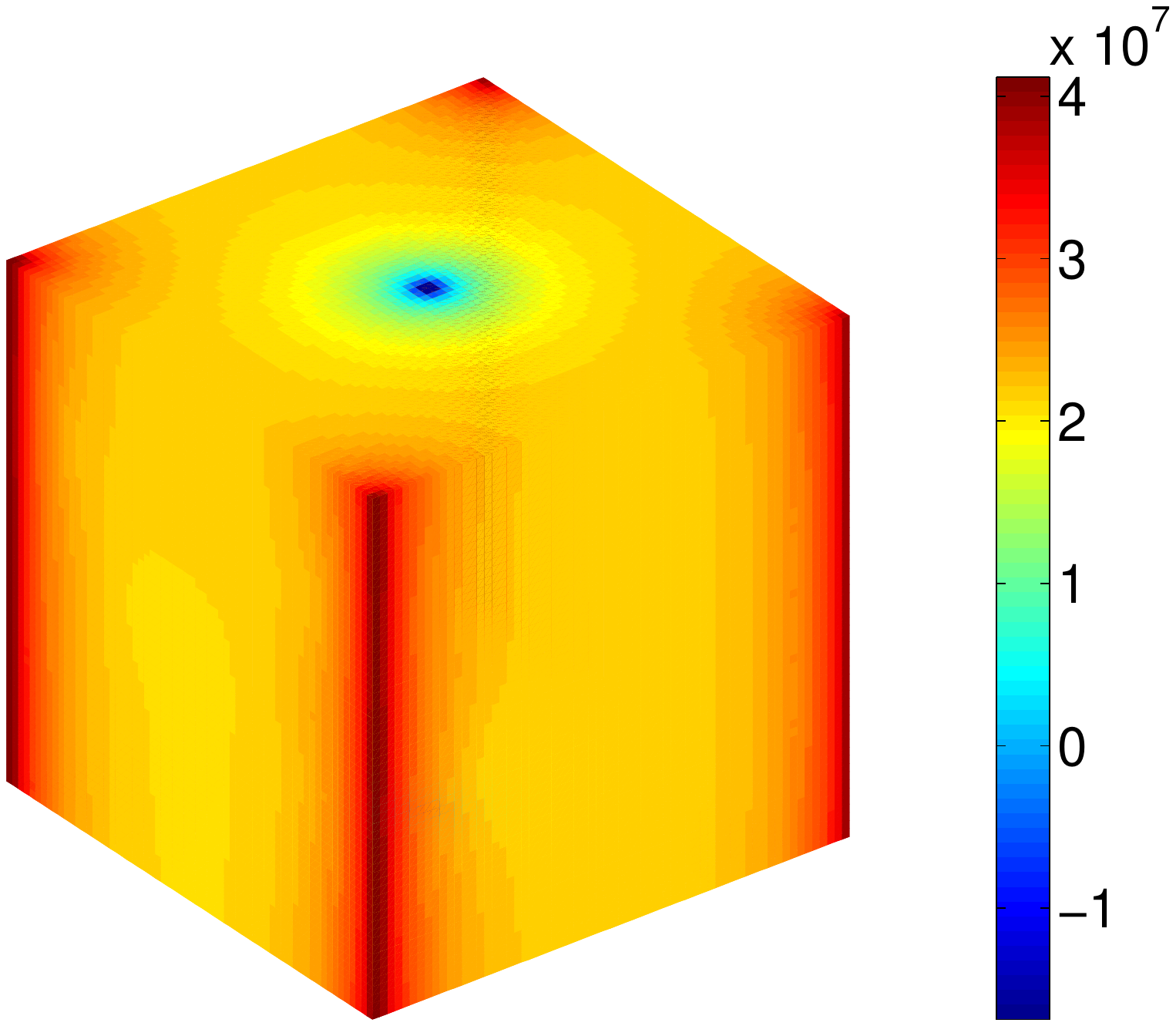}}		
	\caption{Fine-scale reference solution and GMsFEM solution with 5+1 bases (3 updates) at different time instants, $K_2$, full zero Neumann boundary condition.}
	\label{fig:k2_neu}
\end{figure}

\begin{figure}[!htb]
	\centering
	\subfigure[Reference solution at Day 5]{\includegraphics[trim={1cm 6cm 1cm 6cm},clip,width=3in]{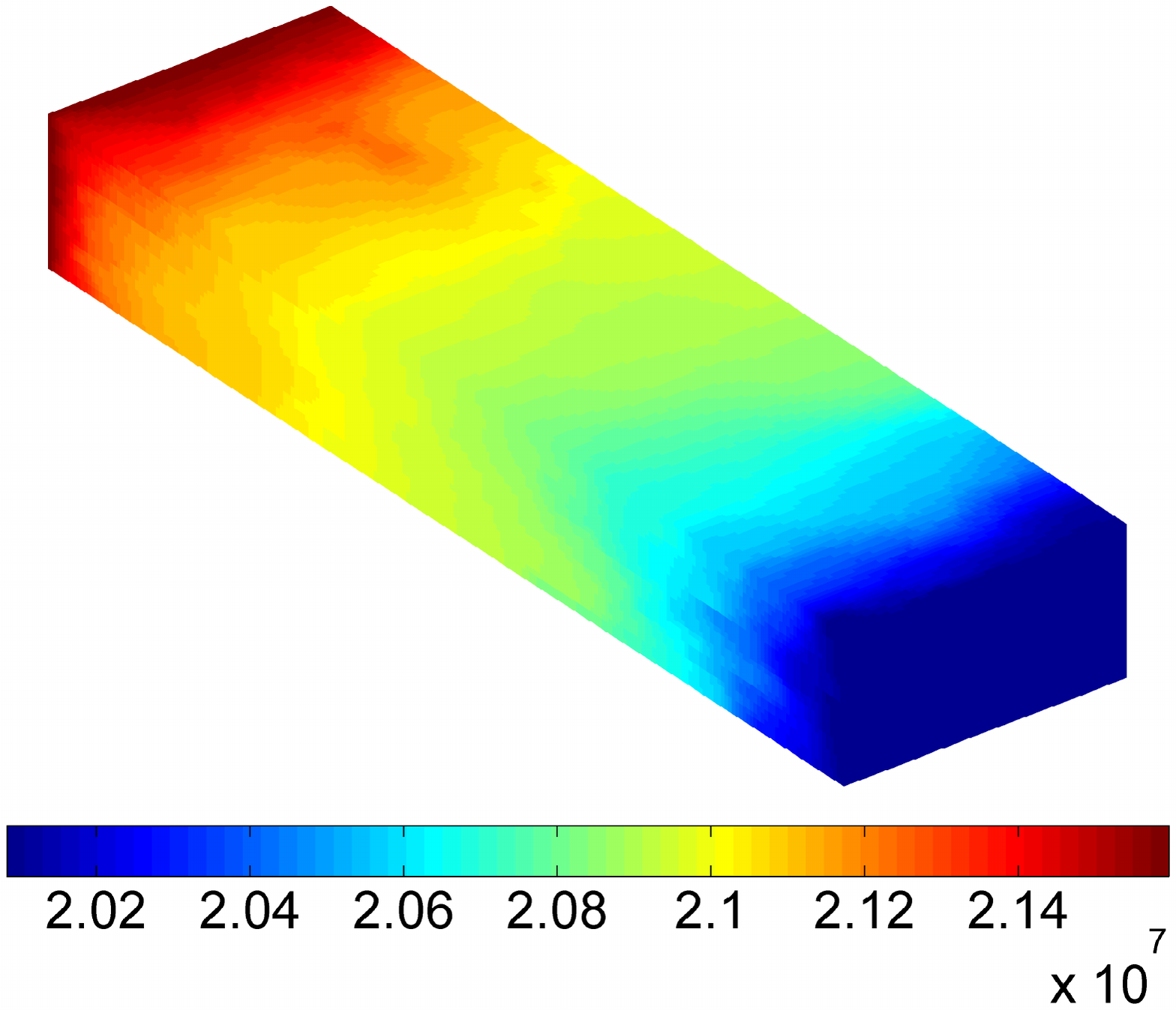}}
	\subfigure[GMsFEM solution with 5+1 bases at Day 5]{\includegraphics[trim={1cm 6cm 1cm 6cm},clip,width=3in]{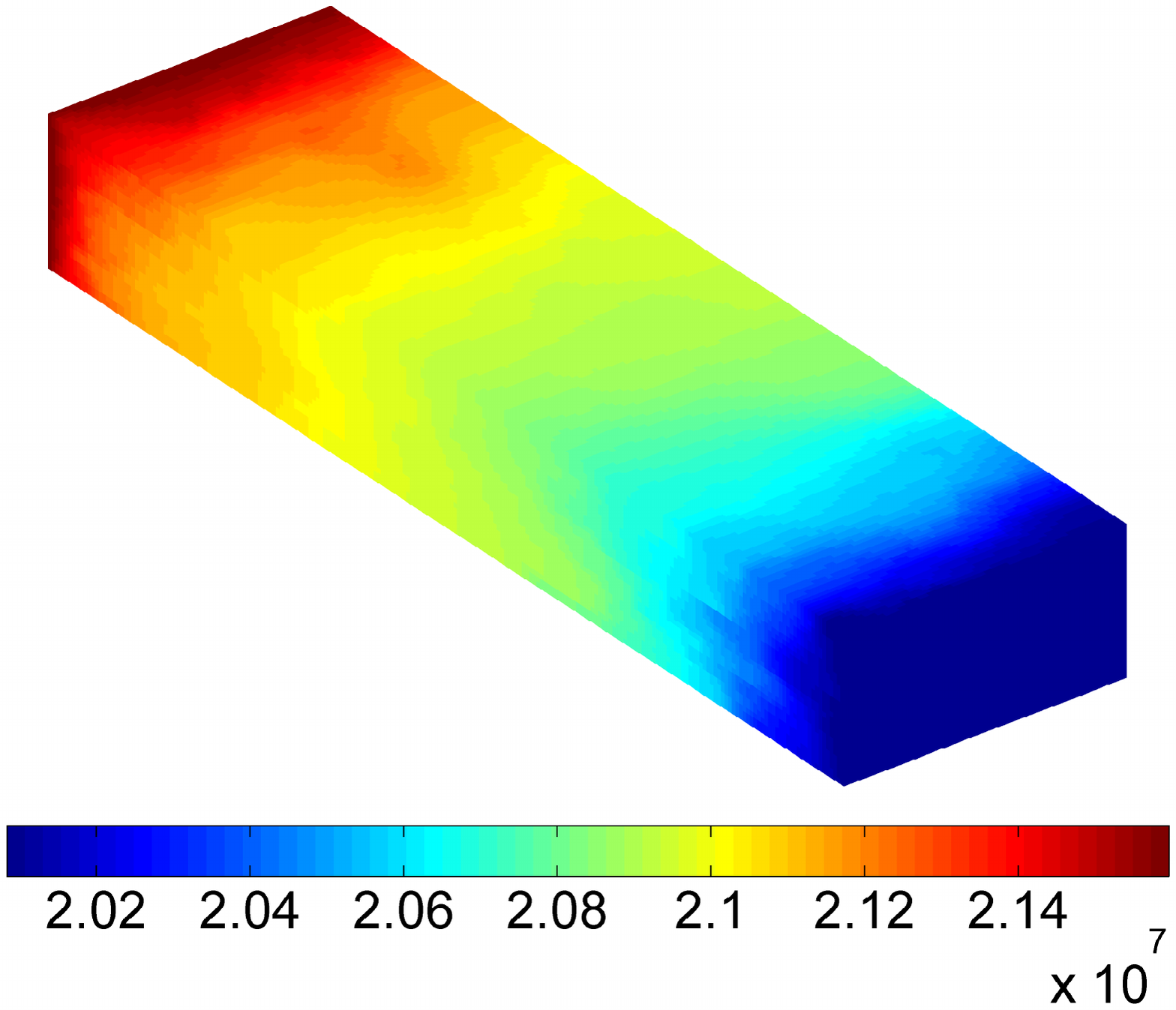}}	
	\subfigure[Reference solution at Day 15]{\includegraphics[trim={1cm 6cm 1cm 6cm},clip,width=3in]{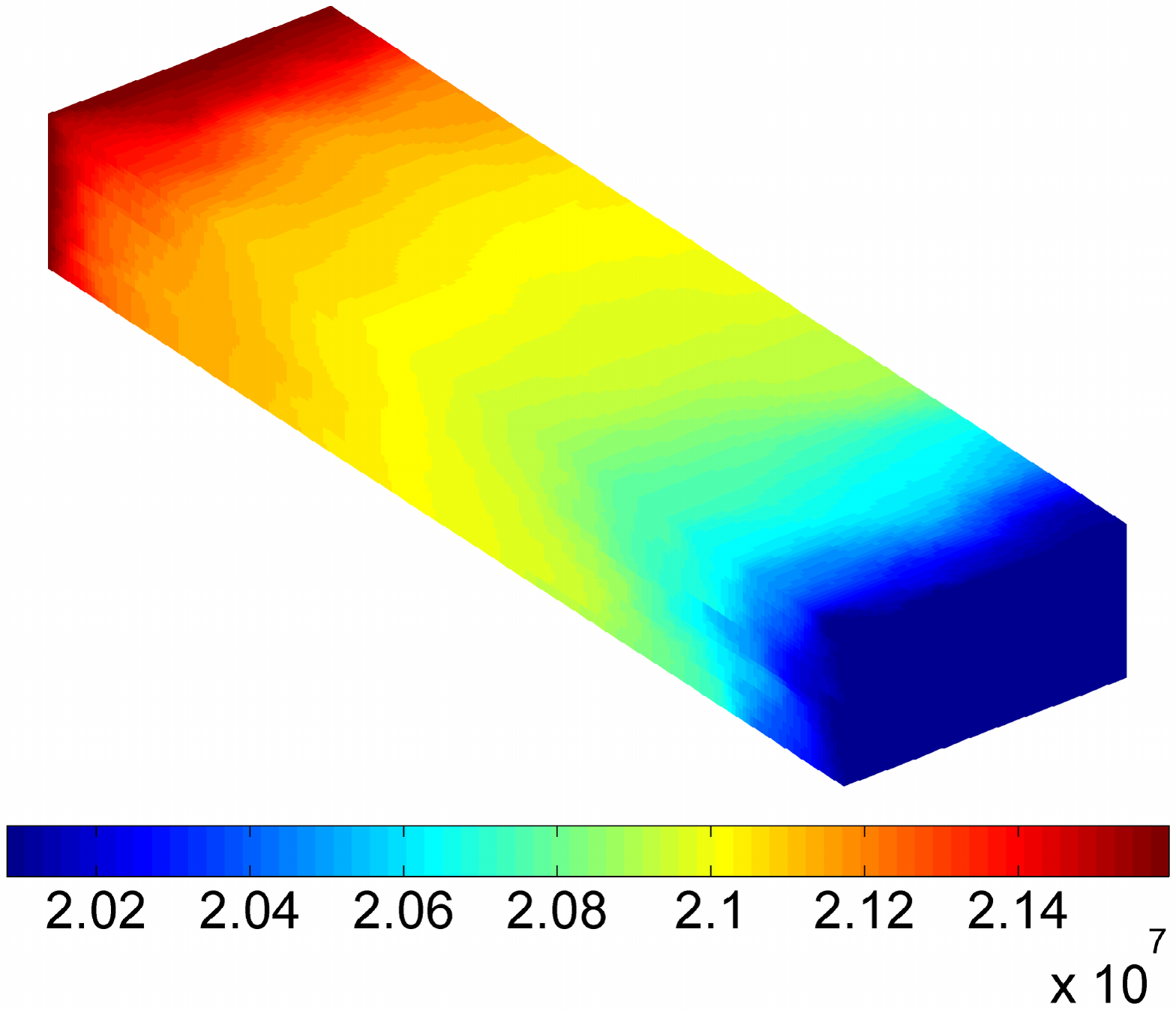}	}
	\subfigure[GMsFEM solution with 5+1 bases at Day 15]{\includegraphics[trim={1cm 6cm 1cm 6cm},clip,width=3in]{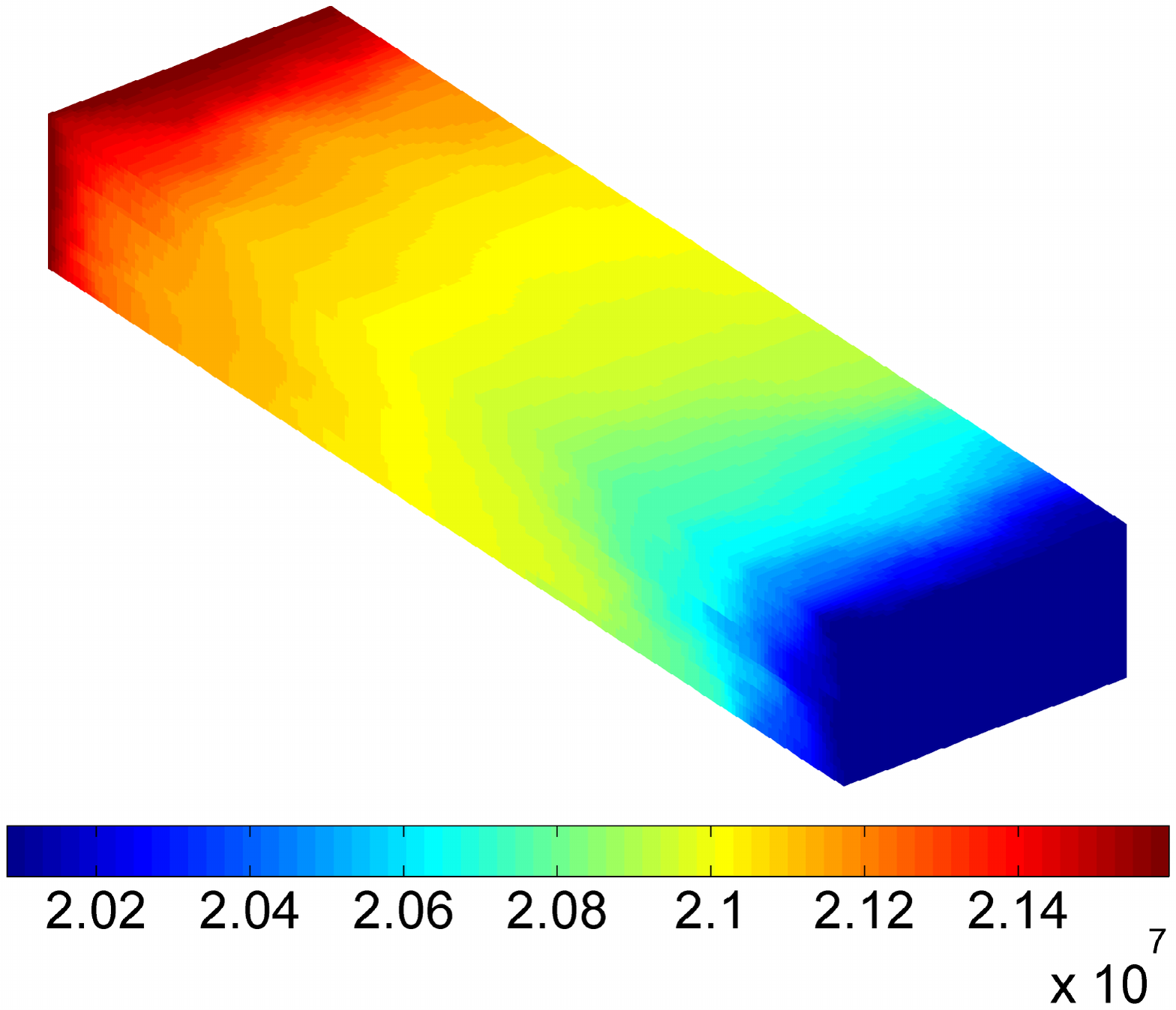}}		
	
	\caption{Fine-scale reference solution and GMsFEM solution with 5+1 bases (3 updates) at different time instants, $K_3$, mixed boundary condition.}
	\label{fig:spe3d_mixedbd}
\end{figure}

\begin{figure}[!htb]
	\centering
		\subfigure[Reference solution at Day 5]{\includegraphics[trim={1cm 6cm 1cm 6cm},clip,width=3in]{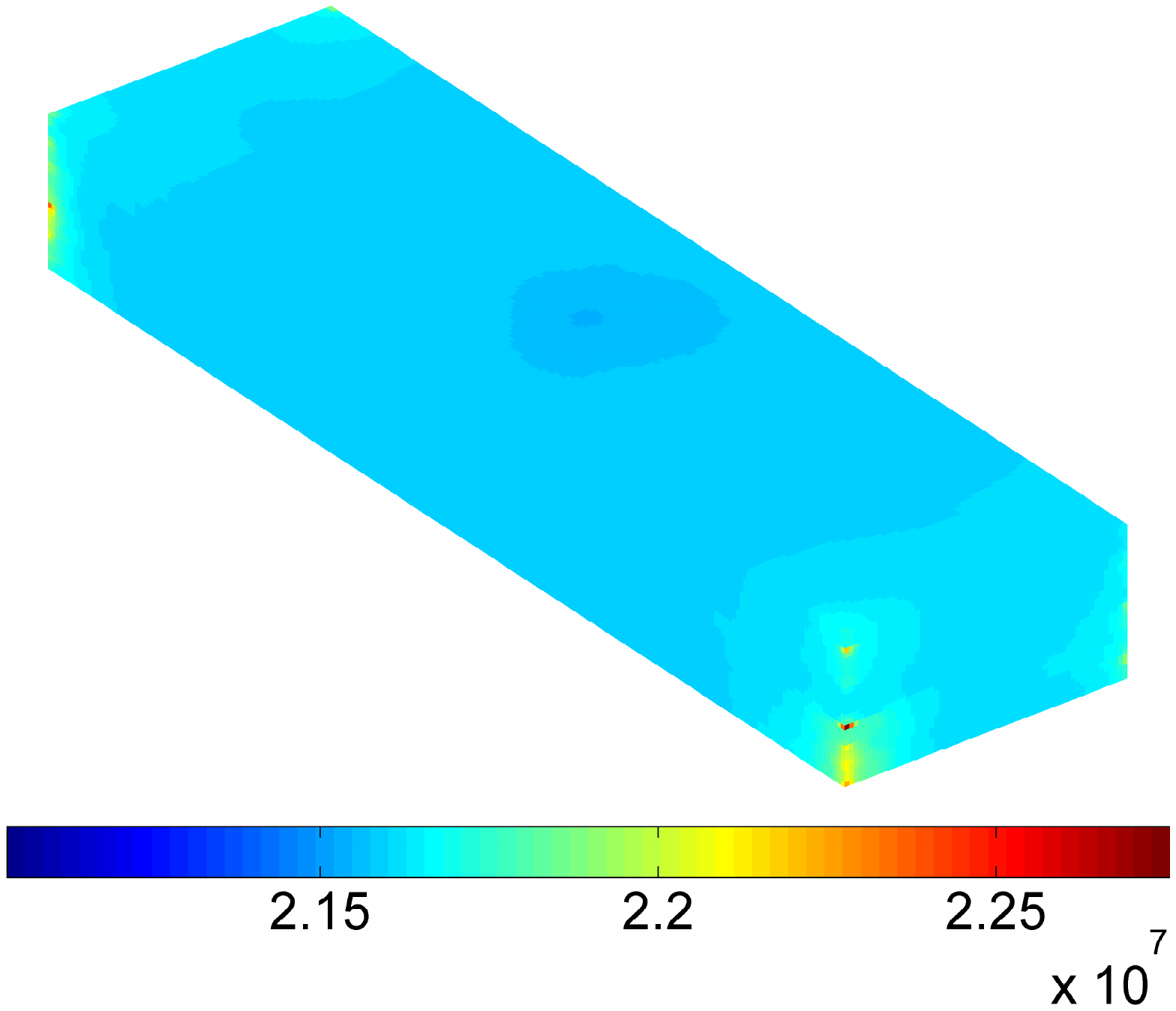}}
	\subfigure[GMsFEM solution with 5+1 bases at Day 5]{\includegraphics[trim={1cm 6cm 1cm 6cm},clip,width=3in]{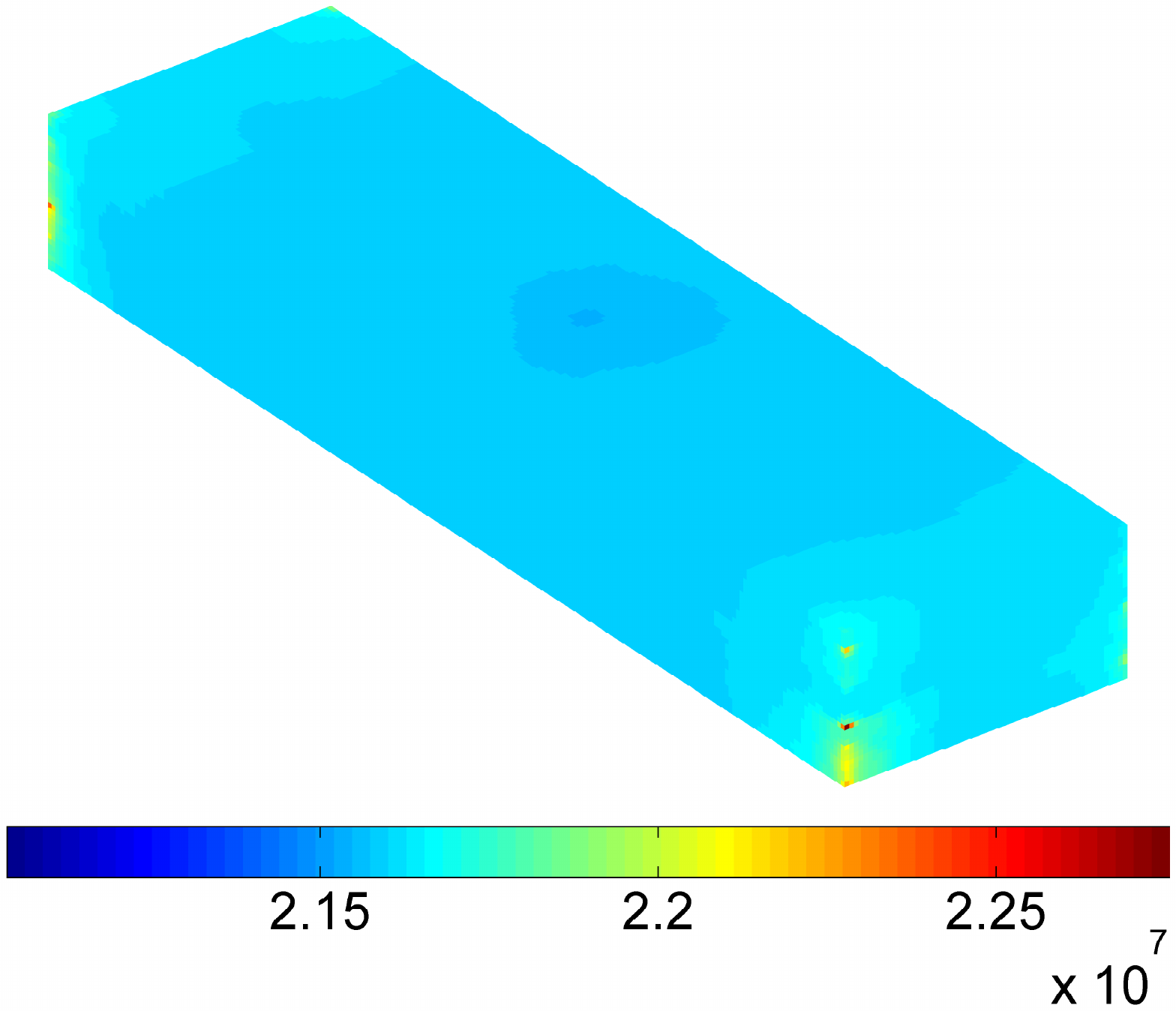}}		
		\subfigure[Reference solution at Day 15]{\includegraphics[trim={1cm 6cm 1cm 6cm},clip,width=3in]{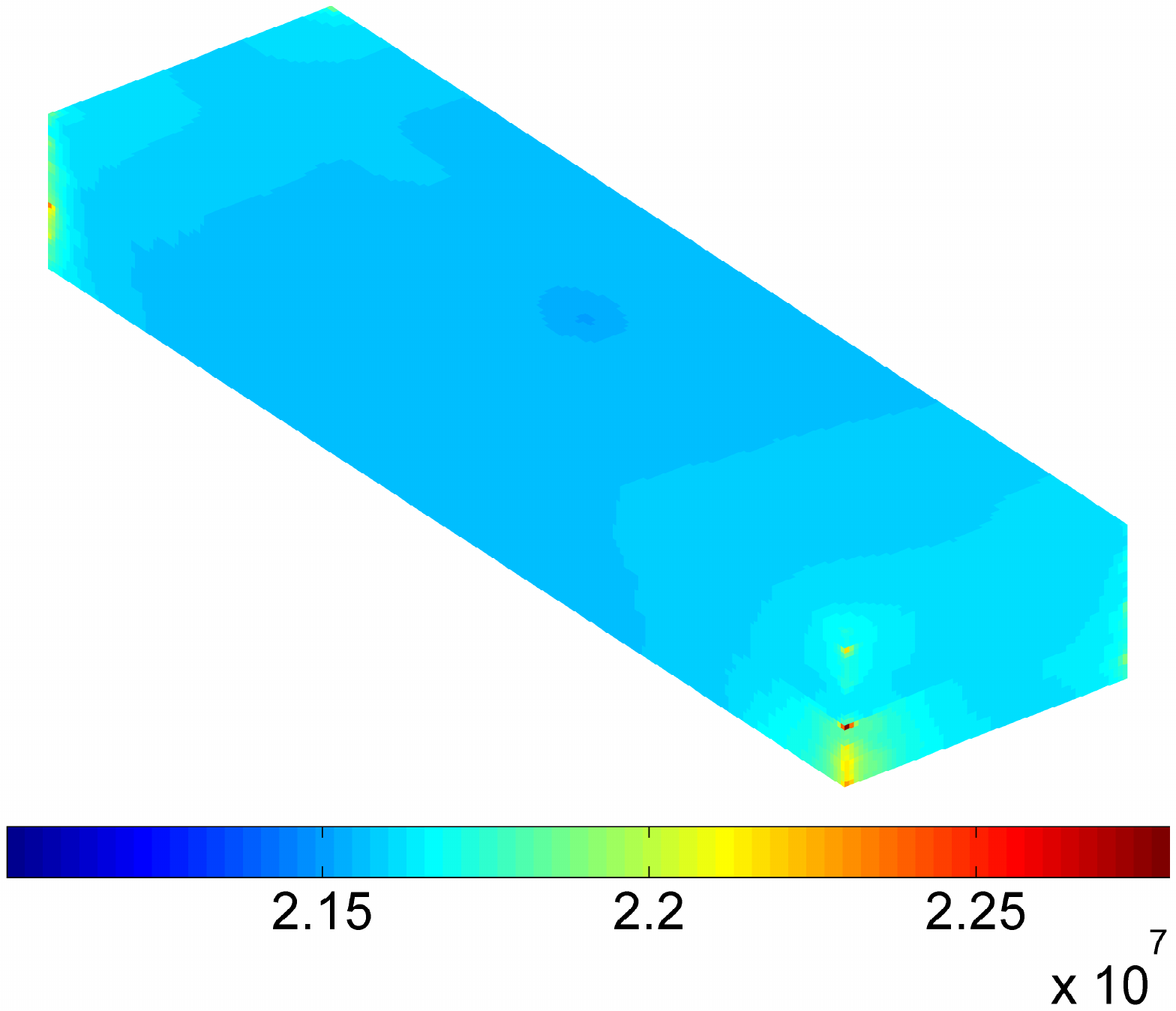}	}
	\subfigure[GMsFEM solution with 5+1 bases at Day 15]{\includegraphics[trim={1cm 6cm 1cm 6cm},clip,width=3in]{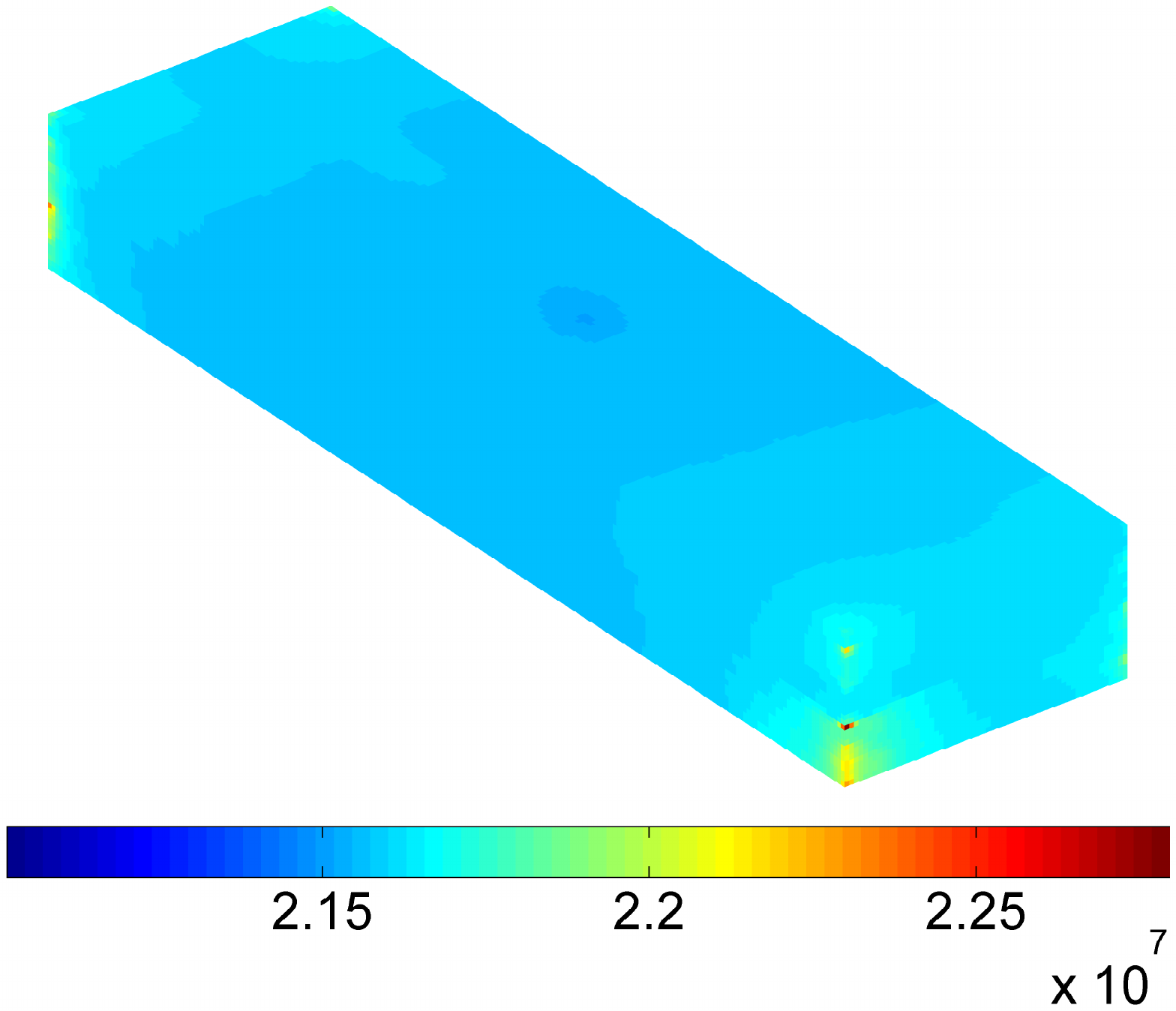}}	
	\caption{Fine-scale reference solution and GMsFEM solution with 5+1 bases (3 updates) at different time instants, $K_3$, full zero Neumann boundary condition.}
	\label{fig:spe3d_neubd}
\end{figure}

\begin{table}
	\centering \begin{tabular}{|c|c|c|c|c|c|c|c|}\hline
		Nb& Dim&$T_\text{basis}$&$T_\text{ass}$&$T_\text{solve}$& $e_{L^2}$& $e_{H^1}$  \tabularnewline\hline
		/&274,625&/& 123.8 &2406.7&/&/    \tabularnewline\hline
		4+0 & 2916 & 10.0 & 122.5 & 64.8 & 3.09e-04 & 1.37e-01     \tabularnewline\hline
		6+0 & 4374 & 12.2 & 123.5 & 121.3 & 2.05e-04 & 1.06e-01   \tabularnewline\hline
		8+0 & 5832 & 14.6 & 124.2 & 194.1 & 1.52e-04 & 8.61e-02\tabularnewline\hline
		2+1 & 2187 & 12.5 & 125.2 & 43.7 & 4.25e-04 & 1.65e-01   \tabularnewline\hline
		3+1 & 2916 & 14.1 & 124.8 & 68.5 & 3.67e-04 & 1.54e-01    \tabularnewline\hline
		3+2 & 3645 & 20.6 & 112.8 & 100.4 & 2.49e-04 & 1.10e-01\tabularnewline\hline
		4+1 & 3645 & 16.3 & 125.7 & 98.0 & 2.28e-04 & 1.07e-01    \tabularnewline\hline	
		4+2 & 4374 & 22.6 & 125.2 & 129.6 & 1.92e-04 & 9.23e-02    \tabularnewline\hline
		5+1 & 4374 & 18.4 & 125.4 & 130.6 & 1.96e-04 & 9.61e-02     \tabularnewline\hline
		4+1(1 update) & 3645 & 21.6 & 127.0 & 98.5 & 1.60e-04 & 7.19e-02   \tabularnewline\hline		
		4+1(3 updates) & 3645 & 32.4 & 125.3 & 98.0 & 1.11e-04 & 4.41e-02  \tabularnewline\hline		
		5+1(1 update) & 4374 & 24.6 & 126.1 & 129.9 & 1.36e-04 & 6.46e-02   \tabularnewline\hline
		5+1(3 updates)  & 4374 & 37.8 & 126.1 & 130.1 & 9.30e-05 & 3.95e-02    \tabularnewline\hline

	\end{tabular}
	\caption{Computational performance comparison between the reference solution and GMsFEM solution, $K_1$, mixed boundary condition.}
	\label{ta:k1m}
\end{table}

\begin{table}
	\centering \begin{tabular}{|c|c|c|c|c|c|c|c|}\hline
		Nb& Dim&$T_\text{basis}$&$T_\text{ass}$&$T_\text{solve}$& $e_{L^2}$& $e_{H^1}$  \tabularnewline\hline
	/&274,625&/&  259.3&4898.5&/&/    \tabularnewline\hline		
4+0 & 2916 & 10.5 & 203.1 & 100.5 & 7.62e-03 & 3.68e-01   \tabularnewline\hline	
6+0 & 4374 & 12.3 & 203.2 & 195.8 & 6.22e-03 & 3.28e-01    \tabularnewline\hline
8+0 & 5832 & 15.0 & 212.7 & 314.9 & 5.45e-03 & 3.04e-01    \tabularnewline\hline
 2+1 & 2187 & 11.7 & 240.0 & 81.7 & 3.99e-03 & 1.57e-01    \tabularnewline\hline
 2+2 & 2916 & 16.2 & 240.5 & 145.2 & 3.34e-03 & 1.20e-01\tabularnewline\hline
3+1 & 2916 & 13.5 & 239.8 & 128.1 & 2.78e-03 & 1.16e-01     \tabularnewline\hline
 3+2 & 3645 & 18.8 & 245.6 & 196.5 & 2.43e-03 & 9.38e-02     \tabularnewline\hline
4+2 & 4374 & 21.8 & 240.3 & 257.6 & 1.83e-03 & 7.54e-02    \tabularnewline\hline
4+1 &  3645 & 15.7 & 246.2 & 185.0 & 2.08e-03 & 9.52e-02   \tabularnewline\hline					
 5+1 & 4374 & 17.8 & 241.7 & 247.1 & 1.33e-03 & 7.03e-02   \tabularnewline\hline
 4+1(1 update)  & 3645 & 20.6 & 222.2 & 170.2 & 1.34e-03 & 6.71e-02\tabularnewline\hline
    4+1 (3 updates)& 3645 & 30.6 & 229.6 & 176.3 & 7.86e-04 & 4.32e-02		\tabularnewline\hline
  5+1(1 update) & 4374 & 23.8 & 221.7 & 225.9 & 8.90e-04 & 5.07e-02	\tabularnewline\hline
  5+1 (3 updates)& 4374 & 36.5 & 233.0 & 239.2 & 5.50e-04 & 3.36e-02	\tabularnewline\hline
	\end{tabular}
	\caption{Computational performance comparison between the reference solution and GMsFEM solution, $K_1$, full zero Neumann boundary condition.}
	\label{ta:k1n}
\end{table}

\begin{table}
	\centering \begin{tabular}{|c|c|c|c|c|c|c|c|}\hline
		Nb& Dim&$T_\text{basis}$&$T_\text{ass}$&$T_\text{solve}$& $e_{L^2}$& $e_{H^1}$  \tabularnewline\hline
		/&274,625&/& 112.0 &2429.0&/&/    \tabularnewline\hline	
		4+0 & 2916 & 10.2 & 121.5 & 65.2 & 1.97e-04 & 1.11e-01   \tabularnewline\hline		
		6+0 & 4374 & 12.2 & 123.3 & 120.7 & 8.65e-05 & 6.12e-02	\tabularnewline\hline
		8+0 & 5832 & 14.2 & 121.0 & 190.8 & 5.98e-05 & 4.83e-02\tabularnewline\hline
		2+1 & 2187 & 12.0 & 123.6 & 40.8 & 8.60e-04 & 2.90e-01     \tabularnewline\hline
		3+1 & 2916 & 13.6 & 123.1 & 65.4 & 1.86e-04 & 1.06e-01  \tabularnewline\hline
		3+2 & 3645 & 19.3 & 123.2 & 100.0 & 1.31e-04 & 7.30e-02    \tabularnewline\hline
		4+2 & 4374 & 22.4 & 121.9 & 127.7 & 8.27e-05 & 5.39e-02   \tabularnewline\hline	
		5+1 & 4374 & 17.7 & 124.0 & 128.4 & 7.35e-05 & 5.29e-02    \tabularnewline\hline
		4+1 & 3645 & 15.7 & 123.5 & 95.9 & 1.04e-04 & 6.88e-02   \tabularnewline\hline
		4+1(1 update) & 3645 & 21.4 & 123.2 & 96.6 & 7.82e-05 & 4.93e-02    \tabularnewline\hline
		4+1(3 updates) & 3645 & 32.2 & 123.9 & 97.2 & 5.67e-05 & 3.21e-02   \tabularnewline\hline
		5+1(1 update) & 4374 & 24.4 & 123.5 & 127.9 & 5.51e-05 & 3.76e-02    \tabularnewline\hline
		5+1(3 updates) & 4374 & 37.2 & 125.2 & 128.5 & 4.12e-05 & 2.44e-02   \tabularnewline\hline		
	\end{tabular}
	\caption{Computational performance comparison between the reference solution and GMsFEM solution, $K_2$, mixed boundary condition.}
	\label{ta:k2m}
\end{table}

\begin{table}
	\centering \begin{tabular}{|c|c|c|c|c|c|c|c|}\hline
		Nb& Dim&$T_\text{basis}$&$T_\text{ass}$&$T_\text{solve}$& $e_{L^2}$& $e_{H^1}$  \tabularnewline\hline
		/&274,625&/& 287.5 &5610.9&/&/    \tabularnewline\hline
		4+0 & 2916 & 9.8 & 222.5 & 110.2 & 1.81e-02 & 5.46e-01     \tabularnewline\hline
		6+0 & 4374 & 12.0 & 241.2 & 233.6 & 1.39e-02 & 4.81e-01    \tabularnewline\hline
		8+0 & 5832 & 13.9 & 241.6 & 372.7 & 1.18e-02 & 4.39e-01    \tabularnewline\hline
		2+1 & 2187 & 11.5 & 278.7 & 88.1 & 1.97e-02 & 2.50e-01   \tabularnewline\hline	
		3+1 & 2916 & 13.4 & 277.6 & 148.6 & 6.83e-03 & 1.80e-01    \tabularnewline\hline
		3+2 & 3645 & 18.6 & 280.8 & 228.5 & 4.42e-03 & 9.28e-02\tabularnewline\hline
		4+1 & 3645 & 15.7 & 279.4 & 214.6 & 5.16e-03 & 1.59e-01    \tabularnewline\hline
		4+2 & 4374 & 21.6 & 278.0 & 297.7 & 2.80e-03 & 7.26e-02 \tabularnewline\hline
		5+1  & 4374 & 18.0 & 288.0 & 292.0 & 4.04e-03 & 1.40e-01  \tabularnewline\hline
		4+1(1 update)& 3645 & 21.1 & 258.0 & 196.8 & 3.71e-03 & 1.17e-01    \tabularnewline\hline
		4+1(3 updates) & 3645 & 30.5 & 255.2 & 195.5 & 2.26e-03 & 7.59e-02     \tabularnewline\hline		
		5+1(1 update)& 4374 & 23.3 & 260.1 & 265.7 & 2.91e-03 & 1.04e-01    \tabularnewline\hline
		5+1(3 updates) & 4374 & 35.7 & 262.3 & 266.0 & 1.76e-03 & 6.70e-02    \tabularnewline\hline		
	\end{tabular}
	\caption{Computational performance comparison between the reference solution and GMsFEM solution, $K_2$, full zero Neumann boundary condition.}
	\label{ta:k2n}
\end{table}

\begin{table}
	\centering \begin{tabular}{|c|c|c|c|c|c|c|c|}\hline
		Nb& Dim&$T_\text{basis}$&$T_\text{ass}$&$T_\text{solve}$& $e_{L^2}$& $e_{H^1}$  \tabularnewline\hline
		/&417,911&/& 246.3& 3264.5&/&/    \tabularnewline\hline
		4+0 & 2576 & 19.2 & 185.1 & 92.4 & 2.89e-04 & 2.32e-01	 \tabularnewline\hline	
		6+0 & 3864 & 23.5 & 194.0 & 172.7 & 2.03e-04 & 1.54e-01  \tabularnewline\hline
		8+0 & 5152 & 26.6 & 194.7 & 271.8 & 1.53e-04 & 1.16e-01   \tabularnewline\hline
		2+1 & 1932 & 23.3 & 197.1 & 64.2 & 2.53e-04 & 1.16e-01    \tabularnewline\hline	
		3+1 & 2576 & 26.6 & 194.4 & 97.8 & 1.86e-04 & 9.33e-02     \tabularnewline\hline
		3+2 & 3220 & 35.3 & 187.5 & 140.0 & 1.45e-04 & 5.55e-02   \tabularnewline\hline
		4+1 & 3220 & 28.9 & 197.7 & 139.3 & 1.48e-04 & 7.67e-02    \tabularnewline\hline
		4+2 & 3864 & 40.3 & 199.4 & 189.6 & 1.25e-04 & 4.62e-02    \tabularnewline\hline	
		5+1 & 3864 & 33.0 & 195.1 & 183.8 & 1.14e-04 & 6.42e-02   \tabularnewline\hline			
		4+1(1 update) & 3220 & 56.6 & 194.1 & 138.5 & 9.75e-05 & 3.83e-02    \tabularnewline\hline
		4+1(3 updates)& 3220 & 38.3 & 198.8 & 138.5 & 1.10e-04 & 5.62e-02    \tabularnewline\hline	
		5+1(1 update)& 3864 & 43.5 & 195.6 & 181.1 & 8.72e-05 & 4.72e-02   \tabularnewline\hline
		5+1(3 updates) & 3864 & 63.9 & 203.2 & 190.0 & 7.68e-05 & 3.30e-02    \tabularnewline\hline
	\end{tabular}
	\caption{Computational performance comparison between the reference solution and GMsFEM solution, $K_3$, mixed boundary condition.}
	\label{ta:k3m}
\end{table}

\begin{table}
	\centering \begin{tabular}{|c|c|c|c|c|c|c|c|}\hline
		Nb& Dim&$T_\text{basis}$&$T_\text{ass}$&$T_\text{solve}$& $e_{L^2}$& $e_{H^1}$  \tabularnewline\hline
		/&417,911&/&258.7& 3515.4&/&/    \tabularnewline\hline
		4+0 & 2576 & 19.3 & 195.2 & 90.6 & 1.94e-04 & 5.95e-01   \tabularnewline\hline
		6+0 & 3864 & 23.0 & 199.9 & 175.2 & 1.72e-04 & 5.30e-01    \tabularnewline\hline
		8+0 & 5152 & 27.9 & 194.8 & 278.9 & 1.65e-04 & 5.03e-01   \tabularnewline\hline
		3+1 & 2576 & 25.0 & 242.1 & 115.0 & 4.67e-05 & 1.22e-01   \tabularnewline\hline
		3+2 & 3220 & 33.4 & 254.3 & 177.5 & 4.00e-05 & 7.72e-02   \tabularnewline\hline	
		4+1 & 3220 & 28.3 & 243.0 & 169.0 & 3.18e-05 & 1.02e-01    \tabularnewline\hline
		4+2 & 3864 & 38.0 & 246.2 & 229.4 & 2.80e-05 & 6.27e-02    \tabularnewline\hline
		5+1 & 3864 & 32.0 & 248.7 & 225.2 & 2.34e-05 & 8.27e-02   \tabularnewline\hline
		4+1(1 update) & 3220 & 38.6 & 209.5 & 148.8 & 2.13e-05 & 6.97e-02   \tabularnewline\hline
		4+1(3 updates)& 3220 & 54.3 & 210.2 & 146.5 & 1.38e-05 & 4.30e-02   \tabularnewline\hline
		5+1(1 update) & 3864 & 42.0 & 218.8 & 197.9 & 1.53e-05 & 5.59e-02    \tabularnewline\hline
		5+1(3 updates) & 3864 & 64.5 & 214.5 & 194.1 & 9.58e-06 & 3.43e-02   \tabularnewline\hline				
	\end{tabular}
	\caption{Computational performance comparison between the reference solution and GMsFEM solution, $K_3$, full zero Neumann boundary condition.}
	\label{ta:k3n}
\end{table}

\section{Conclusion} \label{sec:conclusion}
We study the generalized multiscale finite element method for the highly heterogeneous
nonlinear single phase compressible flow.  We include the major ingredients of the
offline GMsFEM and adopt the residual driven GMsFEM. A comprehensive analysis is provided, which can guide future study. More specifically, the convergence error analysis for the semi-discrete scheme based on two types of snapshot spaces is performed, and the a posteriori error estimator is derived under the underlining discretization. Three representative 3D examples are offered to verify the efficiency and accuracy of the GMsFEM. Highly accurate solution with huge computational savings can be obtained. The results indicate that our proposed algorithm for the nonlinear single phase compressible flow is highly competitive among all the developed methods and could be a good candidate for practical applications.

\section*{Acknowledgment}

The research of Eric Chung is partially supported by the Hong Kong RGC General Research Fund (Project numbers 14304719 and 14302018).

\clearpage

	\bibliographystyle{plain}
\bibliography{reference}
\end{document}